\theoremstyle{plain}%
\newtheorem{theorem}{Theorem}[section]
\newtheorem{lemma}[theorem]{Lemma}
\newtheorem{definition}[theorem]{Definition}
\newtheorem{corollary}[theorem]{Corollary}
\newtheorem{proposition}[theorem]{Proposition}
\theoremstyle{remark}
\newtheorem*{rem*}{Remark}
\newtheorem{remark}[theorem]{Remark}
\newcommand{\CC}{\mathbb{C}}
\newcommand{\EE}{\mathbb{E}}
\newcommand{\PP}{\mathbb{P}}
\newcommand{\R}{\mathbb{R}}
\newcommand{\N}{\mathbb{N}}
\newcommand{\Z}{\mathbb{Z}}
\newcommand{\CalA}{\mathcal{A}}
\newcommand{\CalC}{\mathscr{C}}
\newcommand{\CalD}{\mathcal{D}}
\newcommand{\CalE}{\mathcal{E}}
\newcommand{\CalF}{\mathcal{F}}
\newcommand{\CalH}{\mathcal{H}}
\newcommand{\CalM}{\mathcal{M}}
\newcommand{\CalO}{\mathcal{O}}
\newcommand{\CalP}{\mathcal{P}}
\newcommand{\CalX}{\mathcal{X}}
\newcommand{\GeneralBarron}{\mathcal{GB}}
\newcommand{\supp}{\operatorname{supp}}
\newcommand{\Fourier}{\mathcal{F}}
\newcommand{\Indicator}{\mathds{1}}
\newcommand{\Cover}{\mathcal{G}}
\newcommand{\Packing}{\mathcal{N}}
\newcommand{\myBullet}{\bullet}
\newcommand{\NN}{\mathcal{NN}}
\newcommand{\lebesgue}{\boldsymbol{\lambda}}
\newcommand{\sign}{\operatorname{sign}}
\newcommand{\measureTheta}{\lambda}
\newcommand{\IIDsim}{\overset{iid}{\sim}}
\newcommand{\strict}{\mathrm{strict}}
\newcommand{\dist}{\operatorname{dist}}
\newcommand{\alert}[1]{\emph{#1}}
\DeclareMathOperator*{\argmin}{arg\,min}
\newcommand{\interior}[1]{\accentset{\circ}{#1}}
\newcommand{\RegClass}{\mathcal{C}\ell}
\newcommand{\RegSet}{\mathcal{R}}
\renewcommand{\emptyset}{\varnothing}
\newcommand{\eps}{\varepsilon}
\newcommand{\linspan}{\operatorname{span}}
\numberwithin{equation}{section}
\title{Optimal learning of high-dimensional classification problems using deep neural networks}
\author{%
  Philipp Petersen%
  \thanks{University of Vienna,
    Faculty of Mathematics and Research Network Data Science @ Uni Vienna, Kolingasse 14-16,
    1090 Wien,
    e-mail: \texttt{philipp.petersen@univie.ac.at}
  }
  \footnotemark[5]
  \and
  Felix Voigtlaender%
  \thanks{
    Katholische Universität Eichstätt-Ingolstadt,
    Lehrstuhl Reliable Machine Learning,
    Ostenstraße 26,
    85072 Eichstätt,
    Germany, e-mail: \texttt{felix.voigtlaender@ku.de}
  }
  \thanks{
    Department of Mathematics,
    Technical University of Munich,
    Boltzmannstr. 3,
    85748 Garching bei München,
    Germany
  }
  \thanks{
    F.\ Voigtlaender acknowledges support by the German Research Foundation (DFG)
    in the context of the Emmy Noether junior research group VO 2594/1--1.
  }
  \thanks{Both authors contributed equally to this work.}
}
\begin{document}
\maketitle
\begin{abstract}
  We study the problem of learning classification functions
  \emph{from noiseless training samples}, under the assumption
  that the decision boundary is of a certain regularity.
  We establish universal lower bounds for this estimation problem,
  for general classes of continuous decision boundaries.
  For the class of locally Barron-regular decision boundaries,
  we find that the optimal estimation rates are essentially independent
  of the underlying dimension and can be realized by empirical risk minimization methods
  over a suitable class of deep neural networks.
  These results are based on novel estimates of the $L^1$ and $L^\infty$ entropies
  of the class of Barron-regular functions. 
\end{abstract}

\noindent
\textbf{Keywords:}
Minimax bounds,
noiseless training data,
deep neural networks,
classification,
Barron class,
metric entropy

\noindent
\textbf{Mathematical Subject Classification:} 
68T05,  %
62C20,  %
41A25,  %
41A46  %

 \section{Introduction}
\label{sec:Introduction}

In this work, we consider the estimation of functions
in the context of (binary) classification problems. 
That is, we consider i.i.d.\ random variables
$(X_i,Y_i)_{i=1}^m$, where $X_i \in \R^d$ and $Y_i \in \{0,1\}$
and where the $(X_i,Y_i)$ follow an unknown joint distribution denoted by $\rho_{X,Y}$. 
The task is to infer from $(X_i,Y_i)_{i=1}^m$ a classifier $g \colon \R^d \to \{0,1\}$
such that the \emph{misclassification error} $\mathbb{P}( g(X) \neq Y)$ is as small as possible.
It can be shown, see \cite[Page 2]{devroye2013probabilistic},
that there exists an optimal classifier $g_*$, i.e.,
\[
  g_* = \argmin_{g\colon \R^d \to \{0,1\} \text{ measurable}}
          \mathbb{P} \bigl( g(X) \neq Y\bigr).
\]
Setting $D_0 \coloneqq g_*^{-1}(\{0\})$ and $D_1 \coloneqq g_*^{-1}(\{1\})$, 
we observe that $g_*$ produces a segmentation of the input space into two so-called \emph{decision regions}. 
We also call $\partial D_0 = \overline{D_0} \cap \overline{D_1} = \partial D_1$
the \emph{decision boundary} of the classification problem.

In this work, we study the complexity of the learning problem associated
to the given classification problem, i.e., 
the number of observations $m$ that are required to find a classifier with a small error.
More precisely, we will derive upper as well as lower bounds for this problem,
showing that this complexity is primarily influenced by the regularity of the decision boundary. 
In particular, \emph{we will identify properties of the decision boundary
under which the rate of decay of the generalization error
(with the number of training samples going to infinity)
is essentially independent of the ambient dimension.}

\subsection{Overview of results}
\label{sub:OverviewOfResults}

We establish the following results:

\begin{enumerate}
  \item \emph{General Lower Bounds on Estimation Problems For Classifiers:}
        We start in Section~\ref{sec:regBoundaries} by making precise our notion of
        classification functions satisfying a general smoothness assumption
        concerning the decision boundaries.
        For these classification functions, we study lower bounds for the estimation problem
        in the \emph{realizable or noiseless case}, i.e., we are given data points
        $\bigl(X_i, f(X_i)\bigr)_{i=1}^m$ for $f$ being the target classifier. 

        The reason for studying the case of noiseless (training) data is
        that in results with noisy training data, it is sometimes hard to
        understand which limits regarding the optimal learning rate result from the regularity
        of the function to be learned and which limits stem from the noisy nature of the samples.
        For instance, it is known that using noisy data, even for very simple problems
        like estimating the mean of a (Gaussian) probability distribution,
        one cannot improve on the rate $\CalO(m^{-1/2})$.
        In contrast, it was recently shown that one can estimate Sobolev functions
        of high smoothness with a high convergence rate, based on \emph{noise-free random samples}
        \cite{KriegRandomPointsForApproximatingSobolevFunctions}.
        See Subsection~\ref{sec:noiseless} for more details.
        Of course, both of these problems concern the problem
        of estimating \emph{real-valued quantities}, whereas in a classification problem
        the samples $Y_i \in \{ 0,1 \}$ are discrete-valued.
        But in this setting, it is even more natural to consider noise-free samples.

        The main result that we establish is Theorem~\ref{thm:LowerBoundEntropyL1}. 
        This result is based on classical estimation bounds by Yang and Barron
        \cite{YangBarronMinimaxRates} for the problem of \emph{density estimation};
        the first step in our analysis is thus to translate our setting
        into a density estimation problem.
        The obtained result shows that if one knows \emph{a priori} that the
        decision boundary of the ground-truth classifier $f$
        is described by a function from a class $\CalC$,
        and if the $L^1$ metric entropy%
        \footnote{As usual, the metric entropy of $\CalC$ is the logarithm of the cardinality
        of the smallest collection of $L^1$-balls of radius $\eps$ that covers $\CalC$.}
        of $\CalC$
        scales (up to log factors) with $\eps \downarrow 0$ like $\eps^{-1/\alpha}$, 
        then at least $m$ i.i.d.\ observations $\bigl(X_i, f(X_i)\bigr)_{i=1}^m$ are necessary
        to learn $f$ up to a squared $L^2$-error of order $m^{-\alpha/(\alpha+1)}$. 
        This hardness result applies to all possible estimation algorithms.
        Our result also shows that there exists a learning algorithm
        that essentially matches this lower bound.

  \item \emph{Entropy of the Barron Class:}
        The Barron class introduced in
        \cite{barron1993universal,BarronNeuralNetApproximation,Barron1994approximation}
        has recently received increased interest due to the ability of neural networks
        to approximate functions from this class with a rate independent of the ambient dimension.
        We thus consider classification problems where the decision boundary is Barron regular.
        Such classification functions were recently studied in \cite{OurBarronPaper},
        but the question of the \emph{optimal} rate for learning such classification functions
        was left open.
        To apply the previous lower bounds,
        we need to establish precise upper and lower bounds on the entropy numbers
        of the class of Barron functions. 
        In particular, we will require these estimates for the $L^1$ entropy. 

        In Proposition~\ref{prop:BarronEntropyLowerBound}
        and Proposition~\ref{prop:BarronEntropyUpperBound}, we establish upper bounds
        on the $L^\infty$ entropy and lower bounds on the $L^1$-entropy of the Barron class. 
        \emph{These bounds for the entropy numbers are quite sharp;
        the upper and lower bounds only differ by a log factor.
        To our knowledge, no sharp bounds on the $L^\infty$ entropy numbers of the Barron class
        were known up to now.}
        Combined with our general bounds for the learning problem of sets with regular boundaries,
        these entropy bounds imply that the optimal estimation bounds for classification problems
        with Barron regular decision boundaries are essentially dimension independent.
        More precisely, for an optimal learning algorithm based on $m$ noise-free training samples,
        the squared $L^2$-error scales like $m^{-1/(1 + \frac{2d}{2 + d})}$, where the
        exponent is $\approx -\frac{1}{3}$ for moderately large $d$.

  \item \emph{Learning Barron Regular Boundaries by Empirical Risk Minimization Using Neural Networks:}
        Finally, we demonstrate in Section~\ref{sec:UpperBounds} that the lower bounds
        on the estimation problem of classification problems with Barron regular decision boundary
        can almost be realized by employing empirical risk minimization
        using the Hinge loss over appropriate sets of deep neural networks. 
        This is based on combining a recent approximation result for classification functions
        with Barron regular boundaries \cite{OurBarronPaper} with a framework
        for quantifying the performance of empirical risk minimization
        with the Hinge loss that was developed in \cite{kim2021fast}. 
        Overall, \emph{this demonstrates that deep neural network-based learning can achieve
        dimension independent learning rates of high-dimensional and discontinuous functions.}
\end{enumerate}

\subsection{Functions with Barron boundary}

As mentioned above, we study classifier functions with decision boundaries
that are (locally) functions of Barron-regularity. 
The Barron class was introduced in
\cite{BarronNeuralNetApproximation,barron1993universal,Barron1994approximation}
as the set of functions with the first Fourier moment being bounded by a fixed constant. 
It was then demonstrated that these functions can be approximated by shallow neural networks
with sigmoidal activation functions at an approximation rate
that is independent of the underlying dimension.
Based on this result, several authors have interpreted the term
\emph{Barron regular functions} to encompass all those functions
that are well approximated by few ridge functions
and hence vary mainly along few designated directions. 
This interpretation can also be turned into a definition of generalized Barron spaces,
as has been done in, e.g., 
\cite{ma2020towards,ma2018priori,wojtowytsch2020banach,wojtowytsch2020representation}.

In \cite{OurBarronPaper}, classifier functions with decision boundaries of Barron regularity were studied. 
Assuming Barron regular decision boundaries is natural
in light of the above interpretation, since one expects the decision
to which class a certain example belongs to
\emph{locally mostly depend on a few features of the data}.

In this context, it should be mentioned that an intuitively reasonable
alternative model of high dimensional classification functions
with the properties above could be set up by considering all functions $\mathds{1}_{\R^+} \circ f$,
where $f$ is a classical Barron function. 
The decision boundaries of these functions correspond to the 0-level sets of Barron functions. 
However, since every compactly supported $C^\infty$ function has a finite Fourier moment,
we can find for every set of points a smooth function that takes any desired sign pattern
on these points and has a finite Fourier moment. 
Rescaling this function will not change the sign pattern
but can decrease the Fourier moment to an arbitrarily small constant. 
As a result, the function class introduced above will have an infinite VC-dimension
and is therefore not (PAC) learnable due to the fundamental theorem of learning,
see, e.g. \cite[Theorem~3.20]{mohri2018foundations}
or \cite[Theorem~6.7]{UnderstandingMachineLearning}.

\subsection{Noiseless measurements}
\label{sec:noiseless}

In many learning scenarios, it makes an \emph{immense difference} whether one considers
noisy or noise-free training data.
For instance, for learning a function $f \in C^k([0,1]^d)$ with $\| f \|_{C^k} \leq 1$
from noisy samples of the form $S = \big( (X_i, Y_i) \big)_{i=1}^m$
with $X_i \IIDsim U([0,1]^d)$ and $Y_i = f(X_i) + \eps_i$
with, e.g., $\eps_i \IIDsim N(0, \sigma^2)$ for a fixed $\sigma > 0$,
it is known (see e.g.\ \cite{StoneOptimalGlobalRatesOfConvergenceNonparametricRegression}) that
the optimal rate of decay for $\EE_S \| f - \widehat{f}_m \|_{L^q}$ (with $q < \infty$),
where $\widehat{f}_m = \widehat{f}_m (S)$ is an estimate of $f$ based on the training sample $S$,
is $\mathcal{O}(m^{-k/(2k + d)})$; in particular, even for $k \gg d$, the error decays
slower than $m^{-1/2}$.
In contrast, if in the above scenario one has $\sigma = 0$ (i.e, $Y_i = f(X_i)$),
it was recently shown \cite{KriegRandomPointsForApproximatingSobolevFunctions}
that one can achieve $\EE_S \| f - \widehat{f}_m \|_{L^q} \lesssim m^{-k/d}$,
meaning that \emph{random points are (essentially) as good as optimally chosen deterministic
sampling points $X_i$} and that one can achieve high convergence rates if $k \gg d$,
in stark contrast to the setting of noisy measurements.

One motivation for considering the questions addressed in the present paper
is to clarify whether a similarly stark contrast occurs for the problem of learning
a (discrete-valued) classification function instead of a (real-valued) regression function.
Intuitively, the influence of noise should be much less pronounced in this case,
since one knows that $f(X_i) \in \{ 0,1 \}$, so that at least any \emph{bounded} noise $\eps_i$
with $|\eps_i| < 1/2$ can be easily removed.
Nevertheless, to the best of our knowledge, this problem has not been explicitly
considered in the literature before.
The result in the literature that is most related to the findings presented here is
\cite{ImaizumiEstimatingFunctionsWithSingularitiesOnCurves}, where minimax rates were
developed for the problem of learning piecewise smooth functions
from \emph{noisy} measurements.
These functions are essentially of the form $f = g \cdot \Indicator_\Omega$
(or finite sums of functions of this form), where $g \in C^\beta ([0,1]^d)$
and $\Omega \subset [0,1]^d$ is a set with boundary of regularity $\partial \Omega \in C^\alpha$.
Our results can thus be seen as the special case where $g \equiv 1$,
but where \emph{noise-free} training samples are given.

\subsection{Related work}
\label{sub:RelatedWork}

\subsubsection{Work regarding learning of classifier functions with smooth decision boundary}

This article studies estimation by deep neural networks and is thus
part of the literature on mathematical analysis of deep learning.
Given the vast amount of literature in this area, we here
only review a very small subset of this field, namely results related to
the learning of classifier functions with smoothness conditions on the decision boundaries;
we refer to \cite{schmidhuber2015deep,goodfellow2016deep,berner2021modern}
for broader literature reviews.

To the best of our knowledge, \cite{mammen1999smooth} is the first work
that studies the complexity of classification through a smoothness assumption
on the classification regions. 
These results are further developed in \cite{tsybakov2004optimal}. 
The main observation is that the learnability of the associated classification functions
is determined by a specific notion of entropy of the underlying sets and a noise condition. 
In \cite{imaizumi2019deep, ImaizumiEstimatingFunctionsWithSingularitiesOnCurves},
estimation of classifiers with locally Hölder continuous decision boundaries
\emph{from noisy measurements} is studied. 
It is shown that in many cases, classical linear estimation methods cannot achieve
the optimal rates that deep neural networks yield.
The lower bounds are established by invoking the results
of Yang and Barron \cite{YangBarronMinimaxRates}.
This was the main inspiration for the analysis of lower bounds
in the noiseless case that we present in this work. 
The paper \cite{kim2021fast} considers a somewhat similar problem,
albeit with a focus on a specific learning algorithm based on the Hinge loss.
For this algorithm, \cite{kim2021fast} develops estimation bounds
for various types of classification problems,
one of which involves a smoothness condition on the decision boundary.

Finally, our previous work \cite{OurBarronPaper} already studied estimation bounds for functions
with Barron regular boundaries. 
However, in that study, a substantial gap between the upper and lower estimation bounds remained.
The goal of the present paper is to close this gap.

\subsubsection{Work regarding entropy numbers for Barron-type spaces}

The function class(es) nowadays called \emph{Barron functions} or \emph{Barron-type functions}
were originally introduced in \cite{BarronNeuralNetApproximation,barron1993universal,Barron1994approximation}.
Already in these papers, \emph{two different classes of functions} were considered:
One class of functions (prominently considered in \cite{Barron1994approximation}) are those having a
Fourier-like representation of the form
\begin{equation}
  f(x)
  = c
    + \int_{\R^d}
        \bigl(e^{i \langle \omega ,x \rangle} - 1\bigr)
      \, d \mu(\omega)
  \label{eq:IntroFirstBarronClass}
\end{equation}
for a complex-valued measure $\mu$ on $\R^d$ satisfying $\int_{\R^d} |\omega| d |\mu|(\omega) \leq C$,
where often the measure $\mu$ is assumed to be absolutely continuous with respect to the
Lebesgue measure, i.e., $d \mu(\omega) = F (\omega) \, d \omega$.
The second class of functions consists of the closed convex hull of the set
\begin{equation}
  \big\{
    x \mapsto \gamma \cdot \phi( \langle w, x \rangle + b)
    \,\,\colon\,\,
    |\gamma| \leq C, w \in \R^d, b \in \R
  \big\}
  ,
  \label{eq:IntroSecondBarronClass}
\end{equation}
at least when $\phi : \R \to \R$ is a sigmoidal (or at least bounded) activation function;
for unbounded activation functions one has to restrict the magnitude of the parameters
$w,b$ suitably; see for instance \cite{wojtowytsch2020representation}.

The original insights of Barron were that
\emph{i)} due to an elementary property of convex closures in Hilbert spaces,
the functions belonging to the ``second Barron class'' \eqref{eq:IntroSecondBarronClass}
can be approximated up to $L^2$ error of $\mathcal{O}(n^{-1/2})$
using shallow neural networks with $n$ neurons; see \cite[Lemma~1]{barron1993universal},
and \emph{ii)} for sigmoidal activation functions $\phi$,
the ``second Barron class'' \eqref{eq:IntroSecondBarronClass} contains the
``first Barron class'' \eqref{eq:IntroFirstBarronClass}.
We remark, however, that \emph{this inclusion fails for other activation functions like the ReLU};
see \cite[Section~7]{OurBarronPaper} for a detailed discussion, which is partly based on
observations from \cite{wojtowytsch2020representation} and \cite{KlusowskiBarronRiskBoundsRidgeFunctions}.

In \Cref{sec:SetsWithBarronBoundary} of the present paper, we derive upper and lower bounds
for the (Kolmogorov metric) entropy numbers of the ``first Barron class''
\eqref{eq:IntroFirstBarronClass} with respect to the $L^\infty$ metric (for upper bounds)
and the $L^1$ metric (for lower bounds).
To the best of our knowledge, these bounds are novel results.
Similar bounds for the $L^2$ metric (more generally, for $L^q$ with $q < \infty$) instead
of the $L^\infty$ metric and for quite general ``Barron-type spaces''
as in \eqref{eq:IntroSecondBarronClass} (instead of as in \eqref{eq:IntroFirstBarronClass})
were recently obtained in \cite{SiegelSharpBoundsOnMetricEntropyOfShallowNN}.
Such entropy bounds are useful for a diverse range of applications in statistical learning theory;
see for instance \cite{YangBarronMinimaxRates,tsybakov2004optimal}.
Furthermore, there is a close connection between approximation results
and entropy bounds; see for instance \mbox{\cite[Proposition~3.6]{petersen2018optimal}},
\cite[Section~5.4]{GrohsOptimallySparse}, and \cite[Theorem~9]{SiegelSharpBoundsOnMetricEntropyOfShallowNN}.
Precisely, approximation bounds imply upper bounds on the entropy numbers;
equivalently, lower bounds on the entropy numbers entail lower bounds for approximation.

For this reason, the existing literature regarding approximation of Barron-type
functions is also relevant for the question of entropy numbers considered here.
The original bounds by Barron \cite{BarronNeuralNetApproximation,barron1993universal,Barron1994approximation}
show that Barron functions can be approximated (in $L^2$) by (shallow) neural networks
at a rate that is \emph{independent of the ambient dimension};
namely, networks with $n$ neurons can achieve $L^2$ error $\mathcal{O}(n^{-1/2})$.
Makovoz \cite{MakovozUniformApproximationByNN} showed \emph{for a slightly smaller set
of Barron-type functions} that one can even achieve approximation error
$\CalO(n^{-\frac{1}{2} - \frac{1}{2 d}})\sqrt{\ln(1+n)}$ in the uniform norm.
For the case where one assumes \emph{two} finite Fourier moments
(i.e., $\int_{\R^d} |\omega|^2 \, d |\mu|(\omega) < \infty$) in \eqref{eq:IntroFirstBarronClass},
it was shown in \cite[Theorem~2]{KlusowskiBarronApproximationWithEll1Controls}
that networks with $n$ neurons can achieve $L^\infty$ error of $\CalO(n^{-1/2 - 1/d})$.
In \cite{BachBreakingTheCurse}, it was even shown that functions belonging to
the ``second Barron space'' \eqref{eq:IntroSecondBarronClass} associated to the ReLU
can be approximated uniformly up to error $\CalO(n^{-\frac{1}{2} - \frac{3}{2} d})$
using shallow ReLU networks with $n$ neurons.
We note that this provides a further proof of the fact
(observed in \cite[Proposition~7.4]{OurBarronPaper}) that the ``first Barron space''
from \Cref{eq:IntroFirstBarronClass} is \emph{not} included in the ``second Barron space''
associated to the ReLU, since the approximation rate derived in \cite{BachBreakingTheCurse}
beats the optimal approximation rate for the ``first Barron space,''
as implied by our entropy results.

All these results showing that Barron-type functions can be approximated
by neural networks with a dimension-independent approximation rate
are usually cited as evidence that neural networks can overcome the curse of dimension.
While this is certainly true, it has been noted by Candès \cite[Section~7.2]{CandesThesis}
that also other representation systems---and in particular the Fourier basis---%
achieve $L^2$ approximation error $\mathcal{O}(n^{-1/2 - 1/d})$ for functions
from the ``first Barron class'' \eqref{eq:IntroFirstBarronClass}.
The deep results in \cite{DeVoreTemlyakovNonlinearApproximationTrigonometric} by DeVore and Temlyakov
imply (after a bit of work; see \Cref{lem:FourierSeriesEntropy} and \Cref{prop:BarronEntropyUpperBound})
that a similar result also holds for $L^\infty$ approximation;
this is the basis for our estimate of the $L^\infty$
entropy numbers of the Barron class.

 \section{Sets with regular boundaries}
\label{sec:regBoundaries}

In this article, we are interested in upper and lower bounds
for the learning problem of \emph{estimating classification functions
with decision boundaries that are locally of Barron regularity}.
Since there are various slightly different interpretations of the term ``Barron function'' in the literature,
we start by making a precise definition of this notion for the rest of this work.

\begin{definition}\label{def:IntroBarronClass}
  Let $d \in \N$.
  A function $f : [0, 1]^d \to [0,1]$ is said to be of Barron class with constant $C > 0$,
  if there exist a measurable function $F : \R^d \to \CC$  and some $c \in [-C, C]$ satisfying
  \[
    f(x)
    = c + \int_{\R^d} \bigl(e^{i \langle x,\xi \rangle} - 1\bigr)  \cdot F(\xi) \, d \xi
    \text{ for all } x \in [0,1]^d
    \qquad \text{and} \qquad
    \int_{\R^d} |\xi| \cdot |F(\xi)| \, d \xi
    \leq C .
  \]
  We write $B(C) = B_d (C)$ for the class of all such functions.
\end{definition}

Regarding the lower bounds (i.e., hardness results) for the learning problem,
we will derive these in a more general setting
since this does not involve any additional difficulties
and more clearly exhibits those properties of the boundary class that
determine the difficulty of the learning problem.
Therefore, we will formally introduce classification functions with decision boundaries
that are locally of a \emph{general} type of regularity;
classifiers with Barron regular decision boundaries will then be a special case of this notion.

To prepare the definition of classification functions with regular decision boundaries,
we first consider so-called \emph{general Horizon functions}:

\begin{definition}\label{def:HorizonFunctions}
	Let $d \in \N_{\geq 2}$.
	Given any function $b : [0,1]^{d-1} \to [0,1]$,
  define the associated \emph{general Horizon function} as
	\[
    h_b : \quad
    [0,1]^d \to \{0,1\}, \quad
    x = (x_1,\dots,x_d) \mapsto \Indicator_{b(x_1,\dots,x_{d-1}) \leq x_d}.
	\]
	Given a set $\emptyset \neq \CalC \subset C([0,1]^{d-1}; [0,1])$,
	define the set of \emph{general horizon functions} associated to $\CalC$ as
	\[
    H_{\CalC}
    \coloneqq \left\{
                h_b
                \colon
                b \in \CalC
              \right\}.
	\]
\end{definition}

\begin{rem*}
	The definition above differs slightly from alternative definitions in the literature,
  such as \cite{barron1993universal,OurBarronPaper}, by the additional requirement
  that the functions only take values in $[0,1]$.
\end{rem*}

We will often use, for a given $h \in H_{\CalC}$,
the associated function $b \in \CalC$ (called the \emph{boundary-defining function})
such that $h = h_b$.
Since in the setting of \Cref{def:HorizonFunctions}
the map $\CalC \to H_{\CalC} \colon b \mapsto h_b$ is bijective,
this operation is well-defined.

We now define classifiers with regular decision boundaries as those functions
that are locally described by general Horizon functions.

\begin{definition}\label{def:ClassifiersWithRegularDecisionBoundary}
	Let $M, d \in \N$ with $d \geq 2$ and let $\emptyset \neq \CalC \subset C([0,1]^{d-1}; [0,1])$. 
	A compact set $\Omega \subset [0,1]^d$ has \emph{$(\CalC,M)$-regular decision boundary}
  if there exist (closed, axis-aligned, non-degenerate) rectangles $Q_1, \dots, Q_M \subset [0,1]^d$
	such that $\Omega \subset \bigcup_{i=1}^M Q_i$, where the $Q_i$ have disjoint interiors
  (i.e., $\interior{Q}_i \cap \interior{Q}_j = \emptyset$ for $i \neq j$) and such that either 
	\[
		\mathds{1}_{\Omega} = f_i \circ P_i
    \qquad \text{or} \qquad
    \mathds{1}_{\Omega} = 1 - f_i \circ P_i
    \qquad \text{almost everywhere on} \quad Q_i,
	\]
	for a general horizon function $f_i \in H_{\CalC}$ associated to $\CalC$
  and a $d$-dimensional permutation matrix $P_i$. 
	We write $\RegSet_{\CalC}(d, M)$ for the class of all such sets. 

	Next, we define the \emph{set of all classifiers with $(\CalC,M)$-regular decision boundary as}
	\[
		\RegClass_\CalC(d,M)
    \coloneqq \big\{ \mathds{1}_\Omega \colon \Omega \in \RegSet_{\CalC}(d, M) \big\}.
	\]

  Finally, \emph{Barron horizon functions and classifiers with Barron-regular decision boundary}
  are the elements of $H_{B(C)}$ and $\RegClass_{B(C)}(d, M)$,
  where $B(C)$ is as introduced in \Cref{def:IntroBarronClass}.
\end{definition}

\begin{remark}\label{remark:NewBarronIsSpecialCaseOfOldBarron}
  Since we will apply approximation results from \cite{OurBarronPaper} to indicator functions
  from the set $\RegClass_{B(C)}(d,M)$, it is important to note that
  there exists an \emph{absolute constant} $\tau \geq 1$ such that
  \[
    \text{if } \Indicator_\Omega \in \RegClass_{B(C)}(d,M)
    \quad \text{then} \quad
    \Omega \in \mathcal{BB}_{\tau \sqrt{d} C, M} (\R^d)
    ,
  \]
  where the class $\mathcal{BB}_{B,M}(\R^d)$ is as defined in \cite[Definition~3.3]{OurBarronPaper}.
  Since the verification of this claim is mainly technical,
  we postpone it to \Cref{sec:PostponedTechnicalResults}.
\end{remark}

\section{Lower bounds for learning sets with regular boundaries}
\label{sec:LowerBounds}

We would like to answer the general question of how efficiently one can learn
functions with $\CalC$-regular decision boundary for general function classes $\CalC$.
For this, we determine general $\inf-\sup$ relations of the following form:
 
Let $\Lambda \coloneqq [0,1]^{d} \times \{ 0,1 \}$.
Denote the \emph{set of estimators}, i.e., the set of (measurable)
maps from $\Lambda^m$  to $L^2([0,1]^{d})$, by $\mathcal{A}_m(\Lambda, L^2)$.
We will establish a relation of the form
\begin{align}\label{eq:desiredlowerbound}
	\kappa_1(m)
	\leq \inf_{A \in \mathcal{A}_m(\Lambda, L^2)} \,\,
         \sup_{h \in H_{\CalC}} \,
           \mathbb{E}_{(X_i)_{i=1}^m \IIDsim \lebesgue}
             \big\| A\bigl( (X_i, h(X_i))_{i=1}^m \bigr)  - h \big\|_{L^2(\lebesgue)}^2
	\leq \kappa_2(m)
	\quad \forall \, m \in \N
  ,
\end{align}
where $\kappa_1, \kappa_2$ are functions from $\N$ to $\R^+$
and $\lebesgue$ is the uniform (Lebesgue) probability measure on $[0,1]^{d}$. 

Below, we will identify $\kappa_1$ and $\kappa_2$ based on properties of $\CalC$.
We will first show that instead of taking the infimum over $A \in \mathcal{A}_m(\Lambda, L^2)$,
we can study a slightly more convenient  $\inf-\sup$ relation where the infimum is taken
over a smaller set of estimators.
Then, we will relate the simplified relation to a density estimation problem,
for which $\inf-\sup$ relations have been established in the literature,
specifically in \cite{YangBarronMinimaxRates} and \cite{imaizumi2019deep}.

We remark that---within this general framework---we only demonstrate lower bounds
for the potentially simpler problem of learning \emph{horizon} functions
compared to learning general classifiers with regular decision boundaries.
These lower bounds certainly also hold for the more challenging problem.
However, while we will observe that in many cases $\kappa_1, \kappa_2$
are asymptotically very close which indicates that the lower bounds cannot be increased,
this is not necessarily so for the estimation problem of general classifier functions. 
In other words, while the derived lower bounds will essentially be optimal for
learning \emph{horizon} functions, and while the \emph{lower bound} also holds
for the problem of estimating general classifiers with regular decision boundaries,
it is not clear that these lower bounds are sharp in the general setting.
For classifiers with Barron-regular decision boundary, however, we will see in
\Cref{sec:UpperBounds} that the derived lower bound is (essentially) optimal.

\subsection{The strict estimation problem}
\label{sub:StrictEstimation}

Given a class $\CalH \subset L^2([0,1]^d)$ of functions that one would like to estimate
based on point samples, one can either insist that the estimator produces an element
of $\CalH$ or allow more freedom, meaning that the estimator is permitted
to produce arbitrary elements of $L^2([0,1]^d)$.
In the following definition, we define both kinds of estimators
and the associated minimax errors.
Below, we will show that the minimax errors for both classes
coincide, up to a constant factor.

\begin{definition}\label{def:Estimator}
  Denote by $L^2([0,1]^d; \{ 0,1 \})$ the set of all functions $f \in L^2([0,1]^d)$
  taking values in $\{ 0,1 \}$, i.e., indicator functions.
	Let $\emptyset \neq \CalH \subset L^2([0,1]^d ; \{ 0,1 \})$ be arbitrary
	and let $\Lambda := [0,1]^d \times \{ 0,1 \}$.
	For $m \in \N$, denote by $\CalA_m (\Lambda, \CalH)$
  the set of all measurable maps $\Lambda^m \to \CalH$.
	
	For $X = (X_1,\dots,X_m) \in ([0,1]^d)^m$ and $h : [0,1]^d \to \{ 0,1 \}$, define
	$X^h := \bigl( (X_i, h(X_i))\bigr)_{i=1}^m \!\!\in\! \Lambda^m$.
	Finally, denoting by $\lebesgue^m$ the Lebesgue measure on $([0,1]^{d})^m \cong [0,1]^{d m}$,
  define
	\begin{alignat*}{5}
		& \eps_m (\CalH)
		& := \inf_{A \in \CalA_m (\Lambda,L^2)} \,\,
           \sup_{h \in \CalH} \,\,
             \EE_{X \sim \lebesgue^m}
               \big\| A(X^h) - h \big\|_{L^2}^2 \\
		\text{and} \quad
		& \eps_m^{\strict} (\CalH)
		& := \inf_{A \in \CalA_m (\Lambda,\CalH)} \,\,
           \sup_{h \in \CalH} \,\,
             \EE_{X \sim \lebesgue^m}
               \big\| A(X^h) - h \big\|_{L^2}^2 .
	\end{alignat*}
\end{definition}

The next lemma is the main technical ingredient for showing
$\eps_m(\CalH) \asymp \eps_m^{\strict}(\CalH)$.

\begin{lemma}\label{lem:ApproximateProjection}
	Let $(X, \dist)$ be a metric space and let $\emptyset \neq M \subset X$ be separable.
	Then for each $\eps > 0$ there exists a \emph{measurable} map $\pi_\eps : X \to M$ satisfying
	$\dist(x,\pi_\eps (x)) \leq \eps + \dist (x,M)$ for all $x \in X$.
\end{lemma}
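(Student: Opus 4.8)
The plan is to build $\pi_\eps$ explicitly as a "piecewise constant" map subordinate to a countable partition of $X$ into measurable pieces, on each of which $\pi_\eps$ takes a single value in $M$ that is an $\eps$-almost-nearest point. Since $M$ is separable, fix a countable dense set $\{y_n : n \in \N\} \subset M$. The open balls $B_n \coloneqq \{x \in X : \dist(x, y_n) < \dist(x, M) + \eps\}$ are measurable (indeed open, since $x \mapsto \dist(x,y_n)$ and $x \mapsto \dist(x,M)$ are continuous) and, by density of $\{y_n\}$ in $M$ together with the definition of $\dist(x,M)$ as an infimum, they cover $X$: for any $x$ there is $y \in M$ with $\dist(x,y) < \dist(x,M) + \eps/2$, and then some $y_n$ with $\dist(y_n, y) < \eps/2$, so $x \in B_n$.

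Next I would disjointify: set $A_1 \coloneqq B_1$ and $A_n \coloneqq B_n \setminus (B_1 \cup \dots \cup B_{n-1})$ for $n \geq 2$. The $A_n$ are measurable, pairwise disjoint, and cover $X$. Define
\[
  \pi_\eps : X \to M, \qquad
  \pi_\eps(x) \coloneqq y_n \quad \text{whenever } x \in A_n .
\]
This is well-defined since the $A_n$ partition $X$, and it is measurable because $\pi_\eps^{-1}(S) = \bigcup_{n : y_n \in S} A_n$ is a countable union of measurable sets for any $S \subset M$ (so in particular for any Borel $S$, regardless of how $M$ is equipped as a measurable space — taking the $\sigma$-algebra on $M$ generated by singletons, or the Borel $\sigma$-algebra of the subspace metric, both work). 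Finally, for $x \in A_n \subset B_n$ we have by construction $\dist(x, \pi_\eps(x)) = \dist(x, y_n) < \dist(x,M) + \eps$, which gives the claimed inequality (with strict inequality, even).

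There is no real obstacle here; the only point requiring a little care is the measurability of $\pi_\eps$, which is why the disjointification step is used rather than, say, an abstract selection theorem. The continuity of the distance functions $\dist(\cdot, y_n)$ and $\dist(\cdot, M)$ makes each $B_n$ open and hence measurable, and the countability of the dense set $\{y_n\}$ is exactly what turns the pointwise "$\eps$-almost-minimizer exists" statement into a genuine measurable choice.
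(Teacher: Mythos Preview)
Your proof is correct and follows essentially the same approach as the paper's: fix a countable dense subset $\{y_n\} \subset M$, define the ``almost-best'' sets where $y_n$ is within $\eps$ of optimal, disjointify them into a measurable partition of $X$, and let $\pi_\eps$ be the resulting piecewise-constant map. The only cosmetic differences are that the paper uses the closed sets $\{x : \dist(x,y_n) \le \eps + \dist(x,M)\}$ (yielding the non-strict inequality as stated) while you use the open version (yielding a strict inequality), and the paper verifies the covering property via the identity $\dist(x,M) = \inf_n \dist(x,y_n)$ rather than your explicit $\eps/2$ triangle-inequality argument.
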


\begin{proof}
	Since $M \neq \emptyset$ is separable, there exists a countable family $(y_n)_{n \in \N} \subset M$
	that is dense in $M$.
	In particular, this implies
	\begin{equation}
		\dist(x,M)
		= \inf_{n \in \N} \dist(x,y_n)
		\qquad \forall \, x \in X.
		\label{eq:DistanceUsingDenseSet}
	\end{equation}
	Now, define $X_0 := \emptyset$ and inductively
	\[
    X_{n+1}^{(0)}
    := \big\{
         x \in X
         \colon
         \dist(x, y_{n+1}) \leq \eps + \dist (x,M)
       \big\}
    \quad \text{and} \quad
    X_{n+1} := X_{n+1}^{(0)} \setminus \bigcup_{\ell=1}^n X_\ell
	\]
	for $n \in \N_0$.
  By continuity of $\dist(\bullet, y_{n+1})$ and $\dist(\bullet,M)$,
  it follows that each set $X_{n+1}^{(0)} \subset X$ is closed;
  hence, each $X_n \subset X$ is measurable.
	Furthermore, it follows from \Cref{eq:DistanceUsingDenseSet} that $X = \biguplus_{n=1}^\infty X_n$.
	Overall, this shows that
	\[
    \pi_\eps : \quad
    X \to M, \quad
    x \mapsto \sum_{n=1}^\infty y_n \Indicator_{X_n}(x)
	\]
	is well-defined and measurable.
	Finally, given any $x \in X$ we have $x \in X_n \subset X_n^{(0)}$ for a unique
	$n \in \N$ and hence
	\(
    \dist(x,\pi_\eps(x))
    = \dist(x, y_n) 
    \leq \eps + \dist(x,M) .
	\)
\end{proof}

\begin{lemma}\label{lem:StrictEstimatorEquivalence}
	With notation as in \Cref{def:Estimator}, we have
	\[
    \eps_m (\CalH)
    \leq \eps_m^{\strict} (\CalH)
    \leq 4 \, \eps_m (\CalH) .
	\]
\end{lemma}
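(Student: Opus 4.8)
The plan is to prove the two inequalities separately. The left inequality $\eps_m(\CalH) \le \eps_m^{\strict}(\CalH)$ is immediate from the definitions: since $\CalH \subset L^2([0,1]^d)$, every strict estimator is in particular an $L^2$-valued estimator, so $\CalA_m(\Lambda,\CalH) \subset \CalA_m(\Lambda, L^2)$, and the infimum defining $\eps_m(\CalH)$ ranges over a larger set of estimators than the one defining $\eps_m^{\strict}(\CalH)$.

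For the right inequality, I would turn an arbitrary (nearly optimal) $L^2$-valued estimator into a strict one by post-composing it with an approximate metric projection onto $\CalH$. Concretely, $L^2([0,1]^d)$ is separable, hence so is $\CalH$, so \Cref{lem:ApproximateProjection} applies (with $X = L^2([0,1]^d)$, $M = \CalH$, and $\dist$ the $L^2$-metric) and for every $\eps > 0$ yields a \emph{measurable} map $\pi_\eps : L^2([0,1]^d) \to \CalH$ with $\|g - \pi_\eps(g)\|_{L^2} \le \eps + \dist(g,\CalH)$ for all $g$. Given $A \in \CalA_m(\Lambda, L^2)$, I set $\widetilde{A} := \pi_\eps \circ A$, which is measurable and $\CalH$-valued, hence lies in $\CalA_m(\Lambda, \CalH)$.

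The key estimate is pointwise: for $h \in \CalH$ and $X \in ([0,1]^d)^m$, since $h$ itself witnesses $\dist(A(X^h),\CalH) \le \|A(X^h) - h\|_{L^2}$, the triangle inequality gives $\|\widetilde{A}(X^h) - h\|_{L^2} \le \|\pi_\eps(A(X^h)) - A(X^h)\|_{L^2} + \|A(X^h) - h\|_{L^2} \le \eps + 2\,\|A(X^h) - h\|_{L^2}$. Squaring, taking the expectation over $X \sim \lebesgue^m$, applying Cauchy--Schwarz to the cross term via $\EE\|A(X^h)-h\|_{L^2} \le \bigl(\EE\|A(X^h)-h\|_{L^2}^2\bigr)^{1/2}$, and then passing to the supremum over $h \in \CalH$ and the infimum over $A$, I obtain $\eps_m^{\strict}(\CalH) \le \eps^2 + 4\eps\sqrt{\eps_m(\CalH)} + 4\,\eps_m(\CalH)$; letting $\eps \downarrow 0$ gives the claimed bound with constant $4$.

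The only subtle point — hardly an obstacle, given \Cref{lem:ApproximateProjection} — is getting the constant down to $4$: the naive bound $(\eps + 2t)^2 \le 2\eps^2 + 8t^2$ would only yield $\eps_m^{\strict}(\CalH) \le 8\,\eps_m(\CalH)$, so it is important to retain the cross term and bound it by Cauchy--Schwarz rather than Young's inequality, exploiting that $\eps$ is sent to zero at the very end. One should also record that the composition $\pi_\eps \circ A$ is measurable (this is where the measurability assertion of \Cref{lem:ApproximateProjection} is used) so that $\widetilde{A}$ is a legitimate element of $\CalA_m(\Lambda,\CalH)$.
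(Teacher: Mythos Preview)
Your proof is correct and follows essentially the same route as the paper: both arguments post-compose an arbitrary $L^2$-valued estimator with the approximate projection $\pi_\eps$ from \Cref{lem:ApproximateProjection}, arrive at the pointwise bound $\|\widetilde{A}(X^h)-h\|_{L^2}\le \eps + 2\|A(X^h)-h\|_{L^2}$, square, and let $\eps\downarrow 0$. The only cosmetic difference is the treatment of the cross term: you take the expectation first and control $\EE\|A(X^h)-h\|_{L^2}$ via Jensen/Cauchy--Schwarz, whereas the paper uses the pointwise inequality $(\eps+b)^2 \le \eps^2+\eps+(1+\eps)b^2$ before integrating; both deliver the constant $4$ in the limit.
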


\begin{proof}
	Since $\CalA_m(\Lambda,L^2) \supset \CalA_m(\Lambda,\CalH)$,
	we see $\eps_m(\CalH) \leq \eps_m^{\strict}(\CalH)$.
	
	Conversely, let $B \in \CalA_m (\Lambda,L^2)$ and $\eps > 0$ be arbitrary.
	Since $L^2([0,1]^d)$ is separable, \Cref{lem:ApproximateProjection} applied with $X = L^2([0,1]^d)$
  and $M = \CalH$ yields a measurable map $\pi_\eps : L^2([0,1]^d) \to \CalH$ satisfying
	$\| f - \pi_\eps (f) \|_{L^2} \leq \eps + \dist(f, \CalH)$ for all $f \in L^2([0,1]^d)$.
	
	Note that $A := \pi_\eps \circ B \in \CalA_m (\Lambda,\CalH)$.
	Furthermore, using the estimate $0 \leq (a-b)^2 = a^2 - 2 a b + b^2$, which holds for $a,b \in \R$,
	we see that $2 \eps b = 2 \cdot \sqrt{\eps} \cdot \sqrt{\eps} b \leq \eps + \eps b^2$ for $b \geq 0$,
	and hence $(\eps + b)^2 = \eps^2 + 2 \eps b + b^2 \leq \eps^2 + \eps + (1+\eps) b^2$.
	Therefore, we obtain for each $h \in \CalH$ and $X \in ([0,1]^d)^m$ that
	\begin{align}
		\| A(X^h) - h \|_{L^2}^2 \nonumber
		& \leq \big(
		\| A(X^h) - B(X^h) \|_{L^2}
		+ \| B(X^h) - h \|_{L^2}
		\big)^2 \nonumber\\
		& \leq \big(
		\eps + \dist (B(X^h), \CalH) + \| B(X^h) - h \|_{L^2}
		\big)^2 \nonumber\\
		& \leq \big( \eps + 2 \, \| B(X^h) - h \|_{L^2} \big)^2
		\leq \eps^2 + \eps + (1+\eps) \cdot 4 \cdot \| B(X^h) - h \|_{L^2}^2.
    \label{eq:IDoNotWantToRepeatThisComputation}
	\end{align}
	After taking the expectation $\EE_{X \sim \lebesgue^m}$ and the supremum $\sup_{h \in \CalH}$,
  this shows
	\[
    \eps_m^{\strict}(\CalH)
    \leq \sup_{h \in \CalH}
           \EE_{X \sim \lebesgue^m}
         \| A(X^h) - h \|_{L^2}^2
    \leq \eps^2
         + \eps
         + (1+\eps)
         \cdot 4
         \cdot \sup_{h \in \CalH}
         \EE_{X \sim \lebesgue^m}
           \| B(X^h) - h \|_{L^2}^2 .
	\]
	Letting $\eps \downarrow 0$ and noting that $B \in \CalA_m (\Lambda,L^2)$ was arbitrary,
	we see $\eps_m^{\strict}(\CalH) \leq 4 \, \eps_m(\CalH)$, as claimed.
\end{proof}

\begin{remark}
   It follows from \Cref{lem:StrictEstimatorEquivalence} that to provide estimates
   for $\kappa_1$ and $\kappa_2$ in \eqref{eq:desiredlowerbound}
   up to an absolute multiplicative constant,
   we only need to study the smaller set of estimators $\CalA_m(\Lambda,\CalH)$. 
\end{remark}

\subsection{Density estimation problems}
\label{sub:DensityEstimationProblems}

We start by reformulating the learning problem of horizon functions as a density estimation problem. 
This will be based on the following notation:

\begin{definition}\label{def:AssociatedDensities}
  Let $d \in \N_{\geq 2}$.
  We consider $\Lambda = [0,1]^d \times \{ 0,1 \}$ to be equipped with the measure
  $\measureTheta := \lebesgue \otimes \nu$, where $\lebesgue$ is the Lebesgue measure on $[0,1]^d$
  and $\nu$ is the uniform probability measure on $\{0,1\}$.
  Furthermore, we denote by $\CalM([0,1]^{d-1}, [0,1])$ the set of all measurable functions
  $b : [0,1]^{d-1} \to [0,1]$.

  Define
  \[
    \Psi : \quad
    \R \to [0,1], \quad
    x \mapsto \min \big\{ 1, \,\, \max \{ 0, \, x \} \big\}
    .
  \]
  Then, given any measurable function $f : [0,1]^d \to \R$, define
  \[
    p[f] : \quad
    \Lambda \to [0,2], \quad
    (x,\iota) \mapsto \begin{cases}
                        2 \, \Psi(f(x)),     & \text{if } \iota = 1, \\
                        2 - 2 \, \Psi(f(x)), & \text{if } \iota = 0.
                      \end{cases}
  \]
  Finally, given any $b \in \CalM([0,1]^{d-1}, [0,1])$,
  we define $p_b := p[h_b]$, where $h_b$ is the Horizon function associated to $b$,
  as introduced in \Cref{def:HorizonFunctions}.
  It is easy to see that
	\begin{align}\label{eq:defOfDensities}
		p_b(x, \iota)
    = \begin{cases}
        2 \, h_b(x),         & \text{ if } \iota = 1, \\
        2 \,-\, 2 \, h_b(x), & \text{ if } \iota = 0
      \end{cases}
    \qquad \text{for} \qquad
    (x,\iota) \in \Lambda .
	\end{align}
  Finally, given any set $\CalC \subset \CalM([0,1]^{d-1}, [0,1])$,
  we write $\mathcal{P}_\CalC \coloneqq \{ p_b \colon b \in \CalC\}$.
\end{definition}

The following lemma confirms that $p_b$ is a probability density and identifies
the associated distribution:

\begin{lemma}\label{lem:LiftingMap}
  \begin{itemize}
    \item With notation as in \Cref{def:AssociatedDensities},
          we have $\big\| p[f] - p[g] \big\|_{L^1(\measureTheta)} \!\leq\! 2 \, \| f - g \|_{L^1([0,1]^d)}$
          for all $f,g \in L^1([0,1]^d)$.
	
    \item If $A \subset [0,1]^d$ is measurable, then $p[\Indicator_A] \, d \measureTheta$
          is the distribution of the random vector $(X,\Indicator_A(X))$, where $X \sim U([0,1]^d)$.
  \end{itemize}
\end{lemma}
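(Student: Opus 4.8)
The plan is to prove the two items separately, both by direct computation.

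For the first item, I would observe that by definition $p[f](x,\iota) = 2\Psi(f(x))$ when $\iota=1$ and $p[f](x,\iota) = 2 - 2\Psi(f(x))$ when $\iota=0$, and similarly for $g$; hence for each fixed $x$, both cases yield $|p[f](x,\iota) - p[g](x,\iota)| = 2\,|\Psi(f(x)) - \Psi(g(x))|$. Integrating over $\Lambda$ with respect to $\measureTheta = \lebesgue \otimes \nu$, and using that $\nu$ is a probability measure on the two-point set $\{0,1\}$, the $\iota$-integral just averages the two (equal) values, so $\| p[f] - p[g] \|_{L^1(\measureTheta)} = 2 \int_{[0,1]^d} |\Psi(f(x)) - \Psi(g(x))| \, d\lebesgue(x)$. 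Since $\Psi$ is $1$-Lipschitz (it is a clipping/truncation map), we have $|\Psi(f(x)) - \Psi(g(x))| \leq |f(x) - g(x)|$ pointwise, which gives the claimed bound $\| p[f] - p[g] \|_{L^1(\measureTheta)} \leq 2 \, \| f-g \|_{L^1([0,1]^d)}$.

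For the second item, let $A \subset [0,1]^d$ be measurable and set $h = \Indicator_A$. Since $h$ takes values in $\{0,1\} \subset [0,1]$, we have $\Psi(h(x)) = h(x)$, so $p[\Indicator_A]$ is exactly the function in \eqref{eq:defOfDensities}: it equals $2h(x)$ when $\iota=1$ and $2 - 2h(x)$ when $\iota=0$. To identify the distribution of $Z := (X, \Indicator_A(X))$ with $X \sim U([0,1]^d)$, it suffices to check that $\PP(Z \in E \times \{\iota\}) = \int_{E \times \{\iota\}} p[\Indicator_A] \, d\measureTheta$ for every measurable $E \subset [0,1]^d$ and each $\iota \in \{0,1\}$, since sets of this form generate the product $\sigma$-algebra on $\Lambda$ and are closed under intersection. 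For $\iota = 1$: $\PP(X \in E, \Indicator_A(X) = 1) = \PP(X \in E \cap A) = \lebesgue(E \cap A) = \int_E h \, d\lebesgue$, while $\int_{E \times \{1\}} p[\Indicator_A]\, d\measureTheta = \nu(\{1\}) \int_E 2h \, d\lebesgue = \tfrac12 \int_E 2h\, d\lebesgue = \int_E h \, d\lebesgue$, so the two agree. The case $\iota = 0$ is analogous, using $\PP(X \in E, \Indicator_A(X) = 0) = \lebesgue(E \setminus A) = \int_E (1-h)\, d\lebesgue$ and $\int_{E\times\{0\}} p[\Indicator_A] \, d\measureTheta = \tfrac12 \int_E (2 - 2h) \, d\lebesgue = \int_E (1-h)\,d\lebesgue$. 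In particular, taking $E = [0,1]^d$ and summing over $\iota$ shows $p[\Indicator_A]\, d\measureTheta$ is a probability measure, confirming that $p_b$ is indeed a probability density.

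Neither step presents a real obstacle; the only points requiring a little care are the Lipschitz property of the clipping map $\Psi$ (immediate from its piecewise-linear form) and the reduction to a generating $\pi$-system of measurable rectangles $E \times \{\iota\}$ when identifying the distribution in the second item. A uniqueness-of-measures argument (Dynkin's $\pi$–$\lambda$ theorem, or simply noting that finite measures agreeing on such rectangles agree on $\Lambda$) closes that gap.
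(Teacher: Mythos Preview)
Your proposal is correct and follows essentially the same approach as the paper: both parts are direct computations using the $1$-Lipschitz property of $\Psi$ for the first item and an explicit identification of the two measures on $\Lambda$ for the second. The only cosmetic difference is that in the second item the paper verifies equality directly for an arbitrary measurable set $M \subset \Lambda$ (which is immediate since the second factor $\{0,1\}$ is finite, so every measurable $M$ splits as $M_0 \times \{0\} \cup M_1 \times \{1\}$), whereas you check on the generating $\pi$-system of rectangles $E \times \{\iota\}$ and appeal to uniqueness of measures; both arguments are equally short.
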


\begin{proof}
	\textbf{Part~1:}
	It is easy to see $|\Psi(x) - \Psi(y)| \leq |x-y|$ for $x,y \in \R$.
	Hence,
	\begin{align*}
		\big\| p[f] - p[g] \big\|_{L^1(\measureTheta)}
		& = \frac{1}{2}
        \Big(
          \big\| p[f](\myBullet, 0) - p[g](\myBullet,0) \big\|_{L^1([0,1]^d)}
          + \big\| p[f](\myBullet, 1) - p[g](\myBullet,1) \big\|_{L^1([0,1]^d)}
        \Big) \\
		& = \frac{1}{2}
        \Big(
          2 \, \big\| \Psi \circ g - \Psi \circ f \big\|_{L^1([0,1]^d)}
          + 2 \, \big\| \Psi \circ f - \Psi \circ g \big\|_{L^1([0,1]^d)}
        \Big) \\
    & \leq 2 \, \| f - g \|_{L^1([0,1]^d)} .
	\end{align*}
	
	\textbf{Part~2:}
	Let $M \subset \Lambda = [0,1]^d \times \{ 0,1 \}$ be measurable.
	Then
	\begin{align*}
		\PP \bigl( (X,\Indicator_A (X)) \in M\bigr)
		& = \int_{[0,1]^d}
          \Indicator_M \bigl( (x, \Indicator_A (x)) \bigr)
        \, d x
      = \int_A \Indicator_M ((x,1)) \, d x
        + \int_{[0,1]^d \setminus A}
            \Indicator_M ((x,0))
          \, d x \\
		& = \frac{1}{2}
        \int_{[0,1]^d}
          \Indicator_M ( (x,1) )
          \cdot p[\Indicator_A] (x,1)
        \, d x
        + \frac{1}{2}
          \int_{[0,1]^d}
            \Indicator_{M} ((x,0))
            \cdot p[\Indicator_A] (x,0)
          \, d x \\
		& = \int_{\Lambda}
          \Indicator_M ( (x,\iota) )
          \cdot p[\Indicator_A] (x,\iota)
        \, d \measureTheta(x,\iota) .
		\qedhere
	\end{align*}
\end{proof}

Next, we establish the connection between the desired bounds in \eqref{eq:desiredlowerbound}
and a density estimation problem.
This density estimation problem will then be studied
in the framework introduced in \cite{YangBarronMinimaxRates}. 

The density estimation problem associated to $\mathcal{P}_\CalC$
is to find (or estimate) $p_b \in \mathcal{P}_\CalC $ based on $m$ i.i.d.\ samples
$\bigl( (x_i, \iota_i) \bigr)_{i=1}^m \IIDsim p_b$.
To make the connection between the density estimation problem
and the learning problem \eqref{eq:desiredlowerbound},
the notion of \emph{Hellinger distance} between measurable functions
$p,q : \Lambda \to [0,\infty)$ is useful; it is defined as
\begin{equation}
  d_H(p,q)
  \coloneqq \left(
              \int_{\Lambda}
                (\sqrt{p} - \sqrt{q})^2
              d \measureTheta
            \right)^{1/2}
  .
  \label{eq:HellingerDistance}
\end{equation}
This distance is related to the regular $L^2$ norm of the underlying horizon functions as follows.

\begin{lemma}\label{lem:equivalenceHellingerDensEstimationLearningProblem}
	Let $d \in \N_{\geq 2}$.
  For measurable $b_1,b_2 : [0,1]^{d-1} \to [0,1]$, it holds that  
	\begin{align}
		\big\| h_{b_1} - h_{b_2} \big\|_{L^2(\lebesgue)}^2
		= \frac{1}{2} d_H^2(p_{b_1},p_{b_2})
		\coloneqq \frac{1}{2} \bigl(d_H(p_{b_1},p_{b_2})\bigr)^2.
	\end{align}
\end{lemma}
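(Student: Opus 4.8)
The plan is to reduce both sides of the claimed identity to the quantity $\int_{[0,1]^d} |h_{b_1} - h_{b_2}| \, d\lebesgue$, using only the elementary fact that the horizon functions $h_{b_1}, h_{b_2}$ are $\{0,1\}$-valued.

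First I would note that, since $h_{b_1}(x), h_{b_2}(x) \in \{0,1\}$, the difference $h_{b_1}(x) - h_{b_2}(x)$ lies in $\{-1, 0, 1\}$, so that $\bigl(h_{b_1}(x) - h_{b_2}(x)\bigr)^2 = |h_{b_1}(x) - h_{b_2}(x)|$ for every $x$. Integrating over $[0,1]^d$ gives $\|h_{b_1} - h_{b_2}\|_{L^2(\lebesgue)}^2 = \int_{[0,1]^d} |h_{b_1} - h_{b_2}| \, d\lebesgue$.

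Next I would compute the integrand appearing in the Hellinger distance \eqref{eq:HellingerDistance}. From \eqref{eq:defOfDensities}, together with the observation that $h_b \in \{0,1\}$ implies $\sqrt{h_b} = h_b$ and $\sqrt{1 - h_b} = 1 - h_b$ pointwise, one obtains $\sqrt{p_b(x,1)} = \sqrt{2}\, h_b(x)$ and $\sqrt{p_b(x,0)} = \sqrt{2}\,\bigl(1 - h_b(x)\bigr)$. Consequently, for both $\iota \in \{0,1\}$ one finds $\bigl(\sqrt{p_{b_1}(x,\iota)} - \sqrt{p_{b_2}(x,\iota)}\bigr)^2 = 2\,\bigl(h_{b_1}(x) - h_{b_2}(x)\bigr)^2 = 2\,|h_{b_1}(x) - h_{b_2}(x)|$.

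Finally I would integrate over $\Lambda$ against $\measureTheta = \lebesgue \otimes \nu$. Since $\nu$ assigns mass $1/2$ to each point of $\{0,1\}$, the contributions coming from $\iota = 0$ and from $\iota = 1$ coincide, and summing them with weight $1/2$ yields $d_H^2(p_{b_1}, p_{b_2}) = 2 \int_{[0,1]^d} |h_{b_1} - h_{b_2}| \, d\lebesgue = 2\,\|h_{b_1} - h_{b_2}\|_{L^2(\lebesgue)}^2$, which is equivalent to the asserted equality. There is no real obstacle in this argument; the only point that requires a little care is bookkeeping the factor $2$ built into the definition of $p_b$ against the factor $1/2$ from the normalization of $\nu$.
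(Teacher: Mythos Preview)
Your proof is correct and follows essentially the same approach as the paper: both arguments exploit that $h_b$ is $\{0,1\}$-valued to trivialize the square roots in the Hellinger integrand (the paper phrases this as $\sqrt{p_b} = p_b/\sqrt{2}$ since $p_b \in \{0,2\}$, which is equivalent to your observation $\sqrt{p_b(x,\iota)} = \sqrt{2}\,h_b(x)$ or $\sqrt{2}\,(1-h_b(x))$), and then keep track of the factors of $2$ and $1/2$. Your detour through $\int |h_{b_1}-h_{b_2}|\,d\lebesgue$ is a harmless extra step.
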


\begin{proof}
  From the definition of $h_b, p_b$, it follows that $h_b$ only takes values in $\{ 0,1 \}$,
  so that $p_b$ only takes values in $\{ 0,2 \}$, and hence $\sqrt{p_b} = p_b / \sqrt{2}$
  for any measurable $b : [0,1]^{d-1} \to [0,1]$.
	Therefore,
	\begin{align}
		d_H^2(p_{b_1},p_{b_2})
		& = \int_{\Lambda}
          \left(\sqrt{p_{b_1}} - \sqrt{p_{b_2}} \right)^2
        d \measureTheta
		\nonumber \\
		& = \frac{1}{2}
        \int_{\Lambda}
          \left(p_{b_1} - p_{b_2} \right)^2
        d \measureTheta
		\nonumber \\
		& = \frac{1}{4}
        \int_{[0,1]^d}
          \bigl(p_{b_1}(x, 1) - p_{b_2}(x, 1) \bigr)^2
        d \lebesgue (x)
      \nonumber \\
		& \qquad
      + \frac{1}{4}
        \int_{[0,1]^d}
          \bigl(p_{b_1}(x, 0) - p_{b_2}(x, 0) \bigr)^2
        d \lebesgue (x)
		\nonumber \\
		& = \frac{1}{2}
        \int_{[0,1]^d}
          \big( 
            2 h_{b_1} (x) - 2 h_{b_2} (x)
          \big)^2
        d \lebesgue (x)
		\label{eq:thisistheonlynotsupertrivialstep} \\
    & = 2 \, \big\| h_{b_1} - h_{b_2} \big\|_{L^2(\lebesgue)}^2,
		\nonumber
	\end{align}
	where we used in \eqref{eq:thisistheonlynotsupertrivialstep} that 
  $p_{b_1}(x,1) - p_{b_2}(x,1) = 2 \, h_{b_1}(x) - 2 \, h_{b_2}(x)$ and
  \[
    p_{b_1}(x,0) - p_{b_2}(x,0)
    = \bigl(2 - 2 \, h_{b_1}(x)\bigr)
      - \bigl(2 - 2 \, h_{b_2}(x)\bigr)
    = 2 \, h_{b_2} (x) - 2 \, h_{b_1} (x)
    .
    \qedhere
  \]
\end{proof}

Now we have the following equivalence of the learning problem and the density estimation problem.

\begin{proposition}\label{prop:LowerLearningProblem}
	Let $d \in \N_{\geq 2}$,
  let $\emptyset \neq \CalC \subset \CalM([0,1]^{d-1}, [0,1])$,
  and let $\Lambda = [0,1]^d \times \{ 0,1 \}$
  and the measure $\measureTheta$ on $\Lambda$ as in \Cref{def:AssociatedDensities}.
  Denote by
  \begin{equation}
    \CalD
    := \CalD_\lambda
    := \bigg\{
         p : \Lambda \to [0,\infty)
         \,\,\colon\,\,
         p \text{ measurable and } \int_{\Lambda} p \, d \measureTheta = 1
       \bigg\}
    \label{eq:ProbabilityDensities}
  \end{equation}
  the set of all probability densities (with respect to the measure $\measureTheta$) on $\Lambda$,
  which we equip with the Borel $\sigma$-algebra induced by the norm $\| \cdot \|_{L^1(\measureTheta)}$.

	Then, 
	\begin{align}
    & \inf_{E : \Lambda^m \to \CalD \text{ measurable}} \,\,\,
        \sup_{p^* \in \mathcal{P}_\CalC} \,\,\,
          \mathbb{E}_{S = (S_1,\dots,S_m) \IIDsim p^\ast}
            d_H^2\bigl(E(S), p^*\bigr)
      \label{eq:DensityEstimationNonStrictError}
      \\
    & \sim \inf_{E : \Lambda^m \to \CalP_{\CalC} \text{ measurable}} \,\,\,
             \sup_{p^* \in \mathcal{P}_\CalC} \,\,\,
               \mathbb{E}_{S = (S_1,\dots,S_m) \IIDsim p^\ast}
                 d_H^2\bigl(E(S), p^*\bigr)
      \label{eq:DensityEstimationStrictError}
      \\
    & = 2 \, \eps_m^{\strict} (H_{\CalC})
      \sim \eps_m (H_{\CalC})
    .
		\label{eq:DensityEstimationEquivalence}
	\end{align}
\end{proposition}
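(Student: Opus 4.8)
The plan is to establish the chain of three relations in Proposition~\ref{prop:LowerLearningProblem} by working from the bottom up. The equality $2\,\eps_m^{\strict}(H_\CalC) = $ the quantity in \eqref{eq:DensityEstimationStrictError} is the conceptual heart of the argument, and the relation $\eps_m^{\strict}(H_\CalC) \sim \eps_m(H_\CalC)$ is exactly \Cref{lem:StrictEstimatorEquivalence} (applied with $\CalH = H_\CalC$, which is a subset of $L^2([0,1]^d;\{0,1\})$ since horizon functions are $\{0,1\}$-valued), so that last $\sim$ requires no new work. The first relation \eqref{eq:DensityEstimationNonStrictError} $\sim$ \eqref{eq:DensityEstimationStrictError} — passing from estimators valued in all of $\CalD$ to estimators valued in $\CalP_\CalC$ — is the density-estimation analogue of \Cref{lem:StrictEstimatorEquivalence}, and I would prove it by the same projection trick: one direction is trivial since $\CalP_\CalC \subset \CalD$, and for the other direction I would invoke \Cref{lem:ApproximateProjection} with $X = \CalD$ (separable, as a subset of $L^1(\measureTheta)$) and $M = \CalP_\CalC$, but with the subtlety that the relevant "distance" is the Hellinger distance $d_H$, not the $L^1(\measureTheta)$ norm in which $\CalD$ is topologized. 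Here I would observe that on the set of densities bounded by $2$ (which contains $\CalP_\CalC$ and the range of any reasonable $E$), the Hellinger distance is comparable to the $L^1$ distance up to constants — more precisely $d_H^2(p,q) \le \|p-q\|_{L^1}$ always, and conversely $\|p-q\|_{L^1} \le (\sqrt p + \sqrt q \text{ bounded}) \cdot d_H(p,q) \lesssim d_H(p,q)$ when $p,q \le 2$ — so the measurable approximate projection for $L^1$ is also an approximate projection for $d_H^2$ up to an absolute constant, and then the quadratic bookkeeping from the proof of \Cref{lem:StrictEstimatorEquivalence} (the inequality $(\eps + b)^2 \le \eps^2 + \eps + (1+\eps)b^2$) carries over verbatim.

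The central step, the equality $2\,\eps_m^{\strict}(H_\CalC) = $ \eqref{eq:DensityEstimationStrictError}, I would prove by exhibiting an explicit bijection between estimators for the two problems that preserves the relevant errors pointwise. In one direction, given a measurable $A \in \CalA_m(\Lambda, H_\CalC)$ for the strict learning problem, define $E \coloneqq p[\,\cdot\,] \circ A$, i.e. $E(S) = p_{b}$ where $h_b = A(S)$; this is a measurable map $\Lambda^m \to \CalP_\CalC$. Conversely, given measurable $E : \Lambda^m \to \CalP_\CalC$, since $b \mapsto p_b$ is a bijection from $\CalC$ onto $\CalP_\CalC$ (injective because $p_b$ determines $h_b$ by \eqref{eq:defOfDensities}, hence determines $b$), we recover $A(S) = h_b$ where $p_b = E(S)$; measurability of this inverse correspondence is the one point requiring a small argument, which I would handle by noting $h_b$ is recovered from $p_b$ via $h_b = p_b(\cdot,1)/2$ as an $L^1$-continuous operation. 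The key quantitative link is then \Cref{lem:equivalenceHellingerDensEstimationLearningProblem}: for any two boundary functions, $\|h_{b_1} - h_{b_2}\|_{L^2}^2 = \tfrac12 d_H^2(p_{b_1}, p_{b_2})$, equivalently $d_H^2(E(S), p^\ast) = 2\|A(S^?) - h\|_{L^2}^2$ when $p^\ast = p_b$ and $h = h_b$; and crucially, by \Cref{lem:LiftingMap} Part~2, the law of the sample $S = ((X_i, h(X_i)))_{i=1}^m$ under $X_i \IIDsim \lebesgue$ is precisely $(p_b\,d\measureTheta)^{\otimes m}$, i.e. $S \IIDsim p^\ast$. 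Thus the expectation $\EE_{X \sim \lebesgue^m}$ over the learning problem and the expectation $\EE_{S \IIDsim p^\ast}$ over the density problem are literally the same expectation, and after taking $\sup$ over $p^\ast \in \CalP_\CalC$ (equivalently $b \in \CalC$, equivalently $h \in H_\CalC$) and $\inf$ over estimators, the factor of $2$ from \Cref{lem:equivalenceHellingerDensEstimationLearningProblem} emerges as the only constant, giving the stated equality.

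I expect the main obstacle to be the measurability bookkeeping: one must check that the correspondences $A \leftrightarrow E$ genuinely preserve measurability in both directions (the map $f \mapsto p[f]$ is Lipschitz $L^1 \to L^1$ by \Cref{lem:LiftingMap} Part~1, which handles one direction cleanly, while the inverse $p_b \mapsto h_b = \tfrac12 p_b(\cdot,1)$ is also $L^1$-Lipschitz, handling the other), and that in the projection step for \eqref{eq:DensityEstimationNonStrictError} $\sim$ \eqref{eq:DensityEstimationStrictError} one may restrict attention to densities bounded by $2$ without loss — this is legitimate because replacing an arbitrary density-valued estimator $E$ by its composition with the (measurable, $d_H^2$-nonexpansive up to constants) approximate projection onto $\CalP_\CalC$ only helps, and elements of $\CalP_\CalC$ are bounded by $2$ by construction. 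None of these points is deep, but stating them carefully is where the proof's length will go; the rest is a direct transport of definitions through \Cref{lem:equivalenceHellingerDensEstimationLearningProblem}, \Cref{lem:LiftingMap}, and \Cref{lem:StrictEstimatorEquivalence}.
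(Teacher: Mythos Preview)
Your overall architecture matches the paper's: the final $\sim$ is \Cref{lem:StrictEstimatorEquivalence}; the equality \eqref{eq:DensityEstimationStrictError} $= 2\,\eps_m^{\strict}(H_\CalC)$ comes from the bijection $A \leftrightarrow E$ via $E(S) = p_b$ when $A(S) = h_b$ (with inverse $h_b = \tfrac12\, p_b(\cdot,1)$), \Cref{lem:equivalenceHellingerDensEstimationLearningProblem} supplying the factor $2$ and \Cref{lem:LiftingMap} Part~2 identifying the sample laws; your measurability remarks are correct and are exactly how the paper handles them.

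There is, however, a genuine gap in your argument for \eqref{eq:DensityEstimationNonStrictError} $\sim$ \eqref{eq:DensityEstimationStrictError}. If $\pi_\eps$ is an $L^1$-approximate projection onto $\CalP_\CalC$, the comparability you cite yields, for $p = E(S)$ and any $p^\ast \in \CalP_\CalC$,
\[
  d_H^2\bigl(p, \pi_\eps(p)\bigr)
  \le \|p - \pi_\eps(p)\|_{L^1}
  \le \eps + \dist_{L^1}(p, \CalP_\CalC)
  \le \eps + 2\, d_H(p, p^\ast),
\]
so $d_H\bigl(p, \pi_\eps(p)\bigr) \le \sqrt{\eps + 2\,d_H(p, p^\ast)}$ --- a \emph{square-root} rather than a linear bound in $d_H(p,p^\ast)$. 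Feeding this into the triangle inequality and squaring gives $\EE\,d_H^2(\pi_\eps \circ E, p^\ast) \lesssim \eps + \EE\,d_H(E, p^\ast) + \EE\,d_H^2(E, p^\ast)$, and after $\sup$, $\inf$, and $\eps \downarrow 0$ you obtain only that \eqref{eq:DensityEstimationStrictError} is bounded by a constant times the \emph{square root} of \eqref{eq:DensityEstimationNonStrictError}, not the claimed equivalence. The fix is to apply \Cref{lem:ApproximateProjection} \emph{directly with the Hellinger metric} on $\CalD$; then the proof of \Cref{lem:StrictEstimatorEquivalence} really does carry over verbatim. Your own inequalities already justify this: from $d_H^2 \le \|\cdot\|_{L^1}$ and $\|\cdot\|_{L^1} \le 2\,d_H$ on $\CalD$ (the latter holds for \emph{all} probability densities by Cauchy--Schwarz, since $\|\sqrt p + \sqrt q\|_{L^2}^2 \le 2\|p\|_{L^1} + 2\|q\|_{L^1} = 4$; your restriction to densities bounded by $2$ is unnecessary), the two metrics induce the same topology on $\CalD$, hence the same Borel $\sigma$-algebra, and separability transfers from $L^1$. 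The paper reaches the same conclusion by a slightly more indirect route, sandwiching Hellinger convergence between $L^1$-convergence and convergence in measure, which coincide on $\CalD$.
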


\begin{rem*}
  \begin{enumerate}
      \item As we show in \Cref{sub:ProbabilityDensitiesDifferentTopologies},
            the topology on $\CalD$ induced by $\| \cdot \|_{L^1(\measureTheta)}$
            is identical to the topology of convergence in measure; hence, also the two
            induced $\sigma$-algebras coincide.
      
      \item  Regarding the measurability of $E : \Lambda^m \to \CalP_{\CalC}$,
             like $\CalD$ we also equip $\CalP_{\CalC} \subset \CalD$
             with the Borel $\sigma$-algebra induced by $\| \cdot \|_{L^1(\measureTheta)}$.
  \end{enumerate}
\end{rem*}

\begin{proof}
  The estimate $\eqref{eq:DensityEstimationNonStrictError} \leq \eqref{eq:DensityEstimationStrictError}$
  is trivial.
  The proof of the reverse estimate is essentially identical to the proof of
  \Cref{lem:StrictEstimatorEquivalence} and is hence omitted.
  The only thing one has to note here is that the notion of convergence induced by the
  Hellinger distance $d_H$ (which is a metric) is weaker than convergence in $L^1(\measureTheta)$
  but stronger than convergence in measure.
  Since the latter two notions of convergence are equivalent on $\CalD$ (see the remark above
  and \Cref{sub:ProbabilityDensitiesDifferentTopologies}), this means that the Hellinger distance
  induces the same topology (and hence $\sigma$-algebra) on $\CalD$
  as the $\|\cdot\|_{L^1(\measureTheta)}$ norm.

  In particular, measurability of a function $\Phi : X \to \CalD$
  is independent of whether one equips $\CalD$ with the $\sigma$ algebra
  induced by $\|\cdot\|_{L^1}$ or by $d_H$.
  Moreover, since $(L^1(\measureTheta), \|\cdot\|_{L^1})$ is well-known to be separable
  (see for instance \cite[Proposition~3.4.5]{CohnMeasureTheory}),
  it follows that $(\CalD, \|\cdot\|_{L^1(\measureTheta)})$
  is separable, and hence so is $(\CalD, d_H)$, as required for the application of
  \Cref{lem:ApproximateProjection} in the proof of \Cref{lem:StrictEstimatorEquivalence}.

  \smallskip{}
  
  The statement $\eps_m^{\strict}(H_\CalC) \sim \eps_m (H_\CalC)$ in
  \Cref{eq:DensityEstimationEquivalence}
  follows directly from \Cref{lem:StrictEstimatorEquivalence}.
  In the rest of the proof, we thus prove that
  $\eqref{eq:DensityEstimationStrictError} = 2 \, \eps_m^{\strict}(H_\CalC)$.

	For given $A \in \mathcal{A}_m(\Lambda, H_{\CalC})$, we define the map $E_A$ by  
	\[
		E_A \colon \qquad
    \Lambda^m \to \mathcal{P}_\CalC, \qquad
		E_A(S) = p_{b[S]}, \quad \text{where} \quad A(S) = h_{b[S]} .
	\]
  Note that $E_A$ is well-defined, since $b$ is uniquely determined by $h_b$;
  see the discussion after \Cref{def:HorizonFunctions}.
	Now, \Cref{lem:equivalenceHellingerDensEstimationLearningProblem} shows for any
	$p = p_b \in \mathcal{P}_{\CalC}$ that
  \[
    d_H^2(E_A(S), p)
    = 2 \, \| h_{b[S]} - h_b \|_{L^2(\lebesgue)}^2
    = 2 \, \| A(S) - h_b \|_{L^2(\lebesgue)}^2,
  \]
	where we note again by \Cref{eq:defOfDensities} that $h_b = p(\bullet, 1) / 2$.

  Next, given $b \in \CalC$, note that $h_b = \Indicator_{M_b}$
  for $M_b = \{ x \in [0,1]^d \colon h_b(x) = 1 \}$.
  Hence, the second part of \Cref{lem:LiftingMap} shows that
  if $X \sim U([0,1]^d)$, then
  \[
    \bigl(
      X, h_b(X)
    \bigr)
    = \bigl(
        X, \Indicator_{M_b}(X)
      \bigr)
    \sim p[\Indicator_{M_b}] \, d \measureTheta
    =    p[h_b] \, d \measureTheta
    =    p_b \, d \measureTheta
    .
  \]
  Thus, if $X = (X_1,\dots,X_m) \IIDsim \lebesgue$ and $S = (S_1,\dots,S_m) \IIDsim p_b$,
  then $X^{h_b} \sim S$, where $X^{h_b}$ is as in Definition \ref{def:Estimator}.
	Hence, 
	\begin{align*}
		\sup_{p^* \in \mathcal{P}_\CalC} \,\,
      \mathbb{E}_{S = (S_1,\dots,S_m) \IIDsim p^*} \,\,
        d_H^2\bigl(E_A(S),p^*\bigr)
		& = 2 \,
        \sup_{b \in \CalC} \,\,
          \mathbb{E}_{S = (S_1,\dots,S_m) \IIDsim p_b} \,\,
            \big\| A(S) - h_b \big\|_{L^2 (\lebesgue)}^2 \\
		& = 2 \,
        \sup_{b \in \CalC} \,\,
          \mathbb{E}_{X = (X_1,\dots,X_m) \IIDsim \lebesgue} \,\,
            \big\| A(X^{h_b}) - h_b \big\|_{L^2 (\lebesgue)}^2 \\
		& = 2 \,
        \sup_{h \in H_{\CalC}} \,\,
          \mathbb{E}_{X = (X_1,\dots,X_m) \IIDsim \lebesgue} \,\,
            \big\| A(X^{h}) - h \big\|_{L^2 (\lebesgue)}^2
    .
	\end{align*}
  Therefore, we see that
  \[
    \eqref{eq:DensityEstimationStrictError}
    \leq 2 \,
         \sup_{h \in H_{\CalC}} \,\,
           \mathbb{E}_{X = (X_1,\dots,X_m) \IIDsim \lebesgue} \,\,
             \big\| A(X^{h}) - h \big\|_{L^2 (\lebesgue)}^2
  \]
  for any $A \in \CalA_m (\Lambda, H_{\CalC})$.
  Recalling the definition of $\eps_m^{\strict}(H_{\CalC})$ (see \Cref{def:Estimator}), we thus see
  that $\eqref{eq:DensityEstimationStrictError} \leq 2 \, \eps_m^{\strict}(H_{\CalC})$.

  To show that also
  $\eqref{eq:DensityEstimationStrictError} \geq 2 \, \eps_m^{\strict}(H_{\CalC})$,
  we observe for a given measurable $E \colon  \Lambda^m \!\to\! \mathcal{P}_{\CalC}$
  that we can define
  \[
    A_E : \qquad
    \Lambda^m \to H_{\CalC}, \qquad
    A_E (S) = [E(S)](\bullet,  1) / 2
    ,
  \]
  which is then also measurable.
  Then, writing $A_E (S) = h_{c[S]}$, we have $E(S) = p_{c[S]}$;
  hence, another application of \Cref{lem:equivalenceHellingerDensEstimationLearningProblem}
  shows for any $h = h_b \in H_{\CalC}$ that
  \[
    2 \, \| A_E (S) - h_b \|_{L^2(\lebesgue)}^2
    = 2 \, \| h_{c[S]} - h_b \|_{L^2(\lebesgue)}^2
    = d_H^2 (p_{c[S]}, p_b)
    = d_H^2 \bigl(E(S), p_b\bigr)
    .
  \]
  Thus, using the same arguments as above, we see
  \begin{align*}
    2 \, \eps_m^{\strict}(H_{\CalC})
    & \leq 2 \,
           \sup_{h \in H_{\CalC}}
             \EE_{X = (X_1,\dots,X_m) \IIDsim \lebesgue}
               \| A_E (X^h) - h \|_{L^2(\lebesgue)}^2 \\
    & =    2 \,
           \sup_{b \in \CalC}
             \EE_{X = (X_1,\dots,X_m) \IIDsim \lebesgue}
               \| A_E (X^{h_b}) - h_b \|_{L^2(\lebesgue)}^2 \\
    & =    2 \,
           \sup_{b \in \CalC}
             \EE_{X = (X_1,\dots,X_m) \IIDsim \lebesgue}
               d_H^2 \big( E(X^{h_b}), p_b \big) \\
    & =    2 \,
           \sup_{b \in \CalC}
             \EE_{S = (S_1,\dots,S_m) \IIDsim p_b}
               d_H^2 \big( E(S), p_b \big) \\
    & =    2 \,
           \sup_{p^\ast \in \CalP_{\CalC}}
             \EE_{S = (S_1,\dots,S_m) \IIDsim p^\ast}
               d_H^2 \big( E(S), p^\ast \big)
      .
  \end{align*}
	Taking the infimum over $E$ then yields the result.
\end{proof}

Quite sharp bounds for the density estimation problem in terms of the Hellinger distance
have been established in  \cite{YangBarronMinimaxRates}; we review these
in \Cref{thm:YangBarronMainBound} below.
To properly formulate these results, however, we first need to recall
the definitions of packing- and covering entropy numbers.

\begin{definition}\label{def:PackingEntropy}(cf.\ \cite[Definitions~1 and 2]{YangBarronMinimaxRates})
  Let $\CalX \neq \emptyset$ be a set and let $d : \CalX \times \CalX \to [0,\infty]$
  be a general \emph{distance function}, meaning that $d(x,x) = 0$ for all $x \in \CalX$
  and $d(x,y) > 0$ for $x,y \in \CalX$ with $x \neq y$.
  Note that we do \emph{not} assume that $d$ is symmetric
  or satisfies a form of the triangle inequality.
  
  For a set $\emptyset \neq K \subset \CalX$ and $\eps > 0$,
  we call a set $N_\eps \subset K$ an \emph{$\eps$-packing set in $K$}
  if for all $x_1, x_2 \in N_\eps$ with $x_1 \neq x_2$,
  it holds that $d(x_1,x_2) > \eps$.
  We define
  \[
    \Packing_{K,d} (\eps)
    := \max
       \big\{
         |N|
         \,\,\colon\,\,
         N \text{ is an $\eps$-packing set for } K
       \big\}
    \qquad \text{and} \qquad
     M_{K, d}(\eps)
     \coloneqq \ln
               \big(
                 \Packing_{K,d}(\eps)
               \big)
    .
  \]
  We call $M_{K, d}(\eps)$ the \emph{$\eps$-packing entropy}
  of $K$ (with respect to $d$).

  Moreover, we call a set $G_\eps \subset \CalX$ an \emph{$\eps$-net for $K$ (in $\CalX$)}
  if for every $x \in K$ there exists $y \in G_\eps$ satisfying $d(x,y) \leq \eps$.
  We define
  \[
    \Cover_{K,d}(\eps)
    := \min
       \big\{
         |G|
         \,\,\colon\,\,
         G \subset \CalX \text{ is an $\eps$-net for } K
       \big\}
    \qquad \text{and} \qquad
    V_{K, \CalX, d}(\eps)
    \coloneqq V_{K, d}(\eps)
    \coloneqq \ln
              \big(
                \Cover_{K,d}(\eps)
              \big).
	\]
    We call $V_{K, d}(\eps)$ the \emph{$\eps$-covering entropy}
    of $K$ (with respect to $d$ in $\CalX$).
\end{definition}

\begin{remark}\label{rem:PackingCoveringRemark}
  If the distance $d$ is in fact a metric
  (i.e., $d$ is symmetric and satisfies the triangle inequality),
  then it holds that
  \[
    M_{K,d}(2 \eps)
    \leq V_{K, \CalX, d} (\eps)
    \leq M_{K,d}(\eps)
    \qquad \forall \, \eps \in (0,\infty)
    ;
  \]
  see e.g.\ \cite[Page~1569]{YangBarronMinimaxRates}.
\end{remark}

Before we can properly state the results from \cite{YangBarronMinimaxRates},
we need to revisit one more notion, the so-called \emph{Kullback-Leibler divergence},
as introduced e.g.\ in \cite[Top of Page~1569]{YangBarronMinimaxRates}.
For two probability density functions $p,q \in \CalD_{\measureTheta}$
(with $\CalD_{\measureTheta}$ as in \Cref{eq:ProbabilityDensities}),
the \emph{Kullback-Leibler divergence of $p$ from $q$} is defined as
\[
  D( p \parallel q)
  := \int_{\Lambda}
       p \cdot \ln(p / q)
     \, d \measureTheta
  ,
\]
and the associated distance function is given by
\[d_{KL} (p,q) := \sqrt{D( p \parallel q)},\]
where we implicitly use that $D(p \parallel q) \geq 0$; this is well-known and
reviewed for completeness in \Cref{sub:KLHellingerRelation}.
We denote the associated covering entropy of a subset $\CalP \subset \CalD_\measureTheta$
in $\CalD_\measureTheta$ by $V_{\CalP, KL}(\eps)$.
Using these notions, the main result of \cite{YangBarronMinimaxRates} reads as follows:

\begin{theorem}\label{thm:YangBarronMainBound}(slightly simplified from \cite{YangBarronMinimaxRates})
  Let $(\Lambda,\measureTheta)$ be a probability space.
  Let the set $\CalD = \CalD_\measureTheta$ of probability density functions
  as defined in \Cref{eq:ProbabilityDensities},
  and let $\emptyset \neq \CalP \subset \CalD_\measureTheta$.
  
  Let $V,M : (0,\infty) \to [0,\infty)$ be continuous from the right and non-increasing,
  and with the following properties:
  \begin{enumerate}
      \item \label{enu:BarronKLEntropyCondition}
            $V_{\CalP, KL}(\eps) \leq V(\eps)$ for all $\eps > 0$;
      
      \item \label{enu:BarronEpsilonChoice}
            the sequence $(\eps_n)_{n \in \N} \subset (0,\infty)$ is chosen such that
            $\eps_n^2 = V(\eps_n) / n$ for all $n \in \N$;
            
      \item the packing entropy $M_{\CalP,H}$ of $\CalP$ with respect to the Hellinger distance $d_H$
            satisfies
            \[
              M(\eps) \leq M_{\CalP,H}(\eps) < \infty \qquad \forall \, \eps > 0;
            \]
            
      \item $M(\eps) > 2 \ln 2$ for $\eps > 0$ small enough;
      
      \item the sequence $(\widetilde{\eps}_n)_{n \in \N} \subset (0,\infty)$
            is chosen such that $M(\widetilde{\eps}_n) = 4 n \, \eps_n^2 + 2 \ln 2$
            for all $n \in \N$.
  \end{enumerate}
  
  Then
  \[
    \frac{1}{8} \widetilde{\eps}_m^2
    \leq \inf_{E : \Lambda^m \to \CalD \text{ measurable}} \,\,\,
           \sup_{p^* \in \CalP} \,\,\,
             \mathbb{E}_{S = (S_1,\dots,S_m) \IIDsim p^\ast}
               d_H^2\bigl(E(S), p^*\bigr)
    \leq 2 \, \eps_m^2
    \qquad \forall \, m \in \N
    .
  \]
  In fact, for the upper estimate, it suffices if only Conditions
  \ref{enu:BarronKLEntropyCondition} and \ref{enu:BarronEpsilonChoice} are satisfied.
\end{theorem}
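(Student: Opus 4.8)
The plan is to deduce the statement from the information-theoretic machinery of Yang and Barron \cite{YangBarronMinimaxRates}; our version only repackages theirs, feeding in the two ``capacity'' functions $V$ (an upper bound for $d_{KL}$-covering entropy) and $M$ (a lower bound for $d_H$-packing entropy) abstractly rather than extracting them from a metric-dimension hypothesis. The statement separates into an upper bound, governed by $\eps_m$ and Kullback--Leibler coverings, and a lower bound, governed by $\widetilde\eps_m$ and Hellinger packings, and I would treat the two halves independently. Both use two elementary facts: $d_H^2(p,q) \le D(p \parallel q)$ for densities $p,q$ (apply $\ln t \le t-1$ with $t = \sqrt{q/p}$), and the convexity of $q \mapsto D(p \parallel q)$.

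\textbf{Upper bound $\le 2\eps_m^2$ (using only Conditions~\ref{enu:BarronKLEntropyCondition} and~\ref{enu:BarronEpsilonChoice}).} Fix a finite $d_{KL}$-net $\{q_1,\dots,q_K\} \subset \CalD$ for $\CalP$ of radius $\eps_m$, so that $\ln K \le V_{\CalP,KL}(\eps_m) \le V(\eps_m) = m\eps_m^2$. Take $E$ to be the Bayes predictive mixture estimator with uniform prior on the net, i.e.\ $E(S) = \frac1m \sum_{t=0}^{m-1} \widehat q_{t+1}(\cdot \mid S_1,\dots,S_t)$ with $\widehat q_{t+1}(\cdot \mid S_{1:t}) \propto \sum_k q_k(\cdot)\prod_{s\le t} q_k(S_s)$. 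The classical telescoping (redundancy) bound gives, for every $p^\ast \in \CalP$, $\sum_{t=0}^{m-1} \EE_{S \IIDsim p^\ast} D\bigl(p^\ast \parallel \widehat q_{t+1}(\cdot\mid S_{1:t})\bigr) = D\bigl(p^{\ast\otimes m} \parallel Q_m\bigr) \le \ln K + m\min_k D(p^\ast\parallel q_k) \le 2m\eps_m^2$, where $Q_m$ denotes the mixture of the $q_k^{\otimes m}$. Convexity of $D(p^\ast\parallel\cdot)$ followed by $d_H^2 \le D$ then yields $\EE_{S \IIDsim p^\ast} d_H^2(E(S),p^\ast) \le 2\eps_m^2$ for all $p^\ast \in \CalP$; measurability of $E$ is clear since the posterior weights are continuous in $S$. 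This is precisely the ``in fact'' part of the statement.

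\textbf{Lower bound $\ge \tfrac18\widetilde\eps_m^2$.} I would run Fano's method. By the Hellinger-packing hypothesis there is a set $\{p_1,\dots,p_N\} \subset \CalP$ with $d_H(p_i,p_j) > \widetilde\eps_m$ for $i \neq j$ and $\ln N \ge M(\widetilde\eps_m)$; let $\theta \sim \mathrm{Unif}\{1,\dots,N\}$ and $S \mid \theta = j \IIDsim p_j$. Since $d_H$ is a metric, $d_H^2(E(S),p_\theta) < \widetilde\eps_m^2/4$ forces the nearest-neighbour test to output $\theta$, so Markov's and Fano's inequalities give $\frac{4}{\widetilde\eps_m^2}\EE\,d_H^2(E(S),p_\theta) \ge \PP(\widehat\theta \neq \theta) \ge 1 - \frac{I(\theta;S)+\ln 2}{\ln N}$. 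To control $I(\theta;S)$ I would reuse the $d_{KL}$-net: for any reference distribution $Q$ one has $I(\theta;S) \le \EE_\theta D(P_{S\mid\theta}\parallel Q)$, and taking $Q = \frac1K\sum_k q_k^{\otimes m}$, the domination $\frac1K\sum_k q_k^{\otimes m} \ge \frac1K q_{k(j)}^{\otimes m}$ by the net element $q_{k(j)}$ closest to $p_j$ gives $I(\theta;S) \le \ln K + m\max_j\min_k D(p_j\parallel q_k) \le m\eps_m^2 + m\eps_m^2 = 2m\eps_m^2$. Substituting $\ln N \ge M(\widetilde\eps_m) = 4m\eps_m^2 + 2\ln 2$ and $I(\theta;S) \le 2m\eps_m^2$ into Fano produces the factor $1 - \frac{2m\eps_m^2+\ln 2}{4m\eps_m^2 + 2\ln 2} = \frac12$, so $\EE\,d_H^2(E(S),p_\theta) \ge \frac18\widetilde\eps_m^2$; since the supremum over $p^\ast \in \CalP$ dominates the average over $\theta$, taking $\inf_E$ completes the proof.

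\textbf{Main obstacle.} The one genuinely delicate step is the mutual-information estimate in the lower bound: Hellinger separation does not by itself bound the Kullback--Leibler quantity that Fano needs, which is exactly why the statement must carry two distinct entropy functions and two distinct scales $\eps_m, \widetilde\eps_m$, and why the $d_{KL}$-covering has to be reintroduced as a reference mixture. Everything else --- existence of $\eps_m$ and $\widetilde\eps_m$ from right-continuity and monotonicity of $V, M$ (with $M > 2\ln 2$ near $0$ ensuring the latter), measurability of the estimators, and the two elementary inequalities above --- is bookkeeping that follows \cite{YangBarronMinimaxRates} essentially verbatim.
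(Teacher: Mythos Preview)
Your proposal is correct and follows exactly the Yang--Barron machinery that the paper invokes. The only difference is depth of exposition: the paper's proof is a one-line citation of \cite[Theorems~1 and~2]{YangBarronMinimaxRates} together with the observation that $d_H$ is a metric satisfying $d_H \le d_{KL}$ (verified in the appendix), whereas you actually unpack those two theorems---the Bayes predictive mixture estimator plus redundancy bound for the upper estimate, and Fano's inequality with the $d_{KL}$-net reference mixture for the lower estimate. Your identification of the ``main obstacle'' (that Hellinger separation alone does not control the mutual information, forcing the reintroduction of the $d_{KL}$-covering) is precisely the reason the theorem carries two entropy functions and why the paper needs the inequality $d_H \le d_{KL}$ as its only nontrivial input beyond the citation.
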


\begin{proof}
  This follows immediately from \cite[Theorems~1 and 2]{YangBarronMinimaxRates}
  upon noting that $d_H$ is a metric on $\CalD_\measureTheta$ satisfying
  $d_H \leq d_{KL}$; this latter condition is verified for completeness
  in \Cref{sub:KLHellingerRelation}.
\end{proof}

Typically, applying these results requires the densities $p \in \CalP$ to never vanish
(or at least to all have the same support), since otherwise $D (p \parallel q)$
might be infinite for certain $p,q \in \CalP$ and hence Condition \ref{enu:BarronKLEntropyCondition} of Theorem \ref{thm:YangBarronMainBound} does not hold.
For this reason, applications of the results in \cite{YangBarronMinimaxRates}
in the literature have mainly considered estimation problems
with measurements corrupted by Gaussian noise.
In contrast, our interest is in the case of \emph{noiseless} measurements.
Intriguingly, \cite[Lemma~2]{YangBarronMinimaxRates} offers a way out,
as shown in (the proof of) the following proposition.

\begin{proposition}\label{prop:LowerBoundEntropy}
  Let $d \in \N$ and let $\mathcal{P} \subset \CalD_{\measureTheta}$ be a set
  of probability densities defined on $\Lambda = [0,1]^d \times  \{ 0,1 \}$
  (with respect to the measure $\measureTheta$ from \Cref{def:AssociatedDensities}
  and $\CalD_\measureTheta$ as in \Cref{eq:ProbabilityDensities})
  such that $\|p\|_{L^\infty} \leq 2$ for all $p \in\mathcal{P}$.
  Assume further that the packing entropy $M_{\CalP,H}$ of $\mathcal{P}$
  under the Hellinger distance satisfies for all $0 < \eps < 1$ that
  \[
     \eps^{-1 / \beta} \ln_2^b(2 + \eps^{-1})
     \gtrsim M_{\mathcal{P}, H}(\eps)
     \gtrsim \eps^{-1 / \alpha} \big/ \ln^a(2 + \eps^{-1})
  \]
  for certain $\alpha \geq \beta > 0$ and $a,b \geq 0$.
  Then, it holds for all $m \in \N$ that
  \begin{align*}
  	& m^{-2\alpha / (2 \beta + 1)}
  	  \cdot [\ln(2 m)]^{-4\alpha (1 + \beta b) / (2\beta + 1) - 2\alpha a} \\
  	& \lesssim \inf_{E \colon \Lambda^m \to \mathcal{P} \text{ measurable}}
                  \sup_{p^* \in \mathcal{P}}
                    \mathbb{E}_{S = (S_1,\dots,S_m) \IIDsim p^*}
                      d_H^2(E(S), p^*) \\
  	& \lesssim m^{-2\beta / (2\beta + 1)}
               \cdot [\ln(2 m)]^{2(1 + \beta b) / (2\beta + 1)}.
  \end{align*}
\end{proposition}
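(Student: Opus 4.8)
The plan is to apply the Yang--Barron density-estimation bounds (\Cref{thm:YangBarronMainBound}) directly to the class $\mathcal{P}$ — admissible here because $(\Lambda,\measureTheta)$ is a probability space — after choosing a non-increasing, right-continuous majorant $V$ of the Kullback--Leibler covering entropy $V_{\mathcal{P},KL}$ and a non-increasing, right-continuous minorant $M$ of the Hellinger packing entropy $M_{\mathcal{P},H}$. For $M$ one may simply take $M_{\mathcal{P},H}$ itself: it is finite-valued (by the hypothesised upper bound on $(0,1)$ together with monotonicity on $[1,\infty)$), non-increasing and right-continuous as a packing entropy for a metric, and it tends to $\infty$ as $\eps\downarrow0$ by the hypothesised lower bound — so Conditions~3 and~4 of \Cref{thm:YangBarronMainBound} hold.

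The crux, and the step I expect to be the main obstacle, is Condition~1: exhibiting a \emph{finite} majorant of $V_{\mathcal{P},KL}$. Since the densities in $\mathcal{P}$ need not be bounded away from zero (for $\mathcal{P}_{\CalC}$ they even vanish on large sets), $D(p\parallel q)$ is generically $+\infty$, so the raw KL covering entropy of $\mathcal{P}$ is infinite and \Cref{thm:YangBarronMainBound} cannot be applied naively. This is precisely where the boundedness hypothesis $\|p\|_{L^\infty}\le2$ and \cite[Lemma~2]{YangBarronMinimaxRates} enter. The idea: take a Hellinger $\eps$-net $\{q_j\}\subset\mathcal{P}$ of cardinality $\le\exp\big(M_{\mathcal{P},H}(\eps)\big)$ (available by \Cref{rem:PackingCoveringRemark}, as $d_H$ is a metric) and regularise it to $\bar q_j\coloneqq(1-\delta)q_j+\delta\,\Indicator_{\Lambda}$, where $\Indicator_\Lambda\equiv1$ is the uniform density with respect to $\measureTheta$; then $\bar q_j\ge\delta$, so $\|p/\bar q_j\|_{L^\infty}\le2/\delta$ and hence $D(p\parallel\bar q_j)\le\chi^2(p\parallel\bar q_j)\lesssim\delta^{-1}d_H^2(p,\bar q_j)$ for every $p\in\mathcal{P}$, while $d_H^2(q_j,\bar q_j)\lesssim\delta$. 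The crude choice $\delta\asymp\eps$ already produces a finite KL-net, but only at scale of order $\sqrt{\eps}$ up to logarithms, which would yield the suboptimal rate $m^{-\beta/(\beta+1)}$; \cite[Lemma~2]{YangBarronMinimaxRates} refines the estimate — dyadically decomposing the region on which $q_j$ is small and exploiting that $d_H(p,q_j)\le\eps$ forces the measure of such a region, on which $p$ is not small, to be correspondingly tiny — so that $\{\bar q_j\}$ is in fact a KL-net for $\mathcal{P}$ at a scale exceeding $\eps$ only by a logarithmic factor. Tracking constants, this gives
\[
  V_{\mathcal{P},KL}(\eps)
  \;\le\; M_{\mathcal{P},H}\big(c\,\eps/\ln(2+\eps^{-1})\big)
  \;\lesssim\; \eps^{-1/\beta}\,\ln^{b+1/\beta}(2+\eps^{-1})
  \;\eqqcolon\; V(\eps),
\]
which is continuous and non-increasing, as required. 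This logarithmic bookkeeping, together with checking that the regularised net genuinely lies in the set of probability densities on $\Lambda$, is the technical heart of the argument.

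With $V$ and $M$ fixed, it remains to solve asymptotically the defining equations $\eps_m^2=V(\eps_m)/m$ and $M(\widetilde\eps_m)=4m\,\eps_m^2+2\ln2$ of \Cref{thm:YangBarronMainBound}. Both sequences are polynomially small in $m$, so every occurrence of $\ln(2+\eps_m^{-1})$ or $\ln(2+\widetilde\eps_m^{-1})$ is comparable to $\ln(2m)$; substituting this, the first equation gives $\eps_m^2\asymp m^{-2\beta/(2\beta+1)}[\ln(2m)]^{2(1+\beta b)/(2\beta+1)}$, and then the second gives $\widetilde\eps_m^2\asymp m^{-2\alpha/(2\beta+1)}[\ln(2m)]^{-4\alpha(1+\beta b)/(2\beta+1)-2\alpha a}$. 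By \Cref{thm:YangBarronMainBound}, up to the absolute constants $\tfrac18$ and $2$ these are a lower and an upper bound for $\inf_{E:\Lambda^m\to\CalD}\sup_{p^*\in\mathcal{P}}\EE_{S\IIDsim p^*}d_H^2(E(S),p^*)$. One then passes from $\CalD$-valued to $\mathcal{P}$-valued estimators: the lower bound transfers verbatim, since every $\mathcal{P}$-valued estimator is $\CalD$-valued, while the upper bound transfers up to a factor $4$ by the strict-versus-non-strict equivalence — valid for any metric (here $d_H$) and any separable target set (here $\mathcal{P}\subset\CalD$, with $\CalD$ separable) — exactly as in \Cref{lem:StrictEstimatorEquivalence} and the proof of \Cref{prop:LowerLearningProblem}. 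For the finitely many small $m$ for which $4m\,\eps_m^2+2\ln2$ lies outside the range of $M$, the claimed bounds are $\Theta(1)$ and the minimax risk is trivially at most $\sup_{p,q\in\mathcal{P}}d_H^2(p,q)\le4$, so nothing remains there.
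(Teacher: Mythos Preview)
Your approach is essentially the paper's: invoke \cite[Lemma~2]{YangBarronMinimaxRates} (using $\|p\|_{L^\infty}\le 2$) to convert a Hellinger net into a KL net at scale $\eps$ times a logarithm, obtain $V(\eps)\lesssim\eps^{-1/\beta}\ln^{b+1/\beta}(2+\eps^{-1})$, and then feed this together with a Hellinger-packing lower bound into \Cref{thm:YangBarronMainBound}. The asymptotic solutions you state for $\eps_m$ and $\widetilde\eps_m$ are exactly the ones the paper derives (the paper makes the $\eps_m$ step precise via \Cref{lem:EpsilonAsymptotic} and handles $\widetilde\eps_m$ by explicitly inverting $M$). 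You even cover a point the paper's proof glosses over, namely the passage from $\CalD$-valued to $\CalP$-valued estimators via the strict/non-strict equivalence.

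One small technical repair is needed. You take $M=M_{\CalP,H}$ itself, but packing numbers are integer-valued, so $M_{\CalP,H}$ is a (right-continuous) step function and Condition~5 of \Cref{thm:YangBarronMainBound}---which requires the \emph{equality} $M(\widetilde\eps_n)=4n\eps_n^2+2\ln2$---will typically have no solution. The paper avoids this by using the continuous minorant $M(\eps)=\bar c\,\eps^{-1/\alpha}/\ln^a(2+\eps^{-1})$, for which the intermediate value theorem applies; with this choice your subsequent asymptotics go through unchanged. (A minor aside: your heuristic that the ``crude'' mixture with $\delta\asymp\eps$ yields a KL net at scale $\sqrt{\eps}$ is not quite right---the $\chi^2$ bound you wrote gives only $d_{KL}=O(1)$---but this is immaterial since you correctly defer to \cite[Lemma~2]{YangBarronMinimaxRates} for the sharp estimate $d_{KL}\lesssim\eps\sqrt{\ln(1/\eps)}$.)
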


\begin{proof}
	The idea behind the proof was to some extent already outlined
	on \cite[Page~1579]{YangBarronMinimaxRates},
	but we will present the full argument here for the convenience of the reader.
	
	We first recall from \cite[Lemma~2]{YangBarronMinimaxRates}
	that for every density $g \in \CalD_\measureTheta$
	and every $\eps \in (0,\sqrt{2}]$, there exists a density
	$\tilde{g} = \tilde{g}(g, \eps) \in \CalD_\measureTheta$
	such that for all $f \in \CalD_\measureTheta$ with $\|f\|_{L^\infty} \leq 2$
	(and hence for all $f \in \mathcal{P}$)
	satisfying $d_H(f,g) \leq \eps$ it holds that 
	\[
      d_{KL}(f, \tilde{g})
      \lesssim \sqrt{ \eps^2 \cdot (1 + |\ln(\eps)|)}
      .
	\]
	As a result of this lemma, we have that for any
	Hellinger $\eps$-covering set $B_\eps \subset \CalD_\measureTheta$ of $\mathcal{P}$,
	the set $\widetilde{B}_\eps \coloneqq \{ \tilde{g}(g,\eps) \colon g \in B_\eps\}$
	is a $c \sqrt{\eps^2 \cdot (1 + \ln(1 / \eps))}$ covering set
	for $\CalP$ with respect to the Kullback--Leibler distance,
	where $c > 1$ is a suitable \emph{absolute} constant.
	
	Therefore, we obtain the following upper bound on the $\eps$ covering entropy of $\mathcal{P}$
	under the Kullback--Leibler distance, which we denote by $V_{\CalP,KL}(\eps)$:
	\[
      V_{\CalP,KL}\left(c\sqrt{\eps^2 \cdot (1 + \ln(1 / \eps))}\right)
      \leq V_{\CalP,H}(\eps)
      \leq  M_{\mathcal{P}, H}(\eps)
      \lesssim \eps^{-1/\beta} \ln^b(2 + \eps^{-1})
      \qquad \forall \, \eps \in (0,1]
      .
	\]
	We denote $\gamma(x) \coloneqq \sqrt{x^2 \cdot (1+\ln(1/x))}$ and observe that for $x < 1/2$
	\begin{align*}
		2 x \geq \gamma \big( x / (1 + \ln(1/x)) \big).
	\end{align*}
	Indeed,
	\begin{align*}
		\big[ \gamma(x/(1+\ln(1/x))) \big]^2
		= x^2 \frac{1+\ln(1/x) + \ln(1+\ln(1/x))}{(1+\ln(1/x))^2}
		\leq 2 x^2
		\leq (2 \, x)^2 ,
	\end{align*}
	since $\ln(z) \leq z$ for all $z \geq 1$.
	The covering entropy $V_{\CalP,KL}$ is a non-decreasing function.
	Therefore, we conclude that for all sufficiently small $\eps > 0$ it holds
	\begin{align}
		V_{\CalP,KL}(2 c \eps)
		& \leq      V_{\CalP,KL} \Big( c \cdot \gamma \big( \eps / (1 + \ln(1/\eps)) \big) \Big) \nonumber \\
		& \lesssim  M_{\mathcal{P}, H} \big( \eps / (1 + \ln(1/\eps)) \big) \nonumber \\
		& \lesssim \eps^{-1/\beta}
               \cdot \big( 1 + \ln(1 / \eps) \big)^{1/\beta}
               \cdot \ln^b \big( 2 + (1 + \ln(1 / \eps)) / \eps \big) \nonumber \\
		& \lesssim \eps^{-1/\beta}
               \cdot \big( 1 + \ln(1 / \eps) \big)^{1/\beta}
               \cdot \ln^b(1 + 1 / \eps) \label{eq:ThisStepShouldBeExplained} \\
		& \lesssim \eps^{-1/\beta} \cdot \ln^{1 / \beta + b} (1 + 1/\eps), \nonumber
	\end{align}
	where we used in \eqref{eq:ThisStepShouldBeExplained} that for $0 < \eps < 1/2$ it holds that 
	$1 + \ln(1 / \eps) \leq 1 / \eps$ and hence
	\[
      2 + \big( 1 + \ln(1 / \eps) \big) / \eps
      \leq 2 + 1 / \eps^2
      \leq (1 + 1 / \eps)^2.
	\]
	
	Hence, after substituting $\eps / 2 c$ for $\eps$, we have shown
	that there exist $\eps_0 > 0$ and $C^\ast > 0$ such that for all $\eps \in (0,\eps_0)$,
	it holds
	\[
	  V_{\CalP,KL}(\eps)
	  \leq C^\ast
	       \cdot \big( \max \{1, \eps^{-1} \} \big)^{1/\beta}
	       \cdot \ln^{1/\beta + b} (2 + \eps^{-1})
	  =:   V(\eps),
	\]
	and this estimate clearly also remains valid---after enlarging $C^\ast$ if necessary---%
	for $\eps \geq \eps_0$.
	Thus, by \Cref{lem:EpsilonAsymptotic} (which will be shown independently below),
	we obtain that there exists a sequence $(\eps_n)_{n\in \N}$
	such that $\eps_n^2 = V(\eps_n) / n$ and 
    \begin{equation}
      \eps_n
      \sim \left( \frac{\ln(2n)^{1 / \beta + b}}{n} \right)^{1 / (2 + 1 / \beta)}
      =    \left(\frac{1}{n}\right)^{\beta / (2\beta + 1)}
           [\ln(2n)]^{(1 + \beta b) / (2\beta + 1)}
      .
      \label{eq:EpsilonAsymptotic}
    \end{equation}
	From \Cref{thm:YangBarronMainBound}, we therefore obtain the upper bound 
	\begin{equation}\label{eq:TentativeUpperBoundFromBarronYang}
	  \begin{split}
	  & \inf_{E \colon \Lambda^m \to \mathcal{D} \text{ measurable}} \,\,
          \sup_{p^* \in \mathcal{P}} \,\,
            \mathbb{E}_{S = (S_1,\dots,S_m) \IIDsim p^*}
              d_H^2(E(S), p^*) \\
      & \leq 2 \eps_m^2
        \sim m^{-2\beta / (2\beta + 1)}
             [\ln(2 m)]^{2(1 + \beta b) / (2\beta + 1)}.
	  \end{split}
	\end{equation}
	
	In addition to the upper bound, \Cref{thm:YangBarronMainBound} yields a lower bound of
	\begin{equation}\label{eq:TentativeLowerBoundFromBarronYang}
    \inf_{E \colon \Lambda^m \to \mathcal{D} \text{ measurable}} \,\,
      \sup_{p^* \in \mathcal{P}} \,\,
        \mathbb{E}_{S = (S_1,\dots,S_m) \IIDsim p^*}
          d_H^2(E(S),p^*)
    \gtrsim \widetilde{\eps}_{m}^2, 
	\end{equation}
	where $\widetilde{\eps}_n$ satisfies $M(\widetilde{\eps}_n) = 4n \eps_n^2 + 2 \ln 2$
	for all $n \in \N$.
	Here, $M(\eps)$ is a (non-increasing, continuous) lower bound
	on the packing entropy number $M_{\mathcal{P}, H}(\eps)$.
	Naturally, given the hypothesis of the current proposition,
	we choose $M(\eps) = \bar{c} \cdot \eps^{-1 / \alpha} / \ln^a(2 + \eps^{-1})$
    for a suitable $\bar{c} > 0$.
    Note that $M$ is continuous and $M(\eps) \to \infty$ as $\eps \downarrow 0$
    and $M(\eps) \to 0$ as $\eps \to \infty$.
    Hence, by the intermediate value theorem, there exists for each $n \in \N$
    some $\widetilde{\eps}_n \in (0,\infty)$ satisfying
    $M(\widetilde{\eps}_n) = 4 n \eps_n^2 + 2 \ln 2$.
	Furthermore, note that for $\R^+ = (0,\infty)$ and
    \[
      \Gamma: \quad
      \R^+\to\R^+, \quad
      x \mapsto \bar{c}^{\alpha} \cdot x^{-\alpha} / \ln^{a\alpha}(2+x^{2\alpha})
    \]
    it holds that 
	\begin{align*}
		\Gamma(M(\eps))
      & = \Gamma \big( \bar{c} \cdot \eps^{-1 / \alpha} / \ln^a(2 + \eps^{-1}) \big) \\
	  & = \eps
          \ln^{\alpha a}(2 + \eps^{-1})
          / \ln^{a\alpha} \big( 2 + \bar{c}^{2\alpha} \eps^{-2} / \ln^{2\alpha a}(2 + \eps^{-1}) \big) \\
	  & \lesssim \eps  \ln^{\alpha a}(2 + \eps^{-1}) / \ln^{a\alpha}(2 + \eps^{-1})
      =        \eps
	\end{align*}
	where the penultimate step follows since
    $\bar{c}^{2\alpha} \eps^{-1} / \ln^{2\alpha a}(2 + \eps^{-1})$
    tends to infinity for $\eps \downarrow 0$.
	
	Now we obtain that for all $n \in \N$
	\begin{align*}
		\widetilde{\eps}_n
        \gtrsim \Gamma(M(\widetilde{\eps}_n))
		& = \Gamma\left(4 \, n \eps_n^2 + 2\ln 2 \right) \\
		({\scriptstyle \text{by \eqref{eq:EpsilonAsymptotic} and since } \Gamma(x) \sim \Gamma(y) \text{ if } x \sim y})
		& \sim \Gamma\left(2 \, n^{-2\beta/(2\beta+1) + 1} [\ln(2 n)]^{2(1+\beta b)/(2\beta+1)} + 2 \ln 2\right) \\
		& = \Gamma\left(2 \, n^{1/(2\beta+1)} [\ln(2 n)]^{2(1+\beta b)/(2\beta+1)} + 2 \ln 2\right) \\
		& 	\gtrsim n^{-\alpha / (2 \beta + 1)}
                [\ln(2 n)]^{-2\alpha (1+\beta b) / (2\beta + 1)} \big/ [\ln(2 + 2n)]^{\alpha a} \\
		&\gtrsim n^{-\alpha/(2 \beta+1)} [\ln(2 n)]^{-2\alpha (1+\beta b)/(2\beta+1) - \alpha a}.
	\end{align*}
	Combining this bound with \eqref{eq:TentativeLowerBoundFromBarronYang}
	and recalling \eqref{eq:TentativeUpperBoundFromBarronYang} completes the proof.
\end{proof}

To apply \Cref{prop:LowerBoundEntropy},
we need to find an estimate of $M_{\mathcal{P}_\CalC, H}(\eps)$.
The following lemma establishes a convenient relation of the packing entropy
of $\mathcal{P}_\CalC$ (as defined in \Cref{def:AssociatedDensities})
under the Hellinger distance to the packing entropy of $\CalC$ under the $L^1$ norm.

\begin{lemma}\label{lem:lowerboundEntropy}
	Let $d \in \N_{\geq 2}$ and $\CalC \subset C([0,1]^{d-1}, [0,1])$.
	Then the packing entropy of $\mathcal{P}_\CalC$ with respect to the Hellinger distance
	obeys for all $\eps > 0$:
	\[
    M_{\mathcal{P}_\CalC, H}(\eps)
    = M_{\CalC, L^1}\left(\eps^2 / 2\right), 
	\]
	where $M_{\CalC, L^1}$ denotes the packing entropy of $\CalC$ under the $L^1$ distance with the Lebesgue measure on $[0,1]^{d-1}$.
\end{lemma}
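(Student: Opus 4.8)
The plan is to exhibit an exact correspondence between the two packing problems, so that the claimed identity of entropies follows merely by reparametrizing the scale $\eps \mapsto \eps^2/2$. The starting point is \Cref{lem:equivalenceHellingerDensEstimationLearningProblem}, which already gives $d_H^2(p_{b_1}, p_{b_2}) = 2 \, \| h_{b_1} - h_{b_2} \|_{L^2(\lebesgue)}^2$ for all $b_1, b_2 \in \CalC$. Since $h_b$ takes values only in $\{0,1\}$, the difference $h_{b_1} - h_{b_2}$ takes values in $\{-1,0,1\}$, so $(h_{b_1} - h_{b_2})^2 = |h_{b_1} - h_{b_2}|$ pointwise, and hence $\| h_{b_1} - h_{b_2} \|_{L^2(\lebesgue)}^2 = \| h_{b_1} - h_{b_2} \|_{L^1(\lebesgue)}$.

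Next I would carry out a short Fubini computation relating $\| h_{b_1} - h_{b_2} \|_{L^1([0,1]^d)}$ to $\| b_1 - b_2 \|_{L^1([0,1]^{d-1})}$. Writing $x = (x', x_d)$ with $x' \in [0,1]^{d-1}$ and $x_d \in [0,1]$, and integrating first in $x_d$: for fixed $x'$ one has $h_{b_i}(x',\bullet) = \Indicator_{[b_i(x'),1]}$, so the integrand $\bigl|\Indicator_{b_1(x') \leq x_d} - \Indicator_{b_2(x') \leq x_d}\bigr|$ is the indicator of the symmetric difference of $[b_1(x'),1]$ and $[b_2(x'),1]$, which (as $b_1(x'),b_2(x') \in [0,1]$) is the interval between $\min\{b_1(x'),b_2(x')\}$ and $\max\{b_1(x'),b_2(x')\}$ and hence has length $|b_1(x') - b_2(x')|$; the behaviour at the endpoints is irrelevant since it affects only a null set. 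Integrating over $x'$ gives $\| h_{b_1} - h_{b_2} \|_{L^1([0,1]^d)} = \| b_1 - b_2 \|_{L^1([0,1]^{d-1})}$, and combining the three identities yields the key relation
\[
  d_H^2(p_{b_1}, p_{b_2}) = 2 \, \| b_1 - b_2 \|_{L^1([0,1]^{d-1})}
  \qquad \text{for all } b_1, b_2 \in \CalC .
\]

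Finally I would translate this into the statement about packing entropies. The map $\CalC \to \mathcal{P}_\CalC$, $b \mapsto p_b$, is a bijection: it is surjective by definition of $\mathcal{P}_\CalC$, and injective since $b \mapsto h_b$ is a bijection onto $H_\CalC$ and $h_b = p_b(\bullet,1)/2$ recovers $h_b$ from $p_b$ (note also that distinct continuous $b$ differ on a set of positive measure, so $\|\cdot\|_{L^1}$ is a genuine metric on $\CalC$ and $d_H$ a genuine metric on $\mathcal{P}_\CalC$, consistent with \Cref{def:PackingEntropy}). By the displayed relation, distinct $b_1,b_2 \in \CalC$ satisfy $d_H(p_{b_1},p_{b_2}) > \eps$ if and only if $\| b_1 - b_2 \|_{L^1} > \eps^2/2$; hence $N \subset \CalC$ is an $(\eps^2/2)$-packing set for $\CalC$ with respect to $\|\cdot\|_{L^1}$ if and only if $\{ p_b : b \in N \}$ is an $\eps$-packing set for $\mathcal{P}_\CalC$ with respect to $d_H$, and this correspondence preserves cardinality. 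Taking the supremum of $|N|$ over all such sets gives $\Packing_{\mathcal{P}_\CalC, d_H}(\eps) = \Packing_{\CalC, L^1}(\eps^2/2)$, and applying $\ln$ to both sides yields $M_{\mathcal{P}_\CalC, H}(\eps) = M_{\CalC, L^1}(\eps^2/2)$, as claimed. There is no genuine obstacle here; the only points requiring minor care are the harmless null-set behaviour in the Fubini step and verifying that the packing-set correspondence is cardinality-preserving, which is immediate from injectivity of $b \mapsto p_b$.
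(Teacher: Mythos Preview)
Your proposal is correct and follows essentially the same route as the paper: both combine \Cref{lem:equivalenceHellingerDensEstimationLearningProblem} with the identity $\|h_{b_1}-h_{b_2}\|_{L^2}^2 = \|b_1-b_2\|_{L^1}$ to obtain $d_H^2(p_{b_1},p_{b_2}) = 2\|b_1-b_2\|_{L^1}$, and then match packing sets bijectively. The only cosmetic difference is that the paper imports the $L^2$--$L^1$ identity from \cite[Lemma~B.1]{petersen2018optimal}, whereas you supply the short Fubini computation directly; your explicit check that $b\mapsto p_b$ is a cardinality-preserving bijection is a nice touch that the paper leaves implicit.
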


\begin{proof}
Let $\eps >0$, write $[k] := \{1,2,\dots,k\}$ for $k \in \N$, and let
  \[
     B_\eps \coloneqq \big\{ b_\ell \colon \ell \in [k] \big\},
  \]
  be an $\eps^2 / 2$-packing set of $\CalC$ with respect to the $L^1$ norm,
  with $b_\ell \neq b_j$ for $\ell \neq j$.
  By (an obvious generalization of) \cite[Lemma~B.1]{petersen2018optimal}
  it holds that for $b,b' \in \CalC$
  \begin{align*}
  	\|h_{b} - h_{b'}\|_{L^2}^2
  	= \|b - b'\|_{L^1}.
  \end{align*}
  Moreover, by \Cref{lem:equivalenceHellingerDensEstimationLearningProblem} we have that
  for all $b,b' \in \CalC$
  \[
     \|h_{b} - h_{b'}\|_{L^2}^2
     = \frac{1}{2} d_H^2(p_{b},p_{b'})
     .
  \]
  Hence, if $\ell, j \in [k]$ satisfy $\ell \neq j$, then
  \begin{align*}
  	\frac{1}{2} d_H^2(p_{b_{\ell}},p_{b_{j}})
  	= \| b_\ell - b_j \|_{L^1}
  	> \eps^2 / 2
  	\quad \text{and hence} \quad
  	d_H (p_{b_{\ell}},p_{b_{j}}) > \eps.
  \end{align*}
  Therefore, 
  \[
     N_\eps \coloneqq \big\{ p_{b_\ell} \colon \ell \in [k] \big\},
  \]
  is an $\eps$-packing set for $\mathcal{P}_\CalC$ with cardinality $k$.
  Thus, we easily see
  \[
     M_{\mathcal{P}_\CalC, H}(\eps)
     \geq M_{\CalC, L^1}\left(\eps^2 / 2\right).
  \]
  The inverse inequality can be obtained with the same argument
  but starting with an $\eps$ packing set for $\mathcal{P}_{\CalC}$ with respect to $d_H$.
\end{proof}

Based on the lemma above, we now obtain the following bound for the density estimation
problem for the class $\CalP_\CalC$, based on bounds for the $L^1$ packing entropy
of the underlying set of functions $\CalC$.

\begin{theorem}\label{thm:LowerBoundEntropyL1}
	Let $d\in \N_{\geq 2}$ and $\CalC \subset C([0,1]^{d-1}, [0,1])$.
	Assume further that for all $0 < \eps < 1$
	\[
    \eps^{-1 / \beta} \ln^b(2 + \eps^{-1})
    \gtrsim M_{\CalC, L^1}(\eps)
    \gtrsim \eps^{-1 / \alpha} / \ln^a(2 + \eps^{-1})
	\]
	for certain $\alpha \geq \beta >0$ and $a, b \geq 0$.
	Then,
	\begin{align*}
		& \quad m^{-\alpha/(\beta+1)}
		  \cdot [\ln(2 m)]^{ -\alpha (2 + \beta b) / (\beta+1) - \alpha a} \\
		& \lesssim \inf_{E : \Lambda^m \to \CalD \text{ measurable}} \,\,
		             \sup_{p^* \in \mathcal{P}_\CalC} \,\,
		               \mathbb{E}_{S = (S_1,\dots,S_m) \IIDsim p^\ast}
		                 d_H^2(E(S),p^*) \\
		& \lesssim m^{-\beta/(\beta+1)} \cdot [\ln(2 m)]^{(2 + \beta b)/(\beta+1)}.
	\end{align*}
\end{theorem}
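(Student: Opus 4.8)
The plan is to obtain \Cref{thm:LowerBoundEntropyL1} as an essentially immediate consequence of two results already in place: \Cref{lem:lowerboundEntropy}, which identifies the Hellinger packing entropy of $\CalP_\CalC$ with the $L^1$ packing entropy of $\CalC$ evaluated at $\eps^2/2$ instead of $\eps$, and \Cref{prop:LowerBoundEntropy}, which turns two-sided bounds on the Hellinger packing entropy of a family of densities bounded by $2$ into two-sided minimax bounds for the associated density estimation problem. The only real work is the bookkeeping of exponents under the substitution $\eps \mapsto \eps^2/2$.

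First I would record that $\CalC \subset C([0,1]^{d-1},[0,1]) \subset \CalM([0,1]^{d-1},[0,1])$, so $\CalP_\CalC$ (\Cref{def:AssociatedDensities}) is well-defined, and that every $p \in \CalP_\CalC$ satisfies $\|p\|_{L^\infty} \le 2$: by \eqref{eq:defOfDensities} the density $p_b$ takes only the values $0$ and $2$ because $h_b$ is $\{0,1\}$-valued. Hence \Cref{prop:LowerBoundEntropy} applies to $\CalP = \CalP_\CalC$ once Hellinger entropy bounds of the right shape are available. To produce those, I would invoke \Cref{lem:lowerboundEntropy}, namely $M_{\CalP_\CalC, H}(\eps) = M_{\CalC, L^1}(\eps^2/2)$, substitute the hypothesis of the theorem, and simplify the logarithmic factors via $\ln(2 + \eps^{-1}) \le \ln(2+2\eps^{-2}) \le \ln 2 + 2\ln(1+\eps^{-1}) \le 3\ln(2+\eps^{-1})$ for $\eps \in (0,1)$, i.e.\ $\ln(2+2\eps^{-2}) \asymp \ln(2+\eps^{-1})$. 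This yields, for all $\eps \in (0,1)$,
\[
  \eps^{-2/\beta}\,\ln^{b}(2+\eps^{-1})
  \;\gtrsim\; M_{\CalP_\CalC, H}(\eps)
  \;\gtrsim\; \eps^{-2/\alpha}\big/\ln^{a}(2+\eps^{-1}),
\]
which are exactly the hypotheses of \Cref{prop:LowerBoundEntropy} with the quadruple $(\alpha,\beta,a,b)$ replaced by $\bigl(\tfrac{\alpha}{2},\tfrac{\beta}{2},a,b\bigr)$; the required inequality $\tfrac{\alpha}{2}\ge\tfrac{\beta}{2}$ holds since $\alpha\ge\beta$, and the base of the logarithm appearing in \Cref{prop:LowerBoundEntropy} is immaterial up to constants.

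Finally I would feed these bounds into \Cref{prop:LowerBoundEntropy} and simplify the exponents: $\tfrac{2(\alpha/2)}{2(\beta/2)+1} = \tfrac{\alpha}{\beta+1}$, $\tfrac{4(\alpha/2)(1+(\beta/2)b)}{2(\beta/2)+1} = \tfrac{\alpha(2+\beta b)}{\beta+1}$, $2\cdot\tfrac{\alpha}{2}\cdot a = \alpha a$, and analogously with $\beta$ in place of $\alpha$ for the upper bound; this reproduces precisely the claimed two-sided estimate. The one bookkeeping caveat is that \Cref{prop:LowerBoundEntropy} is phrased with the infimum over $E:\Lambda^m\to\CalP$, whereas \Cref{thm:LowerBoundEntropyL1} uses the infimum over $E:\Lambda^m\to\CalD$; but its proof in fact establishes both estimates \eqref{eq:TentativeUpperBoundFromBarronYang} and \eqref{eq:TentativeLowerBoundFromBarronYang} for $\inf_{E:\Lambda^m\to\CalD}$, and in any case the equivalence $\eqref{eq:DensityEstimationNonStrictError}\sim\eqref{eq:DensityEstimationStrictError}$ from \Cref{prop:LowerLearningProblem} identifies the two quantities up to an absolute constant. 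I expect no genuine obstacle here: the only thing demanding care is tracking the logarithmic powers through the substitution $\eps\mapsto\eps^2/2$ (and, inside the proof of \Cref{prop:LowerBoundEntropy}, through the definition of the bounding function $V$).
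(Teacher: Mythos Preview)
Your proposal is correct and follows essentially the same route as the paper: apply \Cref{lem:lowerboundEntropy} to convert the $L^1$ packing entropy hypothesis on $\CalC$ into Hellinger packing entropy bounds on $\CalP_\CalC$ with halved exponents, then invoke \Cref{prop:LowerBoundEntropy} with parameters $(\alpha/2,\beta/2,a,b)$. You are in fact slightly more careful than the paper's own proof in two respects: you make the equivalence $\ln(2+2\eps^{-2})\asymp\ln(2+\eps^{-1})$ explicit, and you flag the $\CalP$ versus $\CalD$ mismatch between the statements of \Cref{prop:LowerBoundEntropy} and \Cref{thm:LowerBoundEntropyL1}, correctly resolving it by pointing to \eqref{eq:TentativeUpperBoundFromBarronYang}--\eqref{eq:TentativeLowerBoundFromBarronYang} (the paper's proof simply says ``the result now follows'' without comment).
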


\begin{proof}
	By \Cref{lem:lowerboundEntropy}, we have for all $0 < \delta < 1$ that
	\[
      \delta^{-1 / \beta} \ln^b(2 + \delta^{-1})
      \gtrsim M_{\mathcal{P}_\CalC, H}(\sqrt{2\delta})
      \gtrsim \delta^{-1 / \alpha} / \ln^a(2 + \delta^{-1})
	\]
	and hence
	\[
      \eps^{-2 / \beta} \ln^b(2 + \eps^{-1})
      \gtrsim M_{\mathcal{P}_\CalC, H}(\eps)
      \gtrsim \eps^{-2 / \alpha} / \ln^a(2 + \eps^{-1})
	\]
	for all $0< \eps < \sqrt{2}$.
  The result now follows by applying \Cref{prop:LowerBoundEntropy}.
\end{proof}

We close this section by transferring the upper and lower bounds from above
to the problem of learning horizon functions from noiseless point evaluations.

\begin{corollary}\label{cor:LowerLearningProblem}
	Let $d\in \N_{\geq 2}$ and $\CalC \subset C([0,1]^{d-1}, [0,1])$.
	Assume further that for all $0 < \eps < 1$
	\[
      \eps^{-1 / \beta} \cdot \ln^b(2 + \eps^{-1})
      \gtrsim M_{\CalC, L^1}(\eps)
      \gtrsim \eps^{-1 / \alpha} / \ln^a(2 + \eps^{-1})
	\]
	for certain $\alpha \geq \beta > 0$ and $a,b \geq 0$.
	Then, \eqref{eq:desiredlowerbound} holds with 
	\begin{align*}
		\kappa_1(m) &= C_1 \cdot  m^{-\alpha/(\beta+1)} \cdot [\ln(2 m)]^{-\alpha (2 + \beta b) /(\beta+1) - \alpha a}, \\
		\kappa_2(m) &= C_2 \cdot  m^{-\beta/(\beta+1)} \cdot [\ln(2 m)]^{(2 + \beta b)/(\beta+1)},
	\end{align*}
	for certain $C_1,C_2 > 0$.
\end{corollary}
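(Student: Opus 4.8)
The plan is to assemble the pieces already in place. First I would note that the quantity in the middle of \eqref{eq:desiredlowerbound} is, by definition, exactly $\eps_m(H_{\CalC})$ in the sense of \Cref{def:Estimator}: this applies because $H_{\CalC} \subset C([0,1]^{d-1};[0,1]) \hookrightarrow \CalM([0,1]^{d-1},[0,1])$ consists of Horizon functions, which are indicator functions, so $H_{\CalC} \subset L^2([0,1]^d;\{0,1\})$ as required. Thus the task reduces to bounding $\eps_m(H_{\CalC})$ from above by $\kappa_2(m)$ and from below by $\kappa_1(m)$, each up to an absolute multiplicative constant that may be absorbed into $C_1, C_2$.

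Next I would invoke \Cref{prop:LowerLearningProblem} with this $\CalC$. Its conclusion gives
\[
  \eqref{eq:DensityEstimationNonStrictError}
  \sim \eqref{eq:DensityEstimationStrictError}
  = 2 \, \eps_m^{\strict}(H_{\CalC})
  \sim \eps_m(H_{\CalC}),
\]
where every ``$\sim$'' hides only absolute constants independent of $m$ (the factor $4$ from \Cref{lem:StrictEstimatorEquivalence} and the trivial inequality $\eqref{eq:DensityEstimationNonStrictError}\le\eqref{eq:DensityEstimationStrictError}$). Hence $\eps_m(H_{\CalC})$ is, up to absolute constants and uniformly in $m$, the minimax density-estimation error $\inf_{E:\Lambda^m\to\CalD}\sup_{p^*\in\CalP_{\CalC}}\EE_{S\IIDsim p^*} d_H^2(E(S),p^*)$. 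Then I would apply \Cref{thm:LowerBoundEntropyL1}: its hypothesis is verbatim the two-sided bound on $M_{\CalC,L^1}(\eps)$ assumed in the present corollary, with the same $\alpha\ge\beta>0$ and $a,b\ge0$, so it applies directly and yields
\[
  m^{-\alpha/(\beta+1)} [\ln(2m)]^{-\alpha(2+\beta b)/(\beta+1) - \alpha a}
  \lesssim \eqref{eq:DensityEstimationNonStrictError}
  \lesssim m^{-\beta/(\beta+1)} [\ln(2m)]^{(2+\beta b)/(\beta+1)}
\]
for all $m\in\N$. Chaining this with the previous display and absorbing all absolute constants into suitable $C_1,C_2>0$ gives precisely $\kappa_1(m)\le\eps_m(H_{\CalC})\le\kappa_2(m)$ for all $m$, which is \eqref{eq:desiredlowerbound} with the stated $\kappa_1,\kappa_2$.

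I do not expect a genuine obstacle here: the corollary is a bookkeeping consequence of \Cref{prop:LowerLearningProblem} and \Cref{thm:LowerBoundEntropyL1}. The only point deserving a moment's care is to confirm that the equivalence constants produced along the chain (in particular in \Cref{prop:LowerLearningProblem}) are independent of $m$, so that they may legitimately be collected into the single constants $C_1, C_2$; this is immediate from the statements of the cited results. I would also double-check, as noted above, that $H_{\CalC}$ satisfies the structural hypotheses of \Cref{def:Estimator} (being a nonempty family of $\{0,1\}$-valued $L^2$ functions), which is clear since $\CalC\neq\emptyset$.
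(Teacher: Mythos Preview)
Your proposal is correct and follows exactly the paper's approach: combine \Cref{prop:LowerLearningProblem} (relating $\eps_m(H_{\CalC})$ to the density-estimation minimax error) with \Cref{thm:LowerBoundEntropyL1} (bounding that error via the $L^1$ entropy hypothesis on $\CalC$). One small slip: where you write ``$H_{\CalC} \subset C([0,1]^{d-1};[0,1])$'' you presumably mean $\CalC$, since $H_{\CalC}$ consists of horizon functions on $[0,1]^d$; this does not affect the argument.
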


\begin{proof}
	The result follows directly from combining \Cref{thm:LowerBoundEntropyL1}
	with \Cref{prop:LowerLearningProblem}.
\end{proof}

 \section{Sets with Barron-regular boundaries}
\label{sec:SetsWithBarronBoundary}

\subsection{Entropy bounds for the set of Barron functions}

Given $d \in \N$ and $C > 0$, we say that $f : [0, 1]^d \to \R$ is a
\alert{general Barron function with constant $C$},
if there exists a (complex-valued) measure
$\mu$ on $\R^d$ and some $c \in [-C,C]$ satisfying
\begin{equation}
  f(x)
  = c + \int_{\R^d} (e^{2 \pi i \langle x,\xi \rangle} - 1) \, d \mu(\xi)
  \text{ for all } x \in [0,1]^d
  \qquad \text{and} \qquad
  \int_{\R^d} |\xi| \, d |\mu| (\xi)
  \leq C .
  \label{eq:GeneralBarronDefinition}
\end{equation}
Here, $|\mu|$ is the total variation measure of $\mu$; see \cite[Section~6.1]{RudinRealComplex}.
We write $\GeneralBarron_d(C)$ for the set of all such functions.

Note that the general Barron class is indeed a generalization
of the Barron class of \Cref{def:IntroBarronClass} in the following sense.

\begin{lemma}\label{lem:generalBarronClassIsGeneral}
Let $d \in \N$ and $C > 0$, then $B_d(C) \subset \GeneralBarron_{d}(C)$.
\end{lemma}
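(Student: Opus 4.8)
The plan is to show that any $f \in B_d(C)$ can be rewritten in the form required by \eqref{eq:GeneralBarronDefinition}, essentially by a change of variables absorbing the factor $2\pi$ that distinguishes the two Fourier conventions. Concretely, suppose $f \in B_d(C)$, so that there is a measurable $F : \R^d \to \CC$ and $c \in [-C,C]$ with $f(x) = c + \int_{\R^d} (e^{i\langle x,\xi\rangle} - 1) F(\xi)\,d\xi$ on $[0,1]^d$ and $\int_{\R^d} |\xi|\,|F(\xi)|\,d\xi \le C$. I would introduce the substitution $\xi = 2\pi\eta$, so that $e^{i\langle x,\xi\rangle} = e^{2\pi i \langle x,\eta\rangle}$, and define the (complex-valued, absolutely continuous) measure $\mu$ on $\R^d$ by $d\mu(\eta) := (2\pi)^d F(2\pi\eta)\,d\eta$. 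Then the change-of-variables formula gives $f(x) = c + \int_{\R^d} (e^{2\pi i\langle x,\eta\rangle} - 1)\,d\mu(\eta)$ for all $x \in [0,1]^d$, which is the representation required in \eqref{eq:GeneralBarronDefinition}, with the same constant $c \in [-C,C]$.

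The remaining point is to verify the moment bound $\int_{\R^d} |\eta|\,d|\mu|(\eta) \le C$. Since $\mu$ is absolutely continuous with density $\eta \mapsto (2\pi)^d F(2\pi\eta)$, its total variation measure has density $\eta \mapsto (2\pi)^d |F(2\pi\eta)|$, so that $\int_{\R^d} |\eta|\,d|\mu|(\eta) = (2\pi)^d \int_{\R^d} |\eta|\,|F(2\pi\eta)|\,d\eta$. Substituting back $\xi = 2\pi\eta$ (so $d\xi = (2\pi)^d\,d\eta$ and $|\eta| = |\xi|/(2\pi)$), this equals $\int_{\R^d} \frac{|\xi|}{2\pi}\,|F(\xi)|\,d\xi = \frac{1}{2\pi}\int_{\R^d} |\xi|\,|F(\xi)|\,d\xi \le \frac{C}{2\pi} \le C$. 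Hence $f \in \GeneralBarron_d(C)$, which proves the inclusion.

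I do not anticipate any genuine obstacle here; the statement is essentially a bookkeeping exercise reconciling the two normalizations of the Fourier transform (one with $e^{i\langle x,\xi\rangle}$, the other with $e^{2\pi i\langle x,\xi\rangle}$), and the $2\pi$ rescaling happens to make the moment bound easier rather than harder (the extra factor $1/(2\pi) \le 1$ works in our favor). The only mild care needed is to note that the general Barron class of \eqref{eq:GeneralBarronDefinition} allows arbitrary complex measures $\mu$, whereas Definition \ref{def:IntroBarronClass} only produces measures absolutely continuous with respect to Lebesgue measure; since absolutely continuous measures are in particular complex measures, and the total variation of an absolutely continuous measure is computed by taking the absolute value of its density, this causes no difficulty. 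One should also briefly remark that the integrability $\int |\xi|\,|F(\xi)|\,d\xi \le C < \infty$ ensures $F \in L^1(\R^d)$ away from the origin and guarantees that $\mu$ is a well-defined finite complex measure, so all the integrals above are absolutely convergent and the change of variables is legitimate.
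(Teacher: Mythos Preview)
Your proposal is correct and follows essentially the same route as the paper: both perform the substitution $\xi = 2\pi\eta$, take $d\mu(\eta) = (2\pi)^d F(2\pi\eta)\,d\eta$, and observe that the first-moment bound improves by a factor $1/(2\pi) \le 1$. The paper's proof is slightly terser but otherwise identical in substance.
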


\begin{proof}
Let $f \in B_d(C)$.
Then there exists $c \in [-C,C]$ and a measurable $F \colon \R^d \to \mathbb{C}$
satisfying $\int_{\R^d} |\xi| \cdot |F(\xi)| \, d \xi \leq C$ such that for all $x \in [0,1]^d$
\begin{align*}
   f(x)
   & = c +  \int_{\R^d} \bigl(e^{i \langle x,\xi \rangle} - 1\bigr)  \cdot F(\xi) \, d \xi
     = c + \int_{\R^d}
             \bigl(e^{2 \pi i \langle x,\xi \rangle} - 1\bigr)
             \cdot (2\pi)^d F(2\pi \xi)
           \, d \xi,
\end{align*}
where the second equality follows by change of variables from $\xi$ to $2\pi \xi$.
We consider the measure $\mu_f \coloneqq (2\pi)^d F(2\pi \xi) \, d \xi$ and observe that 
\begin{align*}
  \int_{\R^d} |\xi| d |\mu_f|
  & = \int_{\R^d} |2 \pi \xi | (2\pi)^{d-1}   |F(2\pi \xi)| \, d \xi
    = \int_{\R^d} |\xi | (2\pi)^{-1}   |F(\xi)| \, d \xi
    \leq C / (2\pi)
    \leq C,
\end{align*}
where we applied a  change of variables of $\xi$ to $\xi/(2\pi)$.
The final inequality demonstrates that $f \in \GeneralBarron_{d}(C)$.
\end{proof}

Next, we show that one can replace the general complex measure $\mu$
in \Cref{eq:GeneralBarronDefinition} by a measure of the form $F(\xi) \, d \xi$,
where $F \in L_{w}^1(\R^d)$ for the weight $w : \R^d \to [1,\infty)$ given by $w(\xi) = 1 + |\xi|$
and the \alert{weighted Lebesgue space}
\[
  L_w^{1} (\R^d)
  := \bigl\{
       F : \R^d \to \CC
       \,\colon\,
       F \text{ measurable and }
       \| F \|_{L_w^1}
       := \| w \cdot F \|_{L^1}
       < \infty
     \bigr\} .
\]

\begin{proposition}\label{prop:MeasureFreeBarronCharacterization}
  For any $d \in \N$, there exists a constant $C_0 = C_0(d) > 0$ such that for every
  $C > 0$ and $f \in \GeneralBarron_d (C)$, there exists a function $F \in L_w^{1}(\R^d)$ satisfying
  \begin{equation}
    f(x)
    = \int_{\R^d} e^{2 \pi i \langle x,\xi \rangle} F(\xi) \, d \xi
    \text{ for all } x \in [0,1]^d
    \quad \text{and} \quad
    \| F \|_{L_w^1} \leq C_0 \cdot C .
    \label{eq:NiceBarronRepresentation}
  \end{equation}
\end{proposition}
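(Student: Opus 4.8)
The plan is to \emph{regularise} the (possibly non-absolutely-continuous, possibly locally-infinite) measure $\mu$ by convolving it with a fixed Schwartz function after subtracting off its point mass at the origin; on the physical side this corresponds to multiplying $f$ by a smooth compactly supported cutoff that equals $1$ on $[0,1]^d$, which leaves $f$ unchanged there. Concretely, fix once and for all a function $\varphi \in C_c^\infty(\R^d)$ with $\varphi \equiv 1$ on $[0,1]^d$, and set $\psi := \widehat{\varphi}$, which is Schwartz since $\varphi$ is. Given $f \in \GeneralBarron_d(C)$ with associated complex measure $\mu$ and constant $c \in [-C,C]$ as in \eqref{eq:GeneralBarronDefinition}, I would define the candidate
\[
  F(\xi) := c\,\psi(\xi) + \int_{\R^d} \bigl( \psi(\xi - \eta) - \psi(\xi) \bigr) \, d\mu(\eta), \qquad \xi \in \R^d .
\]

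First I would show that $F$ is well defined and lies in $L_w^1(\R^d)$ with $\|F\|_{L_w^1} \le C_0 C$, for a constant $C_0 = C_0(d)$ depending only on the ($d$-dependent) choice of $\varphi$. The key estimate is that, for every $\eta \in \R^d$,
\[
  \int_{\R^d} (1 + |\xi|) \, \bigl| \psi(\xi - \eta) - \psi(\xi) \bigr| \, d\xi \;\le\; K \, |\eta| ,
\]
with $K = K(d)$: for $|\eta| \le 1$ this follows by writing $\psi(\xi-\eta) - \psi(\xi) = - \int_0^1 \langle \nabla\psi(\xi - t\eta), \eta \rangle \, dt$, pulling $|\eta|$ out, substituting $\xi \mapsto \xi + t\eta$ (using $t|\eta| \le 1$), and invoking the rapid decay of $\nabla\psi$; for $|\eta| > 1$ it follows from the triangle inequality, a translation, and $\psi \in L_w^1 \cap L^1$. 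Then the triangle inequality and Tonelli give
\[
  \|F\|_{L_w^1} \le |c| \, \|\psi\|_{L_w^1} + \int_{\R^d} \Bigl( \int_{\R^d} (1+|\xi|) \, |\psi(\xi-\eta) - \psi(\xi)| \, d\xi \Bigr) d|\mu|(\eta) \le \|\psi\|_{L_w^1} \, C + K \int_{\R^d} |\eta| \, d|\mu|(\eta) \le \bigl( \|\psi\|_{L_w^1} + K \bigr) C ,
\]
using $\int |\xi| \, d|\mu| \le C$ and $|c| \le C$; so $C_0 := \|\psi\|_{L_w^1} + K$ works. Incidentally this same computation shows the integral defining $F(\xi)$ converges absolutely for each $\xi$.

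Next I would verify the representation \eqref{eq:NiceBarronRepresentation}, i.e.\ that $\int_{\R^d} e^{2\pi i \langle x,\xi\rangle} F(\xi) \, d\xi = f(x)$ for $x \in [0,1]^d$. The first term contributes $c \int_{\R^d} e^{2\pi i \langle x,\xi\rangle} \psi(\xi) \, d\xi = c\,\varphi(x) = c$, by Fourier inversion (valid since $\varphi, \widehat\varphi \in \Schwartz$) and $\varphi \equiv 1$ on $[0,1]^d$. For the second term, the absolute integrability just established justifies Fubini, and for each fixed $\eta$ the substitution $\xi \mapsto \xi + \eta$ yields $\int_{\R^d} e^{2\pi i \langle x,\xi\rangle} \bigl(\psi(\xi-\eta) - \psi(\xi)\bigr) \, d\xi = \bigl(e^{2\pi i \langle x,\eta\rangle} - 1\bigr)\,\varphi(x)$, which equals $e^{2\pi i \langle x,\eta\rangle} - 1$ for $x \in [0,1]^d$; integrating this against $d\mu(\eta)$ recovers $\int (e^{2\pi i \langle x,\eta\rangle} - 1) \, d\mu(\eta)$. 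Adding the two contributions gives $c + \int (e^{2\pi i \langle x,\eta\rangle} - 1) \, d\mu(\eta) = f(x)$, as desired.

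The one genuine subtlety --- and the reason both the ``$-1$'' inside \eqref{eq:GeneralBarronDefinition} and the extra ``$-\psi(\xi)$'' inside $F$ are indispensable --- is that $\mu$ is only assumed to satisfy $\int |\xi| \, d|\mu|(\xi) \le C$: it need not be a finite measure (its total variation may blow up near $\xi = 0$) and need not be absolutely continuous. Hence neither $\int e^{2\pi i \langle x,\xi\rangle} \, d\mu(\xi)$ nor the naive convolution $\psi * \mu$ is meaningful on its own; only the first-order differences $e^{2\pi i \langle x,\xi\rangle} - 1$ and $\psi(\xi-\eta) - \psi(\xi)$, which vanish linearly in the relevant variable near the origin, are integrable against $d|\mu|$. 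Keeping track of these cancellations and checking that every interchange of integrals is legitimate (which the bounds above ensure) is the only non-routine part of the argument.
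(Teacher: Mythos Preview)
Your proof is correct and in some ways cleaner than the paper's. Both arguments start from the same idea---multiply $f$ by a cutoff $\varphi\in C_c^\infty$ that is $\equiv 1$ on $[0,1]^d$, which on the Fourier side amounts to convolving the representing measure with $\psi=\widehat{\varphi}$---but the technical execution differs. The paper splits $\mu$ into its part on the unit ball and its part outside: on $\{|\eta|\ge 1\}$ the weighted moment bound forces $|\mu|$ to be finite, so the naive convolution $\widehat{\varphi}*\mu$ is directly in $L_w^1$; on $\{|\eta|<1\}$ the paper observes that the associated function $g(x)=\int_{|\eta|<1}(e^{2\pi i\langle x,\eta\rangle}-1)\,d\mu(\eta)$ is $C^\infty$ with uniformly bounded derivatives, so that $\varphi\cdot(c-\tilde c+g)$ has Fourier transform in $L_w^1$ by direct smoothness estimates. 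You bypass this dichotomy entirely by keeping the first-order cancellation $\psi(\xi-\eta)-\psi(\xi)$ intact and proving the uniform bound $\int(1+|\xi|)\,|\psi(\xi-\eta)-\psi(\xi)|\,d\xi\le K|\eta|$, which handles both regimes at once and feeds the hypothesis $\int|\eta|\,d|\mu|\le C$ straight in. This is shorter and makes the role of the moment condition more transparent.

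One minor correction: in the paper's setting $\mu$ is a complex measure in the sense of Rudin, which automatically has \emph{finite} total variation, so your remark that ``$\mu$ need not be a finite measure (its total variation may blow up near $\xi=0$)'' is not accurate here. This does not affect your argument at all---indeed, a virtue of your approach is that it would go through verbatim even for a signed/complex measure that is only locally finite away from $0$ with $\int|\eta|\,d|\mu|(\eta)<\infty$---but the stated subtlety is not present in the problem as posed.
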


\begin{proof}
  Fix a function $\varphi \in C_c^\infty(\R^d)$
  with $0 \leq \varphi \leq 1$ and $\varphi \equiv 1$ on a neighborhood of $[0,1]^d$,
  as well as $\supp \varphi \subset (-2,2)^d$.

  Let $f \in \GeneralBarron_d(C)$ and choose $c \in [-C,C]$ and a complex measure $\mu$ on $\R^d$
  with $\int_{\R^d} |\xi| \, d |\mu|(\xi) \leq C$ and
  $f(x) = c + \int_{\R^d} (e^{2 \pi i \langle x,\xi \rangle} - 1) \, d\mu(\xi)$
  for all $x \in [0,1]^d$.
  Note by \cite[Theorem~6.12]{RudinRealComplex} that there exists
  a measurable function $\sigma : \R^d \to \CC$ satisfying $|\sigma| \equiv 1$ and
  $d \mu = \sigma d |\mu|$.

  Now, writing $B := B_1(0) := \{ \xi \in \R^d \colon |\xi| \leq 1 \}$,
  define $g,h : \R^d \to \CC$ by
  \[
    g(x) := \int_{B} (e^{2 \pi i \langle x,\xi \rangle} - 1) \, d \mu(\xi)
    \qquad \text{and} \qquad
    h(x) := \int_{\R^d \setminus B} e^{2 \pi i \langle x,\xi \rangle} \, d \mu(\xi)
  \]
  and set
  \(
    \widetilde{c}
    := \int_{\R^d \setminus B} \, d \mu(\xi) \in \CC
    = \int_{\R^d \setminus B} \sigma(\xi) \, d |\mu|(\xi)
    ,
  \)
  where we note because of $|\sigma(\xi)| = 1 \leq |\xi|$ for $\xi \in \R^d \setminus B$ that
  $| \widetilde{c} | \leq \int_{\R^d \setminus B} |\xi| \, d |\mu|(\xi) \leq C$.

  We first claim that $g$ is smooth, with $|\partial^\alpha g(x)| \leq C_1 \cdot C$
  for all $x \in [-2,2]^d$ and $\alpha \in \N_0^d$ with $|\alpha| \leq d+2$,
  for a suitable constant $C_1 = C_1(d) > 0$.
  Indeed, in view of $|e^{i t} - 1| = |e^{i t} - e^{i 0}| \leq |t|$
  for $t \in \R$, we first see that $g$ is well-defined, with
  \[
    |g(x)|
    \leq \int_B
           |e^{2 \pi i \langle x,\xi \rangle} - 1|
           \cdot |\sigma(\xi)|
         \, d |\mu|(\xi)
    \leq \int_{\R^d}
           2 \pi |x| \, |\xi|
         \, d |\mu|(\xi)
    \leq 2 \pi C \, |x|
    \leq 4 \pi C \, \sqrt{d}
    \text{ for } x \in [-2,2]^d .
  \]
  Next, we use differentiation under the integral
  (as in \cite[Chapter~X, Theorem~3.18]{AmannEscherAnalysisIII}) to deduce that
  $g$ is smooth with partial derivative $\partial^\alpha g$
  (for $\alpha \in \N_0^d$, $\alpha \neq 0$) given by
  \[
    \partial^\alpha g(x)
    = \int_{B}
        (2 \pi i \xi)^\alpha e^{2 \pi i \langle x,\xi \rangle} \, \sigma(\xi)
      \, d |\mu|(\xi) .
  \]
  Since $|\xi^\alpha| \leq |\xi|^{|\alpha|} \leq |\xi|$ for $\alpha \in \N_0^d \setminus \{ 0 \}$
  and $\xi \in B$, this implies
  \[
    |\partial^\alpha g (x)|
    \leq \int_B
           (2 \pi)^{|\alpha|}
           |\xi|
         \, d |\mu|(\xi)
    \leq (2 \pi)^{d+2} C
    \quad \text{for } x \in \R^d
          \text{ and } \alpha \in \N_0^d \setminus \{ 0 \}
          \text{ with } |\alpha| \leq d+2.
  \]
  Overall, set $f_1 := \varphi \cdot (c - \widetilde{c} + g)$.
  Then a combination of the preceding estimates with the Leibniz rule
  (see \cite[Equation~(2.2.5)]{GrafakosClassicalFourier})
  shows that $f_1 \in C_c^{d+2}(\R^d)$ with $\supp f_1 \subset (-2,2)^d$ and
  $\| \partial^\alpha f_1 \|_{L^1} \leq C_2 \, C$ for $|\alpha| \leq d+2$, with $C_2 = C_2(d)$.
  By standard properties of the Fourier transform
  (see e.g.~\mbox{\cite[Proposition~2.2.11]{GrafakosClassicalFourier}}),
  this implies $\widehat{f_1} \in L_w^1 (\R^d)$ with $\| \widehat{f_1} \|_{L_w^1} \leq C_3 \, C$,
  for a suitable constant $C_3 = C_3(d) > 0$.

  \medskip{}

  Next, define
  \[
    H : \quad
    \R^d \to \CC, \quad
    \xi \mapsto \int_{\R^d \setminus B}
                  \widehat{\varphi}(\xi - \eta) \sigma(\eta)
                \, d |\mu|(\eta) .
  \]
  Then a computation using Tonelli's theorem and the estimate
  $1 + |\xi| \leq 1 + |\xi - \eta| + |\eta| \leq |\eta| \cdot (2 + |\xi - \eta|)$,
  which holds for $\xi \in \R^d$ and $\eta \in \R^d \setminus B$, shows that
  \begin{align*}
    \| H \|_{L_w^1}
    = \int_{\R^d}
        (1 + |\xi|) \cdot |H(\xi)|
      \, d \xi
    & \leq \int_{\R^d \setminus B}
             \int_{\R^d}
               (1 + |\xi|)
               \cdot |\widehat{\varphi}(\xi - \eta)|
             \, d \xi
           \, d |\mu|(\eta) \\
    & \leq \int_{\R^d \setminus B}
             |\eta|
             \int_{\R^d}
               (2 + |\xi - \eta|)
               \cdot |\widehat{\varphi}(\xi - \eta)|
             \, d \xi
           \, d |\mu|(\eta) \\
    & \leq 2 \| \widehat{\varphi} \|_{L_w^1} \cdot C
      \leq C_4 \cdot C
      <    \infty,
  \end{align*}
  for a suitable constant $C_4 = C_4(d) > 0$.
  Moreover, a computation using Fubini's theorem
  and the Fourier inversion formula (that is, $\Fourier^{-1} \widehat{\varphi} = \varphi$)
  shows
  \begin{align*}
    \int_{\R^d}
      H(\xi)
      e^{2 \pi i \langle x,\xi \rangle}
    \, d \xi
    & = \int_{\R^d}
          \int_{\R^d \setminus B}
            \widehat{\varphi} (\xi - \eta)
            \sigma(\eta)
            e^{2 \pi i \langle x, \xi - \eta \rangle}
            e^{2 \pi i \langle x, \eta \rangle}
          \, d |\mu|(\eta)
        \, d \xi \\
    & = \int_{\R^d \setminus B}
          \sigma(\eta)
          e^{2 \pi i \langle x, \eta \rangle}
          \int_{\R^d}
            \widehat{\varphi}(\xi - \eta)
            e^{2 \pi i \langle x, \xi - \eta \rangle}
          \, d \xi
        \, d |\mu|(\eta) \\
    & = (\Fourier^{-1} \widehat{\varphi})(x)
        \cdot \int_{\R^d \setminus B}
                e^{2 \pi i \langle x,\eta \rangle}
              \, d \mu(\eta)
      = \varphi(x) \cdot h(x) .
  \end{align*}

  Overall, setting $F := H + \widehat{f_1}$, we see that $F \in L_w^{1}(\R^d)$
  with $\| F \|_{L_w^1} \leq (C_3 + C_4) \cdot C$.
  Furthermore, using the Fourier inversion formula
  (which is applicable since $f_1 \in C_c(\R^d) \subset L^1(\R^d)$
  and $\widehat{f_1} \in L_w^1 (\R^d) \subset L^1 (\R^d)$), we see that
  \(
    \int_{\R^d}
      \widehat{f_1}(\xi) \, e^{2 \pi i \langle x,\xi \rangle}
    \, d \xi
    = \Fourier^{-1} \widehat{f_1}(x)
    = f_1(x)
  \)
  for all $x \in \R^d$.
  Since $\varphi(x) = 1$ for $x \in [0,1]^d$, we thus see
  \[
    \int_{\R^d}
      F(\xi) e^{2 \pi i \langle x,\xi \rangle}
    \, d \xi
    = \varphi(x) \cdot h(x) + f_1(x)
    = h(x) + g(x) + c - \widetilde{c}
    = c + \int_{\R^d} (e^{2 \pi i \langle x,\xi \rangle} - 1) \, d \mu(\xi)
    = f(x)
  \]
  for all $x \in [0,1]^d$, as claimed.
\end{proof}

\subsubsection{Upper bounds for the entropy numbers}%
\label{sub:BarronEntropyUpperBounds}

Using Proposition \ref{prop:MeasureFreeBarronCharacterization}, we now show that functions in the Barron space
can be represented via Fourier series with quickly decaying coefficients.
This observation was essentially already previously made in \mbox{\cite[Section~7.2]{CandesThesis}}
(for functions of the form \eqref{eq:NiceBarronRepresentation}), but without a detailed proof.
For this reason, we provide a comprehensive argument.

\begin{lemma}\label{lem:BarronSpaceFourierRepresentation}
  Let $d \in \N$ and consider the family $(e_n)_{n \in \Z^d}$
  of functions $e_n : \R^d \to \CC$ defined by
  $e_n := e^{2 \pi i \langle \frac{n}{2}, \bullet \rangle}$.
  Then for every sequence $c = (c_n)_{n \in \Z^d} \in \ell^2(\Z^d)$,
  the series $\sum_{n \in \Z^d} c_n e_n$ is unconditionally convergent in $L^2([0,1]^d)$
  with
  \begin{equation}
    \Big\| \sum_{n \in \Z^d} c_n \, e_n \Big\|_{L^2([0,1]^d)}
    \leq 2^{d/2} \, \| c \|_{\ell^2} .
    \label{eq:SynthesisBound}
  \end{equation}

  Furthermore, there is a constant $C_0 = C_0(d) > 0$ such that for every $C > 0$,
  every function $f \in \GeneralBarron_d (C)$ has a representation
  \[
    f = \sum_{n \in \Z^d} c_n^{(f)} \, e_n
    \text{ for suitable } c_n^{(f)} \in \CC
    \quad \text{satisfying} \quad
    \sum_{n \in \Z^d} (1 + |n|) \, \bigl|c_n^{(f)}\bigr|
    \leq C_0 \cdot C.
  \]
\end{lemma}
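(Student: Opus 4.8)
The plan is to treat the two assertions separately. The synthesis bound~\eqref{eq:SynthesisBound} is elementary: the functions $e_n$ are $2\Z^d$-periodic and pairwise orthogonal on the cube $[0,2]^d$, with $\|e_n\|_{L^2([0,2]^d)}^2 = 2^d$ (a one-line computation). Hence, for $c \in \ell^2(\Z^d)$ the partial sums of $\sum_n c_n e_n$ form a Cauchy net in $L^2([0,2]^d)$ regardless of the order of summation, with squared limit norm $2^d \|c\|_{\ell^2}^2$; since $[0,1]^d \subset [0,2]^d$ and the integrand is non-negative, this yields $\|\sum_n c_n e_n\|_{L^2([0,1]^d)} \le \|\sum_n c_n e_n\|_{L^2([0,2]^d)} = 2^{d/2}\|c\|_{\ell^2}$, and unconditional convergence in $L^2([0,2]^d)$ descends to $L^2([0,1]^d)$ because restriction is a bounded operator.

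For the representation, I would first invoke Proposition~\ref{prop:MeasureFreeBarronCharacterization} to obtain $F \in L_w^1(\R^d)$ with $\|F\|_{L_w^1} \le C_0 C$ and $f(x) = \int_{\R^d} e^{2\pi i \langle x,\xi\rangle} F(\xi)\,d\xi = (\Fourier^{-1} F)(x)$ for $x \in [0,1]^d$; note that $\Fourier^{-1}F$ is bounded and continuous on all of $\R^d$ since $F \in L^1$. Next, fix a cut-off $\chi \in C_c^\infty(\R^d)$ with $0 \le \chi \le 1$, $\chi \equiv 1$ on a neighbourhood of $[0,1]^d$, and $\supp\chi \subset (-\tfrac12,\tfrac32)^d$, and set $\Phi := \chi \cdot (\Fourier^{-1}F) \in C_c(\R^d)$, so that $\Phi = f$ on $[0,1]^d$ and $\supp\Phi$ is contained in the side-$2$ cube $Q := [-\tfrac12,\tfrac32]^d$. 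Because $Q$ has side length equal to the common period of the $e_n$, the family $\{2^{-d/2} e_n\}_{n \in \Z^d}$ is a complete orthonormal system in $L^2(Q)$; expanding $\Phi$ in it gives $\Phi = \sum_n c_n^{(f)} e_n$ in $L^2(Q)$, where $c_n^{(f)} := 2^{-d}\int_Q \Phi\,\overline{e_n} = 2^{-d}\,\widehat\Phi(n/2)$ with $\widehat\Phi(\zeta) = \int_{\R^d}\Phi(x)e^{-2\pi i\langle x,\zeta\rangle}\,dx$. Restricting to $[0,1]^d \subset Q$ and using $\Phi|_{[0,1]^d} = f$ then shows $f = \sum_n c_n^{(f)} e_n$ in $L^2([0,1]^d)$.

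The remaining task---and the technical heart of the proof---is to bound $\sum_n (1+|n|)\,|c_n^{(f)}| = 2^{-d}\sum_n (1+|n|)\,|\widehat\Phi(n/2)|$ by a constant multiple of $C$. A direct computation with Fubini's theorem (legitimate since $\chi, F \in L^1$) gives $\widehat\Phi = \widehat\chi * F$, i.e. $\widehat\Phi(n/2) = \int_{\R^d} \widehat\chi(\tfrac n2 - \eta)\,F(\eta)\,d\eta$. Since $\chi \in C_c^\infty$, the function $\widehat\chi$ is Schwartz and hence decays faster than $(1+|\cdot|)^{-(d+2)}$; combining this with the elementary inequality $1 + |n| \le 4(1+|\eta|)(1+|\tfrac n2 - \eta|)$ and with the uniform-over-translates summability $\sup_{w \in \R^d}\sum_{\zeta \in \frac12\Z^d + w} (1+|\zeta|)^{-(d+1)} < \infty$ yields $\sum_n (1+|n|)\,|\widehat\chi(\tfrac n2 - \eta)| \le A_d\,(1+|\eta|)$ for all $\eta \in \R^d$, with $A_d$ depending only on $d$. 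Inserting this into the convolution formula and applying Tonelli's theorem gives
\[
  \sum_n (1+|n|)\,|\widehat\Phi(n/2)|
  \;\le\; \int_{\R^d} |F(\eta)| \sum_n (1+|n|)\,|\widehat\chi(\tfrac n2 - \eta)|\,d\eta
  \;\le\; A_d \int_{\R^d} (1+|\eta|)\,|F(\eta)|\,d\eta
  \;=\; A_d\,\|F\|_{L_w^1}
  \;\le\; A_d\,C_0\,C ,
\]
so that $\sum_n (1+|n|)\,|c_n^{(f)}| \le 2^{-d}A_d C_0 \cdot C$; renaming the (dimension-only) constant as $C_0$ gives the claimed estimate. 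Finally, this bound forces $(c_n^{(f)}) \in \ell^2(\Z^d)$, so by the first part the series $\sum_n c_n^{(f)} e_n$ converges unconditionally in $L^2([0,1]^d)$, with sum $f$ by the previous paragraph. The main obstacle is precisely the uniform-in-$\eta$ lattice-sum estimate for $\widehat\chi$ (and the accompanying bookkeeping to keep all constants dimension-only); the Fourier-analytic identities---that $\widehat\Phi = \widehat\chi * F$ and that $\{e_n\}$ is complete on $Q$---are routine once the set-up is fixed.
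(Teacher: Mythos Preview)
Your proof is correct and follows essentially the same architecture as the paper's: fix a smooth cutoff supported in a side-$2$ cube containing $[0,1]^d$, multiply $\Fourier^{-1}F$ by it, and expand in the Fourier basis of that cube, identifying the coefficients as $2^{-d}\widehat\Phi(n/2)$ with $\widehat\Phi=\widehat\chi*F$.

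The one place where you and the paper diverge is the bookkeeping for $\sum_n(1+|n|)\,|\widehat\Phi(n/2)|$. You plug in the convolution formula directly, use the Peetre-type inequality $1+|n|\lesssim(1+|\eta|)(1+|\tfrac n2-\eta|)$, and then invoke the uniform-over-translates lattice summability of $(1+|\cdot|)^{-(d+1)}$. The paper instead introduces the local maximal function $M_Q g(x)=\sup_{q\in[0,1]^d}|g(x+q)|$, shows $M_Q H_w\le F_w*\zeta$ pointwise (with $H_w(\xi)=(1+|\xi|)|H(\xi)|$ and $\zeta(\xi)\asymp(1+|\xi|)^{-(d+1)}$), and then converts the lattice sum into $\int M_Q H_w$ via the averaging trick $H_w(\delta n)\le(M_Q H_w)(\delta n-q)$ for $q\in[0,\delta)^d$. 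Your route is arguably more elementary (no maximal function, just Tonelli and a standard lattice-sum bound); the paper's route has the minor advantage of giving a general inequality $\sum_n(1+|n|)|H(\delta n)|\le\delta^{-(d+1)}\|M_Q H_w\|_{L^1}$ valid for all $\delta\in(0,1]$, though only $\delta=\tfrac12$ is used. Either way the constants are dimension-only and the conclusion is the same.
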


\begin{proof}
  Let $Q := [0,1]^d$ and $P := [-\frac{1}{2}, \frac{3}{2}]^d$.
  Since $\bigl(2^{-d/2} \, e_n\bigr)_{n \in \Z^d} \subset L^2(P)$
  is an orthonormal basis, $\sum_{n \in \Z^d} c_n \, e_n$ converges unconditionally in $L^2(P)$
  and satisfies $\big\| \sum_{n \in \Z^d} c_n \, e_n \big\|_{L^2(P)} \leq 2^{d/2} \, \| c \|_{\ell^2}$
  for every $c = (c_n)_{n \in \Z^d} \in \ell^2(\Z^d)$.
  Since the restriction operator $L^2(P) \hookrightarrow L^2(Q), f \mapsto f|_Q$ has norm $1$,
  this proves the first claim.

  To prove the second claim, fix $\varphi \in C_c^\infty(\R^d)$ satisfying
  $\supp \varphi \subset (-\frac{1}{2}, \frac{3}{2})^d$
  and $\varphi \equiv 1$ on a neighborhood of $[0, 1]^d$.
  Let $C > 0$ and $f \in \GeneralBarron_d(C)$.
  By \Cref{prop:MeasureFreeBarronCharacterization}, there exists a constant $C_1 = C_1(d) > 0$
  and a function $F \in L_{w}^1(\R^d)$ with $\| F \|_{L_w^1} \leq C_1 \cdot C$ and such that
  $f(x) = \Fourier^{-1} F (x)$ for all $x \in [0, 1]^d$.
  We thus extend $f : [0,1]^d \to \R$ to a function $f : \R^d \to \CC$
  defined by $f(x) = \Fourier^{-1} F(x)$ for all $x \in \R^d$.
  Note that $f$ is continuous and bounded, with
  $|f(x)| \leq \| F \|_{L^1} \leq \| F \|_{L_w^1} \leq C_1 C$ for all $x \in \R^d$.

  Next, for any (continuous) function $g : \R^d \to \CC$, define $M_Q g : \R^d \to [0,\infty]$ by
  \[
    M_Q g(x)
    := \sup_{q \in Q} |g(x + q)|
    \qquad \text{for } x \in \R^d .
  \]
  Note for $\lambda \in \R$ that
  \[
    \bigl\{ x \in \R^d \colon M_Q g(x) > \lambda \bigr\}
    = \bigcup_{q \in Q} \bigl\{ x \in \R^d \colon |g(x+q)| > \lambda \bigr\}
  \]
  is open; hence, $M_Q g$ is measurable.

  Define $H := F \ast \widehat{\varphi}$.
  Note that $|\widehat{\varphi}(\xi)| \leq C_2 \cdot (1 + |\xi|)^{-(d + 2)}$
  and $(1 + |\xi + q|)^{-(d + 1)} \leq C_3 \cdot (1 + |\xi|)^{-(d + 1)}$
  for all $\xi \in \R^d$ and $q \in Q$ and suitable constants $C_2, C_3$
  depending only on $d$.
  Thus, using the elementary estimate
  \(
    1 + |\xi + q|
    = 1 + |\eta + \xi - \eta + q|
    \leq 1 + |\eta| + |\xi - \eta + q|
    \leq (1 + |\eta|) \cdot (1 + |\xi - \eta + q|)
  \)
  we see for arbitrary $q \in Q$ and $\xi \in \R^d$ that
  \begin{align*}
    (1 + |\xi + q|) \cdot |H(\xi+q)|
    & \leq \int_{\R^d}
             (1 + |\eta|)
             \cdot |F(\eta)|
             \cdot (1 + |\xi + q - \eta|)
             \cdot |\widehat{\varphi}(\xi + q - \eta)|
           \, d \eta \\
    & \leq (F_w \ast \zeta) (\xi)
  \end{align*}
  for $F_w (\xi) := (1 + |\xi|) \cdot |F(\xi)|$
  and $\zeta(\xi) := C_2 C_3 \cdot (1 + |\xi|)^{-(d + 1)}$.
  Since $F_w, \zeta \in L^1(\R^d)$, this implies for $H_w (\xi) := (1+|\xi|) \cdot |H(\xi)|$
  that
  \[
    \| M_Q H_w \|_{L^1}
    \leq \| F_w \ast \zeta \|_{L^1}
    \leq \| \zeta \|_{L^1} \cdot \| F \|_{L_w^1}
    \leq C_1 C_4 \cdot C
  \]
  for $C_4 := \| \zeta \|_{L^1}$.

  Therefore, given any $\delta \in (0,1]$, we see using the estimates
  $1 + |n| = 1 + \delta^{-1} |\delta n| \leq \delta^{-1} (1 + |\delta n|)$ and
  \(
    (1 + |\delta n|) \cdot |H(\delta n)|
    = H_w(\delta n)
    \leq (M_Q H_w)(\delta n - q)
  \)
  for $q \in [0,\delta)^d \subset Q$ that
  \begin{equation}
    \begin{split}
      \sum_{n \in \Z^d}
        (1+|n|) \cdot |H(\delta n)|
      & \leq \delta^{-d-1}
             \sum_{n \in \Z^d}
               \int_{[0,\delta)^d}
                 (1 + |\delta n|) \cdot |H(\delta n)|
               \, d q \\
      & \leq \delta^{-d-1}
             \sum_{n \in \Z^d}
               \int_{[0,\delta)^d}
                 (M_Q H_w)(\delta n - q)
               \, d q \\
      & =    \delta^{-(d+1)}
             \int_{\R^d} (M_Q H_w) (x) \, d x
        \leq \delta^{-(d+1)} C_1 C_4 \cdot C .
    \end{split}
    \label{eq:FourierCoefficientsSummability}
  \end{equation}

  Now, note that $f_0 := \Fourier^{-1} H = \Fourier^{-1} (F \ast \widehat{\varphi})$
  satisfies $f_0(x) = (\Fourier^{-1} F) (x) \cdot \varphi(x) = f(x) \cdot \varphi(x)$,
  thanks to the convolution theorem and the Fourier inversion theorem.
  On the one hand, this implies $\supp f_0 \subset P$,
  and on the other hand $f_0(x) = f(x)$ for all $x \in [0, 1]^d$.
  Note that, $f_0$ is bounded and compactly supported and hence integrable;
  therefore, the Fourier inversion theorem shows that $\widehat{f_0} = H$,
  since $H$ is continuous.
  Furthermore, we have
  \[
    \int_{P}
      f_0(x) \cdot 2^{-d/2} \, \overline{e_n(x)}
    \, d x
    = 2^{-d/2}
      \int_{\R^d}
        f_0(x) \cdot e^{-2 \pi i \langle x, \frac{n}{2} \rangle}
      \, d x
    = 2^{-d/2}
      \widehat{f_0} (n/2)
    = 2^{-d/2}
      H (n/2)
  \]
  and hence
  \(
    f_0
    = \sum_{n \in \Z^d}
        2^{-d/2} H(\frac{n}{2}) 2^{-d/2} e_n
    = 2^{-d}
      \sum_{n \in \Z^d}
        H(\frac{n}{2}) \, e_n
  \)
  as elements of $L^2(P)$, since $\bigl(2^{-d/2} e_n\bigr)_{n \in \Z^d}$ is an orthonormal basis
  of $L^2(P)$.
  Since $f = f_0|_{[0,1]^d}$, we thus see for $c_n := 2^{-d} \, H(\frac{n}{2})$
  that $f = \sum_{n \in \Z^d} c_n \, e_n$ as elements of $L^2([0,1]^d)$,
  where \Cref{eq:FourierCoefficientsSummability} (with $\delta = \frac{1}{2}$) shows
  \[
    \sum_{n \in \Z^d}
      (1 + |n|) \cdot |c_n|
    \leq 2^{-d}
         \sum_{n \in \Z^d}
           (1 + |n|) \, |H(\tfrac{1}{2} n)|
    \leq 2^{-d} 2^{d+1} C_1 C_4 \cdot C
    <    \infty .
  \]
  In particular, the series $\sum_{n \in \Z^d} c_n \, e_n$ converges uniformly.
  Since $f$ is continuous, this implies that the equality
  $f = \sum_{n \in \Z^d} c_n \, e_n$ holds pointwise and not only in $L^2([0,1]^d)$.
\end{proof}

By combining the preceding result with bounds for nonlinear approximation using the
discrete Fourier basis developed in \cite{DeVoreTemlyakovNonlinearApproximationTrigonometric},
we will obtain the following entropy bound for the class of Barron functions:

\begin{proposition}\label{prop:BarronEntropyUpperBound}
  Given $d \in \N$ and $C > 0$, there exists a constant $C_0 = C_0(d,C) > 0$
  such that the (covering) entropy numbers of $\GeneralBarron_d (C)$
  with respect to the $L^\infty$-norm on $[0,1]^d$ satisfy
  \[
    M_{\GeneralBarron_d (C), L^\infty}(\eps)
    \leq C_0 \cdot \eps^{-1 / (\frac{1}{2} + \frac{1}{d})} \cdot \big( 1 + \ln (1/\eps) \big)
    \qquad \forall \, \eps \in (0,1).
  \]
\end{proposition}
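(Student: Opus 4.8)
The plan is to combine the Fourier-series representation of general Barron functions from \Cref{lem:BarronSpaceFourierRepresentation} with the nonlinear trigonometric approximation results of DeVore and Temlyakov \cite{DeVoreTemlyakovNonlinearApproximationTrigonometric}, and then convert the resulting approximation rate into an entropy bound via a standard discretization argument. First I would record that, by \Cref{lem:BarronSpaceFourierRepresentation}, every $f \in \GeneralBarron_d(C)$ admits an expansion $f = \sum_{n \in \Z^d} c_n^{(f)} e_n$ with $\sum_{n} (1+|n|)\,|c_n^{(f)}| \leq C_0 C$; in particular the coefficient sequence lies in a fixed weighted $\ell^1$-ball. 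After the harmless linear change of variables $x \mapsto x/2$ (which maps the period lattice $\frac{1}{2}\Z^d$ to $\Z^d$ and only rescales constants), this says exactly that $f$, viewed as a function on the torus $\mathbb{T}^d$, has trigonometric Fourier coefficients in the weighted space $\ell^1_w$ with $w(n) = 1+|n|$. The key point is that membership in this weighted $\ell^1$-ball is precisely the hypothesis under which \cite{DeVoreTemlyakovNonlinearApproximationTrigonometric} gives greedy/best $N$-term trigonometric approximation in $L^\infty$: there exists a constant $c=c(d)$ so that keeping the $N$ largest coefficients yields a trigonometric polynomial $T_N$ with $\|f - T_N\|_{L^\infty} \leq c\,C\,N^{-1/2 - 1/d}$. (This is the content alluded to in the introduction as ``after a bit of work; see \Cref{lem:FourierSeriesEntropy} and \Cref{prop:BarronEntropyUpperBound}''; I would either cite an auxiliary lemma \texttt{lem:FourierSeriesEntropy} proved elsewhere in the paper, or reproduce the short deduction from the DeVore--Temlyakov estimate for the weighted-$\ell^1$ unit ball.)

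Next I would turn this $N$-term approximation into an $L^\infty$ covering bound for $\GeneralBarron_d(C)$ by the usual two-step discretization. Given the target accuracy $\eps \in (0,1)$, choose $N \asymp (\eps/(cC))^{-1/(1/2+1/d)}$, so that the best $N$-term error is at most $\eps/2$. Each approximant $T_N$ is determined by (i) a choice of which $N$ frequencies $n \in \Z^d$ are active, and (ii) the values of the $N$ corresponding complex coefficients. For (i), because the coefficients satisfy $\sum_n (1+|n|)|c_n^{(f)}| \leq C_0 C$, any coefficient of size at least the $N$-th largest must have $1+|n| \lesssim C_0 C / (\text{that size})$; combined with the $N^{-1/2-1/d}$ tail decay this confines all active frequencies to a ball of radius $R \lesssim N^{1/2}$ (polynomial in $N$), so the number of admissible frequency sets is at most $\binom{C' R^d}{N} \leq (C'' N^{d/2})^N$, whose logarithm is $O(N \log N)$. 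For (ii), on this frequency ball each coefficient has modulus $\leq C_0 C$, and it suffices to quantize each of the $N$ retained coefficients to accuracy $\eps/(2N)$ in $L^\infty$-relevant terms — more precisely, replacing $c_n$ by a grid point within $\eps/(2 \cdot 2^d N)$ changes the sup norm of $T_N$ by at most $\eps/2$ since $|e_n| \equiv 1$ — requiring $O((C_0 C N/\eps)^{2})$ grid points per coefficient, hence $O(N \log(N/\eps))$ total log-cardinality. Adding these two contributions, the $\eps$-covering of $\GeneralBarron_d(C)$ in $L^\infty$ has log-cardinality $O(N \log(N/\eps)) = O\!\big(\eps^{-1/(1/2+1/d)} \cdot (1 + \ln(1/\eps))\big)$, which is the claimed bound.

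The main obstacle I anticipate is getting the DeVore--Temlyakov greedy approximation statement into exactly the right normalized form: their results are phrased for general trigonometric systems and various smoothness/coefficient classes, and one must check that the $L^\infty$ (not $L^2$) best $N$-term rate $N^{-1/2-1/d}$ genuinely follows from the weighted-$\ell^1$ coefficient bound $\sum_n (1+|n|)|c_n| \leq C_0 C$, uniformly over $f$, with a dimension-dependent but otherwise absolute constant. This is the step the paper isolates as \texttt{lem:FourierSeriesEntropy}, so in the write-up I would simply invoke that lemma; absent it, the honest route is: observe that $\{c \in \ell^1(\Z^d) : \sum_n (1+|n|)|c_n| \leq 1\}$ embeds boundedly into the ``$A^\gamma$''-type approximation space for which \cite{DeVoreTemlyakovNonlinearApproximationTrigonometric} proves the $L^\infty$ rate, and quote their theorem. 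Everything else — the change of variables, the counting of frequency subsets, and the coefficient quantization — is routine, and I would present it tersely. Finally I would note that the stated bound is for $\GeneralBarron_d(C)$ and, via \Cref{lem:generalBarronClassIsGeneral}, immediately also covers $B_d(C)$.
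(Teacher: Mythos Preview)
Your overall strategy---Fourier representation via \Cref{lem:BarronSpaceFourierRepresentation}, DeVore--Temlyakov $N$-term approximation, then discretization of frequency support and coefficients---is precisely the paper's route, and your counting in steps~(i)--(ii) lands on the correct final bound. The change of variables $x\mapsto x/2$ is also a perfectly good way to reduce half-integer to integer frequencies.

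The gap is in the frequency-localization step. The DeVore--Temlyakov result (their Theorem~6.1) asserts the existence of an $n$-term approximant with $L^\infty$ error $O(n^{-1/2-1/d})$, but that approximant is \emph{not} obtained by keeping the $n$ largest Fourier coefficients of $f$. Greedy selection does not achieve this rate from the weighted-$\ell^1$ hypothesis alone: already in dimension one, $c_n\asymp n^{-2}\log^{-2}n$ gives $\sum_n(1+|n|)|c_n|<\infty$ but greedy $L^\infty$ error $\asymp N^{-1}\log^{-2}N$, not $N^{-3/2}$. Hence your inequality $1+|n|\lesssim C_0C/|c_n|$---which presupposes the approximant reuses $f$'s own coefficients---does not apply to the DeVore--Temlyakov approximant, and you have no a~priori polynomial bound on which frequencies it uses. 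Without such a bound, step~(i) has infinitely many candidate frequency sets and the counting argument breaks.

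The paper's \Cref{lem:FourierSeriesEntropy} closes this gap differently. One first truncates $f$ to the box $\|k\|_\infty\le N\asymp 1/\eps$ (tail error $\le(1+N)^{-1}$ from the weighted-$\ell^1$ bound), then applies DeVore--Temlyakov to the truncated polynomial $g$ to get an $n$-term approximant $h$. The frequencies of $h$ are still uncontrolled, so one convolves with the $d$-dimensional de~la~Vall\'ee Poussin kernel $V_N^d$: this forces the Fourier support of $F:=h\ast V_N^d$ into the fixed box $\{\|k\|_\infty\le 2N\}$, preserves the sparsity (since $\widehat{F}=\widehat{h}\cdot\widehat{V_N^d}$ has at most $n$ nonzero entries), and inflates the $L^\infty$ error only by $\|V_N^d\|_{L^1}\le 3^d$. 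After this relocalization your discretization goes through verbatim. So your fallback of simply invoking \Cref{lem:FourierSeriesEntropy} is correct; the ``short deduction'' you sketch in its place is not.
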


\begin{rem*}
  We remark that a somewhat similar bound can be derived from the results in
  \cite[Section~7.2]{CandesThesis}.
  Yet, \cite{CandesThesis} only considers entropy numbers and approximation in $L^2$,
  whereas we consider entropy numbers in $L^\infty$.
  Similarly, the results in \cite{SiegelSharpBoundsOnMetricEntropyOfShallowNN}
  also do not include the $L^\infty$ case.
  Therefore, to the best of our knowledge, the presented entropy bound is new.
\end{rem*}

For the proof, instead of the packing entropy numbers introduced in \Cref{def:PackingEntropy},
we will find it easier to work with so-called \emph{(external) covering entropy numbers}.
To define these, let $(\CalX,\dist)$ be a metric space, and let $\emptyset \neq K \subset \CalX$.
Given $\eps > 0$, we say that a subset $G \subset \CalX$ is an \emph{(external) $\eps$-net} for $K$
if $K \subset \bigcup_{x \in G} \overline{B}_\eps (x)$, where
$\overline{B}_\eps (x) = \{ y \in \CalX \colon \dist(y,x) \leq \eps \}$.
The \emph{(external) $\eps$ covering entropy number} $M^{\mathrm{ext}}_{K,\dist} (\eps) \in [0,\infty]$
is then defined as
\[
  M^{\mathrm{ext},X}_{K,\dist} (\eps)
  := \ln \big( \min \{ |G| \colon G \subset X \text{ is an external $\eps$-net for $K$} \} \big)
  .
\]
These newly introduced entropy numbers are related to the packing entropy numbers
introduced in \Cref{def:PackingEntropy} via the following (well-known) inequalities:
\begin{equation}
  M_{K,\dist}(\eps)
  \leq M_{K,\dist}^{\mathrm{ext},X}(\eps/2)
  \qquad \text{and} \qquad
  M_{K,\dist}^{\mathrm{ext},X}(\eps)
  \leq M_{K,\dist}(\eps)
  .
  \label{eq:PackingCoveringEntropyRelation}
\end{equation}
For completeness, we provide the short argument.
To prove the first inequality, let $G \subset X$ be an external $\frac{\eps}{2}$-net for $K$
and let $N \subset K$ be an $\eps$-packing set.
For each $x \in N \subset K$, there exists $n_x \in G$ such that $x \in \overline{B}_{\eps/2}(n_x)$.
We claim that $\varphi : N \to G, x \mapsto n_x$ is injective
(and hence $|N| \leq |G|$, which easily implies the claim).
To see this, note for $x,y \in N$ with $n_x = n_y$ that
$\dist(x,y) \leq \dist(x,n_x) + \dist(n_y,y) \leq \eps$
and hence $x = y$, since $N$ is $\eps$-packing.
To prove the second inequality in \Cref{eq:PackingCoveringEntropyRelation},
we can assume that $M_{K,\dist}(\eps) < \infty$.
Thus, we can choose an $\eps$-packing set $N \subset K$ of maximal (finite) cardinality.
We claim that $N$ is an external (in fact even internal) $\eps$-net for $K$.
To see this, note that if $x \in K \setminus \bigcup_{x \in N} \overline{B}_\eps(x)$,
then we have $x \notin N$, so that $\widetilde{N} := N \cup \{ x \}$ would be an
$\eps$-packing set for $K$ of strictly larger cardinality than $N$, which is impossible.

Using the relation \eqref{eq:PackingCoveringEntropyRelation}, we now prove the following
entropy bound for a certain class of Fourier series, from which we will then easily
obtain \Cref{prop:BarronEntropyUpperBound}.

\begin{lemma}\label{lem:FourierSeriesEntropy}
  Given $d \in \N$ and $C > 0$, define
  \[
    \CalF_{d,C}
    := \bigg\{
         \sum_{n \in \Z^d} c_n \, e^{2 \pi i \langle n, \bullet \rangle}
         \quad \colon \quad
         (c_n)_{n \in \Z^d} \in \CC^{\Z^d}
         \text{ with }
         \sum_{n \in \Z^d}
           (1 + \|n\|_\infty) \, |c_n|
         \leq C
       \bigg\}
    \subset L^\infty([0,1]^d)
    .
  \]
  Then there exists a constant $C_0 = C_0(d,C) > 0$ satisfying
  \[
    M_{\CalF_{d,C}, L^\infty([0,1]^d)}^{\mathrm{ext},L^\infty} (\eps)
    \leq C_0
         \cdot \eps^{-1 / (\frac{1}{2} + \frac{1}{d})}
         \cdot \big( 1 + \ln (1/\eps) \big)
    \qquad \forall \, \eps \in (0,1) .
  \]
\end{lemma}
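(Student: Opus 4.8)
The plan is to reduce the entropy bound for $\CalF_{d,C}$ to a known result on nonlinear (best $N$-term) approximation by the trigonometric system, specifically the results of DeVore and Temlyakov in \cite{DeVoreTemlyakovNonlinearApproximationTrigonometric}. The key observation is that the condition $\sum_n (1+\|n\|_\infty)|c_n| \leq C$ is a weighted $\ell^1$ constraint on the Fourier coefficients, and such constraints force rapid decay of the best $N$-term approximation error in $L^\infty$. Concretely, I would first show that for $f = \sum_n c_n e^{2\pi i \langle n,\bullet\rangle} \in \CalF_{d,C}$, the best approximation by a trigonometric polynomial with at most $N$ nonzero frequencies satisfies $\sigma_N(f)_{L^\infty} \lesssim N^{-(\frac12 + \frac1d)}$ (possibly up to a logarithmic factor), using that the coefficients of $f$ live in a weighted $\ell^1$ ball which embeds compactly into the relevant approximation space. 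This is precisely the kind of estimate extracted from \cite{DeVoreTemlyakovNonlinearApproximationTrigonometric}; the weight $(1+\|n\|_\infty)$ controls how many frequencies of a given magnitude can carry significant mass.

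The next step is the standard discretization argument turning an approximation rate into an entropy bound. Given $\eps > 0$, choose $N = N(\eps)$ so that the best $N$-term error is at most $\eps/2$; this gives $N \sim \eps^{-1/(\frac12 + \frac1d)}$ up to logs. Then every $f \in \CalF_{d,C}$ is within $\eps/2$ (in $L^\infty$) of some $g$ that is a trigonometric polynomial supported on $N$ frequencies, each frequency lying in a box of side $\lesssim$ some polynomial in $N$ (since the weighted $\ell^1$ bound forces $|c_n| \leq C/(1+\|n\|_\infty)$, so frequencies contributing more than $\eps/N$ must satisfy $\|n\|_\infty \lesssim CN/\eps$), and with coefficient vector of $\ell^1$-norm $\leq C$. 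I would then count the number of such $g$ needed to form an $\eps/2$-net: there are at most $\binom{(\text{box size})^d}{N}$ choices of frequency support, and for each, the coefficients range over a $2N$-real-dimensional $\ell^1$-ball of radius $C$, which admits an $\eps$-net of size $(C/\eps)^{O(N)}$. Taking logarithms, the dominant term is $N \ln(1/\eps) + N \ln(\text{box size}) \lesssim N \cdot \ln(1/\eps) \sim \eps^{-1/(\frac12+\frac1d)} (1 + \ln(1/\eps))$, which matches the claimed bound after using \eqref{eq:PackingCoveringEntropyRelation} to pass between internal and external covering numbers (a net of trig polynomials is an external net).

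The main obstacle I expect is making the first step — the $N$-term approximation rate — genuinely rigorous with the correct exponent $\frac{1}{2} + \frac{1}{d}$, since \cite{DeVoreTemlyakovNonlinearApproximationTrigonometric} states its results in terms of particular smoothness/interpolation spaces rather than directly for weighted $\ell^1$ balls of Fourier coefficients, so one has to identify the weighted $\ell^1$ class $\CalF_{d,C}$ with (or embed it into) the right space appearing in their theorem. The point is that $\sum_n (1+\|n\|_\infty)|c_n| \leq C$ is essentially a Besov-type condition $B^{1}_{1,1}$-flavored constraint, and DeVore--Temlyakov give $L^\infty$ approximation rates for such classes; the exponent $\frac12 + \frac1d$ arises from the interplay between the one derivative of smoothness and the $L^1 \to L^\infty$ gain in dimension $d$ (this is the same phenomenon behind Candès's $L^2$ rate $n^{-1/2-1/d}$ mentioned in the introduction, here upgraded to $L^\infty$). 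A secondary technical nuisance is bookkeeping the logarithmic factors and verifying that the frequency-support count does not contribute more than a single power of $\ln(1/\eps)$; I would handle this by being slightly wasteful and bounding $\ln\binom{K^d}{N} \leq N \ln(K^d) = dN\ln K$ with $K \lesssim \mathrm{poly}(1/\eps)$, so $\ln K \lesssim \ln(1/\eps)$ and the whole estimate collapses to $O(N \ln(1/\eps))$ as required. Once the approximation rate is in hand, the rest is routine counting.
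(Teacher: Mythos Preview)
Your approach is essentially the paper's: reduce to the DeVore--Temlyakov $n$-term $L^\infty$ approximation rate $n^{-(\frac12+\frac1d)}$, then count frequency supports and discretize coefficients. The one genuine technical step you gloss over is frequency localization of the \emph{approximant}. The DeVore--Temlyakov theorem hands you an $n$-term trigonometric polynomial $h$ with $\|g-h\|_{L^\infty}\lesssim n^{-\lambda}$, but gives no a priori control on which frequencies $h$ uses; your parenthetical argument (``frequencies contributing more than $\eps/N$ must satisfy $\|n\|_\infty\lesssim CN/\eps$'') bounds the significant frequencies of $f$, not of $h$, so it does not by itself justify restricting the support count to a box. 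The paper handles this by first truncating $f$ to $g=\sum_{\|k\|_\infty\le N} c_k e_k$ (error $\le \eps/3$ by the weighted $\ell^1$ bound), applying DeVore--Temlyakov to $g$, and then convolving the resulting $h$ with a $d$-fold de la Vall\'ee Poussin kernel $V_N^d$: since $\widehat{V_N^d}(j)=1$ for $\|j\|_\infty\le N$ one has $g\ast V_N^d=g$, and since $\|V_N^d\|_{L^1}\le 3^d$ the error inflates by at most $3^d$, while $F:=h\ast V_N^d$ now has all frequencies in $\{\|k\|_\infty\le 2N\}$ and is still $n$-sparse. After this fix, your counting goes through exactly as you sketched; the paper's Step~2 does it the same way (discretizing each retained coefficient in a disk of radius $2$ to accuracy $\eps/(3n)$).
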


\begin{proof}
  Given a fixed $\eps_0 = \eps_0(d,C) > 0$, it is easy to see that we can without loss of generality
  assume that $0 < \eps < \eps_0$.
  Moreover, it is easy to see that
  $M_{\CalF_{d,C},L^\infty}(\eps) = M_{\CalF_{d,1},L^\infty}(\eps / C)$.
  Therefore, it is enough to consider the case $C = 1$ and $\eps \in (0, \frac{1}{2})$.

  For a suitable constant $C_1 = C_1(d) \geq 1$ to be specified below, let
  \[
    N := \lceil 3 / \eps \rceil,
    \qquad
    \lambda := \frac{1}{2} + \frac{1}{d},
    \qquad \text{and} \qquad
    n := \bigg\lceil
           \Big(
             \frac{3^{d+1} C_1}{\eps}
           \Big)^{\! 1/\lambda} \,
         \bigg\rceil
    .
  \]
  Furthermore, set $I_N := \{ k \in \Z^d \colon \|k\|_\infty \leq 2 N \}$,
  let $(e_k)_{k \in \Z^d} = (e^{2 \pi i \langle k, \bullet \rangle})_{k \in \Z^d}$
  denote the Fourier orthonormal basis of $L^2([0,1]^d)$, and define
  \[
    \Phi : \quad
    \CC^{I_N} \to L^\infty([0,1]^d), \quad
    (c_j)_{j \in I_N} \mapsto \sum_{j \in I_N} c_j \, e_j .
  \]
  Given any subset $J \subset I_N$, let
  \[
    \Omega_J
    := \big\{
         (c_j)_{j \in I_N} \in \CC^{I_N}
         \quad\colon\quad
         \forall \, j \in I_N: \,\,\,
         |c_j| \leq 2 \cdot \Indicator_J (j)
       \big\}
    .
  \]

  \textbf{Step~1:} \emph{We show that for each $f \in \CalF_{d,1}$, there exists a subset
  $J \subset I_N$ with $|J| \leq n$ and some $c \in \Omega_J$
  satisfying $\| f - \Phi(c) \|_{L^\infty} \leq \frac{2}{3} \eps$.}

  \smallskip{}

  To show this, let $f = \sum_{k \in \Z^d} c_k \, e_k \in \CalF_{d,1}$ be arbitrary
  and define $g := \sum_{\| k \|_{\infty} \leq N} c_k \, e_k$.
  We then have 
  \[
    \| f - g \|_{L^\infty}
    \leq \sum_{\| k \|_\infty > N}
           |c_k|
    \leq (1 + N)^{-1}
         \sum_{\| k \|_\infty \geq N}
           (1 + \| k \|_\infty) \, |c_k|
    \leq (1 + N)^{-1}
    \leq \frac{\eps}{3}
    .
  \]
  Next, \cite[Theorem~6.1]{DeVoreTemlyakovNonlinearApproximationTrigonometric} shows
  (for a suitable choice of $C_1 = C_1(d) \geq 1$) that there exists a trigonometric polynomial $h$
  that is a linear combination of at most $n$ of the exponential functions $e_k$
  and such that $\| g - h \|_{L^\infty} \leq C_1 \cdot n^{-\lambda} \leq \frac{\eps}{3^{d+1}}$.

  As usual in Fourier analysis, we identify $L^2([0,1]^d) \cong L^2([0,1)^d) \cong L^2(\R^d/\Z^d)$
  and we denote the Fourier coefficients of $F \in L^1([0,1]^d)$
  by $\widehat{F}(k) = \int_{[0,1]^d} f(x) e^{-2 \pi i \langle k ,x \rangle} d x$ for $k \in \Z^d$.
  For $k \in \N$, let
  \[
    D_k(x) := \sum_{\ell=-k}^k e^{2 \pi i \ell x},
    \qquad
    F_k := \frac{1}{k} \sum_{\ell=0}^{k-1} D_\ell,
    \qquad \text{and} \qquad
    V_k := (1 + e_k + e_{-k}) \cdot F_k
  \]
  denote the (one-dimensional) Dirichlet kernel, Fejér kernel, and de la Vallée Poussin kernel,
  respectively.
  Extend these one-dimensional kernels to the $d$-dimensional kernels $D_k^d, F_k^d, V_k^d$
  where $D_k^d (x_1,\dots,x_d) := D_k(x_1) \cdots D_k(x_d)$ and similarly for the other kernels.
  It is well-known (see for instance \cite[Pages~9 and 15]{MuscaluSchlagHarmonicAnalysisI})
  that $\| F_k \|_{L^1([0,1])} = 1$ and that $\widehat{V_k}(j) = 1$ for $|j| \leq k$.
  Therefore, $\| V_k \|_{L^1([0,1])} \leq 3$ and $\| V_k^d \|_{L^1([0,1]^d)} \leq 3^d$
  and furthermore $\widehat{V_k^d}(j) = 1$ if $\| j \|_\infty \leq k$.
  This easily implies that
  \(
    \widehat{g}
    = \widehat{g} \cdot \widehat{\vphantom{V}\smash{V_N^d}}
    = \widehat{g \ast \vphantom{V}\smash{V_N^d}}
  \)
  and hence $g = g \ast V_N^d$.
  Furthermore, it is easy to see $V_N^d \in \linspan \{ e_\ell \colon \ell \in I_N \}$,
  and this implies (e.g.\ by considering the Fourier coefficients)
  that $F := h \ast V_N^d \in \linspan \{ e_\ell \colon \ell \in I_N \}$
  is a trigonometric polynomial with at most $n$ non-zero terms.
  That is, there exists a subset $J \subset I_N$
  such that $|J| \leq n$ and $F \in \linspan \{ e_\ell \colon \ell \in J \}$.
  Moreover,
  \[
    \| g - F \|_{L^\infty}
    = \| (g - h) \ast V_N^d \|_{L^\infty}
    \leq \| V_N^d \|_{L^1} \cdot \| g - h \|_{L^\infty}
    \leq \frac{\eps}{3}
  \]
  and thus
  \(
    \| f - F \|_{L^\infty}
    \leq \| f - g \|_{L^\infty} + \| g - F \|_{L^\infty}
    \leq \frac{2}{3} \eps
    .
  \)

  \smallskip{}

  Finally, note by definition of $\CalF_{d,1}$ that $|\widehat{f}(k)| \leq 1$ for all $k \in \Z^d$,
  which implies that
  \[
    |\widehat{F}(k)|
    \leq |\widehat{F}(k) - \widehat{f}(k)| + |\widehat{f}(k)|
    \leq \| F - f \|_{L^1} + 1
    \leq 2
    .
  \]
  Thus, we easily see that $F = \Phi (c)$ for a suitable sequence $c \in \Omega_J$.

  \medskip{}

  \textbf{Step~2:} \emph{We complete the proof.}
  By \cite[Corollary~4.2.13]{VershyninHighDimensionalProbability}, there exists a set
  $E \subset \overline{B}_2(0) \subset \CC$ satisfying
  $\overline{B}_2(0) \subset \bigcup_{z \in E} \overline{B}_{\eps/(3n)}(z)$---%
  that is, $E$ is an $\frac{\eps}{3 n}$-net for $\overline{B}_2(0)$---%
  and such that
  \[
    |E|
    \leq \Bigl(1 + \tfrac{2}{\eps/(3n)}\Bigr)^2
    \leq \Bigl(\tfrac{7 n}{\eps}\Bigr)^2
    .
  \]
  For $J \subset I_N$ with $|J| \leq n$, it is then easy to see that
  \[
    E_J
    := \big\{
         c \in \CC^{I_N}
         \quad\colon\quad
         \forall \, j \in I_N: \,\,
         c_j \in E \text{ if } j \in J
         \text{ and } c_j = 0 \text{ otherwise}
       \big\}
    \subset \Omega_J
  \]
  satisfies $|E_J| \leq (7 n / \eps)^{2 n}$ and that $E_J$ is an $\frac{\eps}{3 n}$-net for
  $\Omega_J$, where we consider the $\ell^\infty$-norm on $\Omega_J$.

  Now, given any $f \in \CalF_{d,1}$, the preceding step produces a subset $J \subset I_N$
  with $|J| \leq n$ and a sequence $c \in \Omega_J$
  such that $\| f - \Phi(c) \|_{L^\infty} \leq \frac{2}{3} \eps$.
  As just seen, we can choose $c' \in E_J$ satisfying
  $\| c - c' \|_{\ell^\infty} \leq \frac{\eps}{3 n}$ and this implies
  \[
    \| \Phi(c) - \Phi(c') \|_{L^\infty}
    \leq \sum_{j \in J}
           |c_j - c_j '| \, \| e_j \|_{L^\infty}
    \leq \frac{\eps}{3 n} |J|
    \leq \frac{\eps}{3} ,
  \]
  and hence $\| f - \Phi(c') \|_{L^\infty} \leq \eps$.

  Thus, setting $\mathscr{L} := \{ J \subset I_N \colon |J| \leq n \}$, we have shown that
  \(
    \CalF_{d,1}
    \subset \bigcup_{J \in \mathscr{L}}
              \bigcup_{c' \in E_J}
                 \overline{B}_\eps^{L^\infty} (\Phi(c')) ,
  \)
  and this implies
  \[
    M_{\CalF_{d,1}, L^\infty} (\eps)
    \leq \ln
         \bigg(
           \sum_{J \in \mathscr{L}}
             |E_J|
         \bigg)
    \leq \ln \bigl(|\mathscr{L}| \cdot (7 n / \eps)^{2 n}\bigr)
    .
  \]
  Next, note that $|I_N| = |\Z^d \cap [-2N, 2N]^d| \leq (1 + 4 N)^{d} \leq (5N)^d$ and
  \[
    n
    \leq 1 + (3^{d+1} C_1 / \eps)^{1/\lambda}
    \leq 2 \cdot \bigl(3^{d+1} C_1 / \eps\bigr)
    \leq \frac{C_2}{d} \eps^{-1/\lambda}
  \]
  for a suitable constant $C_2 = C_2(d,C_1,\lambda) = C_2(d) > 0$.
  Hence, \cite[Exercise~0.0.5]{VershyninHighDimensionalProbability} shows
  \[
    |\mathscr{L}|
    \leq \sum_{\ell=0}^{\min \{ n, |I_N| \}}
           \binom{|I_N|}{\ell}
    \leq \bigg(
           \frac{e \cdot |I_N|}{\min \{ n, |I_N| \}}
         \bigg)^{\min \{ n, |I_N| \}}
    \leq (e \cdot |I_N|)^n
    \leq (5 e N)^{C_2 \cdot \eps^{-1/\lambda}}
    .
  \]
  Therefore, and since $\eps \in (0,\frac{1}{2})$,
  so that $N \leq 1 + \frac{3}{\eps} \leq \frac{4}{\eps}$, we see
  \[
    \ln (|\mathscr{L}|)
    \leq C_2
         \cdot \eps^{-1/\lambda}
         \cdot \big(
                 \ln(5 e)
                 + \ln(4)
                 + \ln(1/\eps)
               \big)
    \leq C_3 \cdot \eps^{-1/\lambda} \cdot \bigl(1 + \ln(1/\eps)\bigr)
  \]
  for a suitable constant $C_3 = C_3(d) > 0$.
  Similarly, recalling from above that $n \leq \frac{C_2}{d} \eps^{-1/\lambda}$, we see
  \begin{align*}
    \ln \big( (7 n / \eps)^{2 n} \big)
    & \leq 2 n \cdot \big( \ln(7 n) + \ln(1/\eps) \big) \\
    & \leq C_2
           \cdot \eps^{-1/\lambda}
           \cdot \Big(
                   \ln(7 C_2)
                   + \frac{1}{\lambda} \ln(1/\eps)
                   + \ln(1/\eps)
                 \Big) \\
    & \leq C_4 \cdot \eps^{-1\lambda} \cdot \big( 1 + \ln(1/\eps) \big)
      .
  \end{align*}
  Overall, we have thus shown
  \(
    M_{\CalF_{d,1}, L^\infty} (\eps)
    \leq \ln \big( |\mathscr{L}| \cdot (7 n / \eps)^{2 n} \big)
    \leq (C_3 + C_4) \cdot \eps^{-1/\lambda} \cdot \big( 1 + \ln(1/\eps) \big)
  \)
  for $\eps \in (0, \frac{1}{2})$.
  As discussed at the beginning of the proof, this proves the claim.
\end{proof}

Finally, we prove \Cref{prop:BarronEntropyUpperBound}.

\begin{proof}[Proof of \Cref{prop:BarronEntropyUpperBound}]
  Let $C_0 = C_0(d) > 0$ as in \Cref{lem:BarronSpaceFourierRepresentation}.
  \Cref{lem:FourierSeriesEntropy} provides a constant $C_1 = C_1(C_0, C, d) = C_1 (C, d) > 0$
  such that
  \(
    M_{\CalF_{d,C_0 C}, L^\infty([0,1]^d)}^{\mathrm{ext}, L^\infty([0,1]^d)} (\eps)
    \leq C_1 \cdot \eps^{-1 / (\frac{1}{2} + \frac{1}{d})} \cdot \big( 1 + \ln(1/\eps) \big)
  \)
  for all $\eps \in (0,1)$.
  Thus, given $\eps \in (0,1)$, we can find an (external) $\frac{\eps}{4}$-net
  $G_\eps \subset L^\infty([0,1]^d)$ for $\CalF_{d,C_0 C}$ satisfying
  \[
    \ln |G_\eps|
    \leq C_1 \cdot (\eps/4)^{-1 / (\frac{1}{2} + \frac{1}{d})} \cdot \big( 1 + \ln(4/\eps) \big)
    \leq C_2 \cdot \eps^{-1 / (\frac{1}{2} + \frac{1}{d})} \cdot \big( 1 + \ln(1/\eps) \big)
  \]
  for a suitable constant $C_2 = C_2(C, d) > 0$.

  Now, setting $e_n := e^{2 \pi i \langle \frac{n}{2}, \bullet \rangle}$ for $n \in \Z^d$,
  given $f \in \GeneralBarron_d(C)$, \Cref{lem:BarronSpaceFourierRepresentation} shows that
  we can write $f = \sum_{n \in \Z^d} c_n \, e_n$ for $(c_n)_{n \in \Z^d} \subset \CC$
  satisfying $\sum_{n \in \Z^d} (1 + |n|) |c_n| \leq C_0 \, C$.
  Hence, setting
  \(
    f_1 := \sum_{n \in \Z^d} c_{2 n} e^{2 \pi i \langle n, \bullet \rangle}
  \)
  and
  \(
    f_2 := \sum_{n \in \Z^d} c_{2 n + 1} e^{2 \pi i \langle n, \bullet \rangle}
    ,
  \)
  we have $f_1, f_2 \in \CalF_{d, C_0 C}$ and $f = f_1 + e_1 \, f_2$.
  Thus, we see for suitable $g_1, g_2 \in G_\eps$ that
  \[
    \| f - (g_1 + e_1 \, g_2) \|_{L^\infty}
    \leq \| f_1 - g_1 \| + \| e_1 \|_{L^\infty} \cdot \| f_2 - g_2 \|_{L^\infty}
    \leq \frac{\eps}{4} + 1 \cdot \frac{\eps}{4}
    =    \frac{\eps}{2}
    .
  \]
  Overall, this shows that
  $\widetilde{G}_\eps := \{ g + e_1 \, h \colon g,h \in G_\eps \} \subset L^\infty([0,1]^d)$
  is an (external) $\frac{\eps}{2}$-net for $\GeneralBarron_d (C)$ satisfying
  \[
    \ln |\widetilde{G}_\eps|
    \leq \ln |G_\eps|^2
    =    2 \, \ln |G_\eps|
    \leq 2 C_2 \cdot \eps^{-1 / (\frac{1}{2} + \frac{1}{d})} \cdot \big( 1 + \ln(1/\eps) \big)
    .
  \]
  Combining this with \Cref{eq:PackingCoveringEntropyRelation}, we finally see
  \[
    M_{\GeneralBarron_d (C), L^\infty} (\eps)
    \leq M_{\GeneralBarron_d (C), L^\infty}^{\mathrm{ext},L^\infty} (\eps/2)
    \leq 2 C_2 \cdot \eps^{-1 / (\frac{1}{2} + \frac{1}{d})} \cdot \big( 1 + \ln(1/\eps) \big)
    ,
  \]
  as claimed in \Cref{prop:BarronEntropyUpperBound}.
\end{proof}

\subsubsection{Lower bounds for the entropy numbers}%
\label{sub:BarronEntropyLowerBounds}
In this subsection, we complement the upper bounds on the entropy numbers of the previous subsection by lower bounds. 
Since by Lemma \ref{lem:generalBarronClassIsGeneral}, we know that the general Barron class $\GeneralBarron_d (C)$ is larger than the Barron class $B_{d}(C)$ of Definition \ref{def:IntroBarronClass}, lower bounds on the entropy of $B_{d}(C)$ imply a lower bound on the entropy of $\GeneralBarron_d (C)$. We will therefore mainly study the special class $B_{d}(C)$ below. We observe that this lower bound coincides with the upper bound on the entropy of $\GeneralBarron_d (C)$ established in Proposition \ref{prop:BarronEntropyUpperBound} up to a logarithmic factor.

\begin{proposition}\label{prop:BarronEntropyLowerBound}
  For $d \in \N$, and $C > 0$, set
  \begin{align*}
    \GeneralBarron_d^\ast (C)
    := \bigl\{ f \in \GeneralBarron_d (C) \,\,\colon\,\, 0 \leq f \leq 1 \bigr\}.
  \end{align*}
  Then there exist $\eps^{(0)} = \eps^{(0)}(d,C) > 0$ and $C_0 = C_0(d,C) > 0$
  such that for every $\eps \in (0,\eps^{(0)}]$, we have
  \[
    M_{\GeneralBarron_d (C), L^1} (\eps)
  \geq M_{\GeneralBarron_d^\ast(C), L^1} (\eps)
  \geq M_{B_d (C), L^1} (\eps)
    \geq C_0 \cdot \eps^{-\frac{2d}{2+d}}
    =    C_0 \cdot \eps^{-1/(\frac{1}{2} + \frac{1}{d})} .
  \]
\end{proposition}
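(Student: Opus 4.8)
The plan is to prove the last inequality, $M_{B_d (C), L^1}(\eps) \gtrsim \eps^{-2d/(2+d)}$ for all sufficiently small $\eps$ (note $\tfrac{2d}{2+d} = \tfrac{1}{1/2 + 1/d}$). The first two inequalities are then immediate: $B_d(C) \subseteq \GeneralBarron_d^\ast(C) \subseteq \GeneralBarron_d(C)$ by \Cref{lem:generalBarronClassIsGeneral} together with the fact that elements of $B_d(C)$ take values in $[0,1]$, and the packing entropy is monotone under set inclusion.

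For the lower bound I would combine the classical bump construction with a random-signs argument. Fix a nonzero $g \in C_c^\infty(\R^d)$ with $\supp g \subset (0,1)^d$. For $k \in \N$ and $j \in \{0,\dots,k-1\}^d$ put $g_j := g\bigl(k(\,\cdot\, - j/k)\bigr)$; these $k^d$ functions have pairwise disjoint supports contained in $(0,1)^d$, and $g_j(0) = 0$ for every $j$. Given $\sigma = (\sigma_j) \in \{-1,1\}^{k^d}$ and an amplitude $a_k > 0$ to be fixed below, set
\[
  f_\sigma := \gamma_k + a_k \sum_{j} \sigma_j \, g_j ,
  \qquad \gamma_k := a_k \, \|g\|_{L^\infty} ,
\]
so that (at most one summand being nonzero at any point) $f_\sigma$ takes values in $[0, 2 a_k \|g\|_{L^\infty}] \subseteq [0,1]$ once $a_k$ is small. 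On $[0,1]^d$, $f_\sigma$ agrees with $\widetilde f_\sigma := \gamma_k \psi + a_k \sum_j \sigma_j g_j \in C_c^\infty(\R^d)$, where $\psi \in C_c^\infty(\R^d)$ is a fixed cutoff with $\psi \equiv 1$ on $[0,1]^d$ (the same device as in the proof of \Cref{prop:MeasureFreeBarronCharacterization}). Hence $F_\sigma := \widehat{\widetilde f_\sigma} \in L^1(\R^d)$ and, using the Fourier inversion theorem and $g_j(0) = 0 = \widetilde f_\sigma(0) - \gamma_k$, one gets $\int_{\R^d} F_\sigma = \widetilde f_\sigma(0) = \gamma_k$, so that $f_\sigma(x) = \gamma_k + \int_{\R^d}(e^{i\langle x,\xi\rangle}-1) F_\sigma(\xi)\,d\xi$ for $x \in [0,1]^d$, with admissible constant $c = \gamma_k \in [-C,C]$ provided $a_k \|g\|_{L^\infty} \le C$ — which holds for $k$ large, for every fixed $C > 0$. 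Thus $f_\sigma \in B_d(C)$ as soon as $\int_{\R^d}|\xi|\,|F_\sigma(\xi)|\,d\xi \le C$.

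The crux is to show this holds for most $\sigma$. With the Fourier normalization of \Cref{def:IntroBarronClass} one has $F_\sigma(\xi) = \gamma_k \widehat\psi(\xi) + a_k k^{-d}\,\widehat g(\xi/k)\sum_j \sigma_j e^{-i\langle j/k,\xi\rangle}$, and the first summand contributes only $\gamma_k \int_{\R^d}|\xi|\,|\widehat\psi(\xi)|\,d\xi \lesssim_d a_k$. For the second, draw $\sigma$ uniformly from $\{-1,1\}^{k^d}$; the $\sigma_j$ are then Rademacher, so for \emph{every} $\xi$
\[
  \EE_\sigma \Bigl| \sum_j \sigma_j \, e^{-i\langle j/k,\xi\rangle} \Bigr|
  \le \Bigl( \EE_\sigma \Bigl| \sum_j \sigma_j \, e^{-i\langle j/k,\xi\rangle} \Bigr|^2 \Bigr)^{1/2}
  = k^{d/2},
\]
whence, by Tonelli, $\EE_\sigma \int_{\R^d}|\xi|\,|F_\sigma(\xi)|\,d\xi \lesssim_d a_k + a_k k^{-d} \cdot k^{d/2}\int_{\R^d}|\xi|\,|\widehat g(\xi/k)|\,d\xi = a_k + C_g\, a_k\, k^{\frac{d}{2}+1}$, where $C_g = C_g(d) := \int_{\R^d}|\eta|\,|\widehat g(\eta)|\,d\eta < \infty$ and the substitution $\eta = \xi/k$ supplies the factor $k^{d+1}$. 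It is exactly this Rademacher cancellation — the gain of $k^{d/2}$ over the trivial bound $k^d$ — that yields the exponent $2d/(2+d)$; the trivial bound gives only the weaker $d/(d+1)$. Choosing $a_k := c_0(d)\,C\,k^{-(\frac{d}{2}+1)}$ small enough that the expectation above is $\le C/2$, Markov's inequality shows $\mathcal A_k := \{\sigma \in \{-1,1\}^{k^d} : f_\sigma \in B_d(C)\}$ has $|\mathcal A_k| \ge 2^{k^d-1}$; all the ``$k$ large'' conditions depend only on $(d,C)$ since $a_k \to 0$.

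Finally one extracts a large, $L^1$-separated subfamily. Disjointness of supports gives $\|f_\sigma - f_{\sigma'}\|_{L^1([0,1]^d)} = 2 a_k \|g\|_{L^1} k^{-d}\, \delta_H(\sigma,\sigma')$ with $\delta_H$ the Hamming distance. A greedy/volume (Varshamov--Gilbert-type) estimate — a Hamming ball of radius $k^d/8$ has at most $2^{H(1/8) k^d}$ elements, with $H(1/8)<1$ the binary entropy — produces $\mathcal P_k \subseteq \mathcal A_k$ with pairwise Hamming distance $> k^d/8$ and $|\mathcal P_k| \ge |\mathcal A_k| \cdot 2^{-H(1/8)k^d} \ge e^{c_1 k^d}$ for an absolute $c_1 > 0$. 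Hence $\{f_\sigma : \sigma \in \mathcal P_k\}$ is an $\eps_k$-packing of $B_d(C)$ in $L^1$ of cardinality $\ge e^{c_1 k^d}$, with $\eps_k := \tfrac14 a_k \|g\|_{L^1} \asymp k^{-(\frac{d}{2}+1)}$, so $M_{B_d(C),L^1}(\eps_k) \ge c_1 k^d$. For general small $\eps$, picking $k$ with $\eps_{k+1} < \eps \le \eps_k$ and using $\eps_k \asymp \eps_{k+1}$ together with monotonicity of $M_{B_d(C),L^1}$ gives $M_{B_d(C),L^1}(\eps) \ge c_1 k^d \gtrsim \eps_k^{-2d/(d+2)} \gtrsim \eps^{-2d/(d+2)}$ for all $\eps \le \eps^{(0)}(d,C)$, which is the assertion. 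I expect the main obstacle to be the expectation estimate in the third step — both justifying the interchange of $\EE_\sigma$ and the integral, and, above all, recognizing that the uniform-in-$\xi$ second-moment identity $\EE_\sigma|\sum_j \sigma_j e^{-i\langle j/k,\xi\rangle}|^2 = k^d$ is what pins down the sharp exponent; by comparison, the cutoff bookkeeping (ensuring $f_\sigma \in B_d(C)$ for all $C>0$) and the combinatorial extraction are routine.
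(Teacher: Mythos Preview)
Your proposal is correct and follows essentially the same approach as the paper: a bump construction on $k^d$ disjoint cells, a random-signs/Rademacher argument (via Cauchy--Schwarz, $\EE_\sigma|\sum_j \sigma_j e^{-i\langle j/k,\xi\rangle}|^2 = k^d$) to gain the crucial factor $k^{d/2}$ in the first Fourier moment, and a Varshamov--Gilbert-type extraction followed by interpolation in $\eps$. The only cosmetic difference is that the paper parametrizes the packing by \emph{subsets} $\Omega \subset \{0,\dots,N-1\}^d$ and uses the expectation bound to pick one good sign pattern $\theta_\Omega$ per subset, whereas you fix all bumps, vary the full sign vector $\sigma$, and use Markov's inequality to retain at least half of the $\sigma$'s---both routes lead to the same $\eps_k \asymp k^{-(1+d/2)}$ and packing size $e^{c k^d}$.
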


\begin{proof}

  \textbf{Step~1:}
  Fix $\varphi \in C_c^\infty \bigl( (0,1)^d\bigr)$ with $\varphi \geq 0$
  and $\int_{\R^d} \varphi \, d x = 1$; note that $\varphi$ only depends on $d$.
  Furthermore, fix $\psi \in C_c^\infty (\R^d)$ with $\psi \geq \| \varphi \|_{L^\infty}$
  on $[0,1]^d$.
  Again, $\psi$ only depends on $d$.

  Fix $N \in \N$ for the moment and define $\Omega_N := \{ 0,\dots,N-1 \}^d$.
  Furthermore, for $\Omega \subset \Omega_N$ and $\theta \in \{ \pm 1 \}^{\Omega}$, define
  \[
    f_{N,\Omega,\theta}
    := \sum_{\omega \in \Omega}
       \big[
         \theta_\omega
         \cdot \varphi \bigl(N \cdot (\bullet - \tfrac{\omega}{N})\bigr)
       \big]
    \qquad \text{and} \qquad
    g_{N,\Omega,\theta} := \psi + f_{N,\Omega,\theta}
    .
  \]
  Note that $\| f_{N,\Omega,\theta} \|_{L^\infty} \leq \| \varphi \|_{L^\infty}$,
  since the supports of the functions $\varphi (N \cdot (\bullet - \frac{\omega}{N}))$,
  $\omega \in \Omega_N$ are pairwise disjoint.
  Since the supports are also all contained in $[0,1]^d$, this implies
  \begin{equation}
    0
    \leq g_{N,\Omega,\theta}
    \leq \| \varphi \|_{L^\infty} + \| \psi \|_{L^\infty}
    =: C_1
    = C_1(d)
    .
    \label{eq:GFunctionsUniformBound}
  \end{equation}
  In this step, we show that there exists a constant $C_4 = C_4(d, C) \geq C_1 C > 0$ such that
  \begin{equation}
    \forall \, \Omega \subset \Omega_N \quad
      \exists \, \theta_\Omega \in \{ \pm 1 \}^{\Omega}: \qquad
        \tfrac{C}{C_4} \cdot N^{-(1 + \frac{d}{2})} \cdot g_{N,\Omega,\theta_\Omega}
    \in B_d (C) .
    \label{eq:BarronPackingSetConstructionStep1}
  \end{equation}

  To prove \Cref{eq:BarronPackingSetConstructionStep1},
  first note that if we define the modulation $M_\eta g$ of $g : \R^d \to \CC$
  by $\eta \in \R^d$ as $M_\eta g (x) := e^{2 \pi i \langle \eta, x \rangle} g(x)$, then
  \(
    \widehat{f_{N,\Omega,\theta}}
    = N^{-d}
      \sum_{\omega \in \Omega}
      \big(
        \theta_\omega \cdot M_{-\omega/N}\bigl[\widehat{\varphi}(N^{-1} \bullet)\bigr]
      \big)
    .
  \)
  Now, consider $\theta$ as a random variable that is uniformly distributed in
  $\{ \pm 1 \}^{\Omega}$.
  Then, the Cauchy-Schwarz inequality shows that
  \(
    \EE_\theta \bigl|\sum_{\omega \in \Omega} \theta_\omega a_\omega\bigr|
    \leq \big( \EE_\theta \bigl|\sum_{\omega \in \Omega} \theta_\omega a_\omega\bigr|^2 \big)^{1/2}
    =    (\sum_{\omega \in \Omega} |a_\omega|^2)^{1/2}
  \)
  for all $(a_\omega)_{\omega \in \Omega} \in \CC^\Omega$.
  Combining these observations and using the change of variables $\eta = N^{-1} \xi$
  and the estimate $|\Omega| \leq |\Omega_N| \leq N^d$, we see
  \begin{equation}
    \begin{split}
      \EE_\theta
      \bigg[
        \int_{\R^d}
          (1 + |\xi|) \cdot \bigl|\widehat{f}_{N,\Omega,\theta} (\xi)\bigr|
        \, d \xi
      \bigg]
      & = N^{-d}
          \int_{\R^d}
            (1 + |\xi|)
            \cdot |\widehat{\varphi}(N^{-1} \xi)|
            \cdot \EE_\theta
                  \bigg|
                    \sum_{\omega \in \Omega}
                      \theta_\omega
                      \cdot e^{-\frac{2 \pi i}{N} \langle \omega, \xi \rangle}
                  \bigg|
          \, d \xi \\
      & \leq |\Omega|^{1/2} \cdot N^{-d}
             \int_{\R^d}
               (1 + |\xi|)
               \cdot |\widehat{\varphi}(N^{-1} \xi)|
             \, d \xi \\
      & \leq N^{d/2}
             \int_{\R^d}
               (1 + |N \eta|)
               \cdot |\widehat{\varphi}(\eta)|
             \, d \eta \\
      & \leq C_2 \cdot N^{1 + \frac{d}{2}}
    \end{split}
    \label{eq:BarronPackingSetConstructionStep2}
  \end{equation}
  for a constant $C_2 = C_2(d) > 0$.
  In particular, this implies the existence of $\theta_\Omega \in \{ \pm 1 \}^{\Omega}$ satisfying
  \(
    \int_{\R^d} (1 + |\xi|) \cdot |\widehat{f}_{N,\Omega,\theta_\Omega}(\xi)| \, d \xi
    \leq C_2 \cdot N^{1 + \frac{d}{2}}
  \)
  and hence
  \[
    \int_{\R^d}
      (1 + |\xi|)
      \cdot |\widehat{g}_{N,\Omega,\theta_\Omega} (\xi)|
    \, d \xi
    \leq C_2 \, N^{1 + \frac{d}{2}} + \int_{\R^d} (1 + |\xi|) \, |\widehat{\psi}(\xi)| \, d \xi
    \leq C_2 \, N^{1 + \frac{d}{2}} + C_3
    \leq \frac{C_4}{2\pi} \, N^{1 + \frac{d}{2}}
  \]
  for suitable $C_3, C_4 > 0$ only depending on $d$ and $C$, where we can without loss of generality
  assume that $C_4 \geq C_1 C$.
  By Fourier inversion, this implies for $c := \int_{\R^d}  \widehat{g}_{N,\Omega,\theta_\Omega}(\xi) d\xi $
  \begin{align*}
    g_{N,\Omega,\theta_\Omega}(x)
    &= c + \int_{\R^d} (e^{2 \pi i \langle x,\xi \rangle} - 1)  \widehat{g}_{N,\Omega,\theta_\Omega}(\xi) d\xi \\
    &= c + \int_{\R^d} (e^{i \langle x,\xi \rangle} - 1) \frac{1}{(2\pi)^d} \widehat{g}_{N,\Omega,\theta_\Omega}(\xi/(2\pi)) d\xi,
  \end{align*}
where we applied a change of variables from $\xi$ to $\xi/2\pi$. 
We have by change of variables from $\xi$ to $2\pi \xi$ that 
\[
\int_{\R^d} |\xi| \frac{1}{(2\pi)^d} |\widehat{g}_{N,\Omega,\theta_\Omega}(\xi/(2\pi))| d\xi = 2\pi \int_{\R^d} |\xi| |\widehat{g}_{N,\Omega,\theta_\Omega}(\xi)| d\xi \leq C_4 \, N^{1 + \frac{d}{2}}.
\]  

By definition of $B_d(C)$, this easily implies that
  $\frac{C}{C_4} N^{-(1 + \frac{d}{2})} g_{N,\Omega,\theta_\Omega} \in B_d(C)$,
  as claimed in Equation~\eqref{eq:BarronPackingSetConstructionStep1}.

  \medskip{}

  \textbf{Step~2:}
  Fix $N \in \N$ for this step.
  Define
  \[
    \varphi_{N,\omega}
    := \varphi \bigl(N \cdot (\bullet - \tfrac{\omega}{N})\bigr)
    \qquad \text{for} \qquad
    \omega \in \Omega_N = \{ 0,\dots,N-1 \}^d .
  \]
  Since $\supp \varphi \subset (0,1)^d$, it is easy to see that
  the family $(\varphi_\omega)_{\omega \in \Omega_N}$ has disjoint supports,
  all contained in $[0,1]^d$ and $\| \varphi_\omega \|_{L^1([0,1]^d)} = N^{-d} \kappa$
  for $\kappa := \| \varphi \|_{L^1}$.
  Now, for $\Omega \subset \Omega_N$ and $\theta \in \{ \pm 1 \}^{\Omega}$,
  define $h_{\Omega, \theta} := \frac{C}{C_4} \cdot N^{-(1 + \frac{d}{2})} \cdot g_{N,\Omega,\theta}$.
  Then we see for $\Omega, \widetilde{\Omega} \subset \Omega_N$, and $\theta \in \{ \pm 1 \}^\Omega$
  as well as $\widetilde{\theta} \in \{ \pm 1 \}^{\widetilde{\Omega}}$ that
  \begin{equation}
    \begin{split}
      \big\| h_{\Omega,\theta} - h_{\widetilde{\Omega}, \widetilde{\theta}} \big\|_{L^1}
      & = \tfrac{C}{C_4} \cdot N^{-(1 + \frac{d}{2})}
          \cdot \bigg\|
                  \sum_{\omega \in \Omega}
                    \theta_\omega \, \varphi_\omega
                  - \sum_{\omega \in \widetilde{\Omega}}
                      \widetilde{\theta}_\omega \, \varphi_\omega
                \bigg\|_{L^1} \\
      & = \tfrac{C}{C_4} \cdot N^{-(1 + \frac{d}{2})}
          \cdot \bigg\|
                  \sum_{\omega \in \Omega \cap \widetilde{\Omega}}
                    (\theta_\omega - \widetilde{\theta}_\omega) \, \varphi_\omega
                  + \sum_{\omega \in \Omega \setminus \widetilde{\Omega}}
                     \theta_\omega \, \varphi_\omega
                  - \sum_{\omega \in \widetilde{\Omega} \setminus \Omega}
                      \widetilde{\theta}_\omega \, \varphi_\omega
                \bigg\|_{L^1} \\
      & = \tfrac{\kappa C}{C_4} \cdot N^{-(1 + \frac{3}{2} d)}
          \cdot \bigg(
                  \sum_{\omega \in \Omega \cap \widetilde{\Omega}}
                    |\theta_\omega - \widetilde{\theta}_\omega|
                  + \sum_{\omega \in \Omega \setminus \widetilde{\Omega}}
                      |\theta_\omega|
                  + \sum_{\omega \in \widetilde{\Omega} \setminus \Omega}
                      |\widetilde{\theta}_\omega|
                \bigg) \\
      & \geq \tfrac{\kappa C}{C_4}
             \cdot N^{-(1 + \frac{3}{2} d)}
             \cdot |\Omega \Delta \widetilde{\Omega}| ,
             \quad \text{where} \quad
             \Omega \Delta \widetilde{\Omega}
             = (\Omega \setminus \widetilde{\Omega}) \cup (\widetilde{\Omega} \setminus \Omega)
             .
    \end{split}
    \label{eq:SymmetricDifferenceSignificance}
  \end{equation}

  \smallskip{}

  \textbf{Step~3:} Now, choose $N \in \N$ with $16 \mid N$
  and define $n := N^d$ and $r := \frac{n}{16}$, as well as $k := \frac{n}{4}$.
  Note that $\ln (\frac{e n}{2 r}) = \ln(\frac{e n}{n/8}) = \ln(8 e) \leq 4$ and hence
  $k + 2 r \ln (\frac{e n}{2 r}) \leq \frac{n}{4} + \frac{n}{8} \cdot 4 \leq n$.
  Therefore, \cite[Theorem~4.3.5]{VershyninHighDimensionalProbability}
  shows that there exists a set $J \subset 2^{\{ 1,\dots,n \}}$ satisfying $|J| = 2^k$
  and such that $|I \Delta I'| > r$ for all $I,I' \in J$ with $I \neq I'$,
  where $I \Delta I' = (I \setminus I') \cup (I' \setminus I)$ denotes the symmetric difference
  of $I$ and $I'$.

  Let us now identify $\{ 1,\dots,n \}$ with $\Omega_N = \{ 0,\dots,N-1 \}^d$;
  this is possible since $n = N^d$.
  Thus, we see that there exists a set $\mathcal{I} \subset 2^{\Omega_N}$
  satisfying $|\mathcal{I}| = 2^k = 2^{N^d/4}$ and
  $|\Omega \Delta \widetilde{\Omega}| > r = \frac{N^d}{16}$
  for all $\Omega, \widetilde{\Omega} \in \mathcal{I}$ with $\Omega \neq \widetilde{\Omega}$.
  In combination with Equations~\eqref{eq:SymmetricDifferenceSignificance}
  and \eqref{eq:BarronPackingSetConstructionStep1}, this shows that
  \(
    M_N
    := \big\{
         h_{\Omega, \theta_\Omega}
         \colon
         \Omega \in \mathcal{I}
       \big\}
    \subset B_d(C)
  \)
  is a $\delta$-packing set for $B_d(C)$ (with respect to the metric induced
  by $\| \cdot \|_{L^1}$) for
  \[
    \delta
    := \delta_N
    := \frac{\kappa C}{C_4} \cdot N^{-(1 + \frac{3}{2}d)} \cdot \frac{N^d}{16}
    =  \frac{\kappa C}{16 \, C_4} \cdot N^{-(1 + \frac{d}{2})}
    =  C_5 \cdot N^{-(1 +\frac{d}{2})},
  \]
  where $C_5 = C_5(d,C)$.
  Note furthermore that \Cref{eq:GFunctionsUniformBound} shows
  $0 \leq h_{\Omega,\theta_\Omega} \leq \frac{C C_1}{C_4} \leq 1$
  and hence $M_N \subset B_d (C)$.
  Hence, by definition of the packing entropy numbers (\Cref{def:PackingEntropy}), we see
  \[
    M_{B_d (C), L^1} (\eps)
    \geq \ln |M_N|
    =    \ln |\mathcal{I}|
    =    {N^d/4}
    .
  \]

  \medskip{}

  \textbf{Step~4:} In this step, we complete the proof.
  For brevity, define $M(\eps) := M_{B_d (C), L^1}(\eps)$ for $\eps \in (0,\infty)$.
  Step~3 shows for $N \in \N$ with $16 \mid N$ and $\eps_N := C_5 \cdot N^{-(1 + \frac{d}{2})}$
  that $M(\eps_N) \geq \frac{N^d}{4}$.

  Now, set $\eps^{(0)} := \eps_{16} = C_5 \cdot {16}^{-(1 + \frac{d}{2})}$.
  For $\eps \in (0,\eps^{(0)}]$ there then exists $m \in \N$ such that
  $\eps_{16 (m+1)} < \eps \leq \eps_{16 m}$.
  On the one hand, this implies for $N := 16 m$ because of $m + 1 \leq 2m$ that
  \[
    \eps
    > \eps_{16(m+1)}
    = C_5 \cdot \big( 16 (m+1) \big)^{-(1 + \frac{d}{2})}
    \geq C_5 \cdot 2^{-(1 + \frac{d}{2})} \cdot N^{-(1 + \frac{d}{2})}
  \]
  and hence $N^{1 + \frac{d}{2}} \geq \frac{C_5}{2^{1 + \frac{d}{2}}} \cdot \eps^{-1}$.
  Because of $\frac{d}{1 + \frac{d}{2}} = \frac{2 d}{2 + d}$, this implies
  \[
    N^d
    \geq \bigl(C_5 / 2^{1 + \frac{d}{2}}\bigr)^{\frac{2d}{2 + d}}
         \cdot \eps^{-\frac{2d}{2 + d}}
    =: C_6 \cdot \eps^{-\frac{2d}{2 + d}}.
  \]
  On the other hand, since $M$ is non-increasing, we see
  \[
    M_{B_d (C), L^1} (\eps)
    =    M(\eps)
    \geq M(\eps_{16 m})
    =    M(\eps_N)
    \geq \frac{N^d}{4}
    \geq \frac{C_6}{4} \cdot \eps^{-\frac{2d}{2 + d}}
    =:   C_0 \cdot \eps^{-\frac{2d}{2 + d}} .
    \qedhere
  \]
Finally,  $M_{\GeneralBarron_d(C), L^1} (\eps)
  \geq M_{\GeneralBarron_d^\ast(C), L^1} (\eps)
  \geq M_{B_d (C), L^1} (\eps)$ follows from Lemma \ref{lem:generalBarronClassIsGeneral}.
\end{proof}

\subsection{Learning bounds for sets with Barron regular boundaries}

Having established upper and lower bounds on the entropy of Barron regular functions
in \Cref{prop:BarronEntropyUpperBound,prop:BarronEntropyLowerBound},
we can now identify a fundamental bound on the problem
of estimating functions with Barron regular decision boundary
from noiseless point samples.

Indeed, \Cref{prop:BarronEntropyUpperBound,prop:BarronEntropyLowerBound}
show that the assumptions of \Cref{cor:LowerLearningProblem} are satisfied
for $\CalC = B_{d-1}(C)$, with 
\begin{align}\label{eq:ValuesOfAlphaBetaAandB}
	\alpha
  = \beta
  = \frac{1}{2} + \frac{1}{d-1}
  = \frac{d + 1}{2(d-1)}
  \qquad \text{and} \qquad
  b = 1, \quad a = 0.
\end{align}
This yields the following result.

\begin{corollary}\label{cor:LowerBdBarron}
  Let $d\in \N_{\geq 2}$ and $C > 0$.
  Let $\CalC = B_{d-1}(C)$ the associated Barron class,
  as introduced in \Cref{def:IntroBarronClass}.
  Then, 
	\begin{align*}
		m^{-\frac{d+1}{3 d - 1}} \cdot [\ln(2m)]^{- \frac{(d+1)(5 d - 3)}{(3d-1)(2d-2)}}
		& \lesssim \inf_{A \in \mathcal{A}_m(\Lambda, L^2)}
                     \sup_{h \in H_{\CalC}}
                       \mathbb{E}_{(X_i)_{i=1}^m \IIDsim \lebesgue}
                         \big\| A \big( (X_i, h(X_i) )_{i=1}^m \big) - h \big\|_{L^2}^2 \\
		& \lesssim 
		            m^{-\frac{d+1}{3 d - 1}}
		            \cdot [\ln(2m)]^{\frac{5 d - 3}{3 d - 1}}
	\end{align*}
	for all $m \in \N$.
\end{corollary}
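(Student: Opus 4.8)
The plan is to obtain this corollary by specializing \Cref{cor:LowerLearningProblem} to the boundary class $\CalC = B_{d-1}(C)$, so that the statement reduces to two tasks: (i) verifying that $\CalC = B_{d-1}(C)$ satisfies the two-sided $L^1$ packing-entropy hypothesis of \Cref{cor:LowerLearningProblem} with the parameters recorded in \eqref{eq:ValuesOfAlphaBetaAandB}, and (ii) substituting those parameters into the formulas for $\kappa_1,\kappa_2$ and simplifying the exponents. A preliminary remark is that $B_{d-1}(C) \subseteq C([0,1]^{d-1}; [0,1])$: by \Cref{def:IntroBarronClass} its elements are $[0,1]$-valued, and they are (Lipschitz) continuous because $|e^{i\langle x,\xi \rangle} - e^{i\langle x',\xi \rangle}| \le |x - x'|\,|\xi|$ together with $\int_{\R^d}|\xi|\,|F(\xi)|\,d\xi < \infty$; hence the hypotheses of \Cref{cor:LowerLearningProblem} are meaningful for this $\CalC$.

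For the lower bound on the $L^1$ packing entropy I would invoke \Cref{prop:BarronEntropyLowerBound} in dimension $d-1$, which yields $M_{B_{d-1}(C), L^1}(\eps) \ge C_0 \cdot \eps^{-1/(\frac12 + \frac1{d-1})}$ for $\eps \in (0, \eps^{(0)}]$; since both sides are non-increasing in $\eps$, this propagates (after shrinking $C_0$ if necessary) to all $\eps \in (0,1)$. Thus the lower hypothesis holds with $\alpha = \frac12 + \frac1{d-1} = \frac{d+1}{2(d-1)}$ and $a = 0$. For the upper bound I would combine the covering/packing comparisons \eqref{eq:PackingCoveringEntropyRelation}, the trivial inequality $\|\cdot\|_{L^1([0,1]^{d-1})} \le \|\cdot\|_{L^\infty([0,1]^{d-1})}$ (so that any external $L^\infty$-$\eps$-net is an external $L^1$-$\eps$-net), and the inclusion $B_{d-1}(C) \subseteq \GeneralBarron_{d-1}(C)$ from \Cref{lem:generalBarronClassIsGeneral}, to get
\[
  M_{B_{d-1}(C), L^1}(\eps)
  \;\le\; M^{\mathrm{ext},L^1}_{B_{d-1}(C), L^1}(\eps/2)
  \;\le\; M^{\mathrm{ext},L^\infty}_{B_{d-1}(C), L^\infty}(\eps/2)
  \;\le\; M_{B_{d-1}(C), L^\infty}(\eps/2)
  \;\le\; M_{\GeneralBarron_{d-1}(C), L^\infty}(\eps/2).
\]
Then \Cref{prop:BarronEntropyUpperBound}, applied in dimension $d-1$, bounds the right-hand side by $\lesssim \eps^{-1/(\frac12 + \frac1{d-1})}(1 + \ln(1/\eps)) \lesssim \eps^{-1/\beta}\ln^b(2 + \eps^{-1})$ for $\eps \in (0,1)$, with $\beta = \frac{d+1}{2(d-1)} = \alpha$ and $b = 1$. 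This establishes the situation of \eqref{eq:ValuesOfAlphaBetaAandB}.

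It then remains to substitute $\alpha = \beta = \frac{d+1}{2(d-1)}$, $a = 0$, $b = 1$ into the expressions for $\kappa_1, \kappa_2$ from \Cref{cor:LowerLearningProblem}. One computes $\beta + 1 = \frac{3d-1}{2(d-1)}$, hence $\frac{\beta}{\beta+1} = \frac{d+1}{3d-1}$ and $2 + \beta b = \frac{5d-3}{2(d-1)}$, so that $\frac{2+\beta b}{\beta+1} = \frac{5d-3}{3d-1}$; this gives exactly the claimed upper rate $\kappa_2(m) \asymp m^{-\frac{d+1}{3d-1}}[\ln(2m)]^{\frac{5d-3}{3d-1}}$. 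For the lower rate, since $\alpha = \beta$ one has $\frac{\alpha}{\beta+1} = \frac{d+1}{3d-1}$, while $-\frac{\alpha(2+\beta b)}{\beta+1} - \alpha a = -\frac{d+1}{2(d-1)}\cdot\frac{5d-3}{3d-1} = -\frac{(d+1)(5d-3)}{(3d-1)(2d-2)}$, which is exactly the claimed exponent of $\ln(2m)$ in $\kappa_1$. Combining with \Cref{cor:LowerLearningProblem} then finishes the proof.

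I do not anticipate a genuine obstacle: the corollary is essentially bookkeeping on top of \Cref{cor:LowerLearningProblem,prop:BarronEntropyUpperBound,prop:BarronEntropyLowerBound}. The only points needing a little care are passing from the $L^\infty$ covering-entropy upper bound of \Cref{prop:BarronEntropyUpperBound} to an $L^1$ packing-entropy upper bound (which costs only a factor $2$ in the radius and an absolute constant, via \eqref{eq:PackingCoveringEntropyRelation} and the norm comparison) and upgrading both entropy estimates from ``small $\eps$'' to ``all $\eps \in (0,1)$'', which is immediate from monotonicity of the entropy function and absorption of constants.
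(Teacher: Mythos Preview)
Your proposal is correct and follows exactly the same route as the paper: invoke \Cref{prop:BarronEntropyLowerBound} and \Cref{prop:BarronEntropyUpperBound} (in dimension $d-1$) to verify the two-sided $L^1$ packing-entropy hypothesis of \Cref{cor:LowerLearningProblem} with the parameters in \eqref{eq:ValuesOfAlphaBetaAandB}, and then compute the exponents. You are in fact more explicit than the paper about the passage from the $L^\infty$ covering bound to the $L^1$ packing bound and about extending the entropy estimates to all $\eps\in(0,1)$; the paper simply asserts these before stating the corollary.
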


\begin{proof}
  The result follows directly from \Cref{cor:LowerLearningProblem},
  by noting that \eqref{eq:ValuesOfAlphaBetaAandB} implies that:
	\begin{align*}
		\frac{\alpha}{\beta+1}
		  = \frac{\beta}{\beta+1}
		& = \frac{d+1}{3d - 1}, \\
		\frac{\alpha (2 + \beta b)}{\beta + 1} + \alpha a
		& = \frac{(d+1)(5d - 3)}{(3d - 1)(2d - 2)} , \\
		\frac{2 + \beta b}{1 + \beta}
		& = \frac{5 d - 3}{3 d - 1} .
		\qedhere
	\end{align*}	
\end{proof}

 \section{Upper bounds}
\label{sec:UpperBounds}

In this section, we will derive upper bounds on the problem of learning functions
with Barron regular decision boundary via empirical risk minimization
over suitable sets of neural networks.
We will consider only (fully-connected, feed-forward) neural networks
with the so-called \emph{ReLU activation function}
$\varrho : \R \to \R, x \mapsto \max \{ 0, x \}$.
For the precise mathematical definition of such neural networks
and regarding the definitions of concepts like \emph{number of weights}
or \emph{number of (hidden) layers}, we refer e.g.\ to \cite[Section~1.5]{OurBarronPaper}
or \cite[Section~2.1]{GrohsVoigtlaenderTheoryToPracticeGap}.

The following definition introduces a short-hand notation for certain sets of neural networks.

\begin{definition}\label{def:NeuralNetworkSets}
	Let $d \in \N_{\geq 2}$, $N, W \in \N$, and $B > 0$.
	We denote by $\NN(d, N, W, B)$ the set of (functions implemented by) ReLU neural networks
	with three hidden layers, $d$-dimensional input and one-dimensional output,
  with at most $N$ neurons per layer and with at most $W$ non-zero weights. 
 We also assume that the weights of the neural networks are bounded in absolute value by $B$.
  Moreover, we set
  \[
    \NN_\ast (d, N, W, B)
    := \big\{
         f \in \NN(d, N, W, B)
         \quad\colon\quad
         0 \leq f(x) \leq 1 \quad \forall \, x \in [0,1]^d
       \big\}
    .
  \]
\end{definition}

\begin{remark}\label{rem:EntropyOfNNs}
  It follows from \cite[Lemma~6.1]{GrohsVoigtlaenderTheoryToPracticeGap}
  (with $\ell \equiv 3+1 = 4$ and $c \equiv \lceil B \rceil$ and $n = W$) by taking logarithms
  that the covering entropy $V_{[0,1]^d, \| \cdot \|_\infty}(\delta)$
  of $\NN(d, N, W, B)$, which we denote here as $V_{[0,1]^d, \| \cdot \|_\infty}(\delta; N, W, B)$,
  is bounded\footnote{It is \emph{not} an error that the right hand side of the following estimate does not depend on $N$.} by
	\begin{align*}
		V_{[0,1]^d, \| \cdot \|_\infty}(\delta; N, W, B)
    & \leq \ln
           \bigg(
             \Big[
               \frac{44}{\delta} \cdot 4^4 \cdot \big( \lceil B \rceil \cdot \max \{ d, W \} \big)^{5}
             \Big]^W
           \bigg) \\
    & \leq W
           \cdot \Big(
                   10
		+ \ln\bigl(1 / \delta\bigr)
                    + 5 \ln\bigl(\lceil B \rceil\bigr)
                   + 5 \ln\bigl(\max\{ d,W \}\bigr)
                 \Big)
	\end{align*}
	for $\delta \in (0,1]$, $d, N, W \in \N$ and $B > 0$.
  Since $\NN_\ast(d, N, W, B) \subset \NN(d, N, W, B)$, the same bound trivially applies for the
  entropy numbers of $\NN_\ast(d, N, W, B)$ with respect to the sup norm $\| \cdot \|_\infty$
  on $[0,1]^d$.

  We mention that similar bounds were obtained in \cite[Lemma~5]{schmidt2020nonparametric}
  and \cite[Proposition~2.8]{BernerBlackScholesGeneralizationError}.
\end{remark}

Next, we recall a result from \cite{OurBarronPaper} regarding the approximation
of classifiers with Barron-regular decision boundary.
We remind the reader that the notations $B(R)$ and $\RegClass_{\CalC}(d,M)$ were introduced
in \Cref{def:ClassifiersWithRegularDecisionBoundary,def:IntroBarronClass}.

\begin{theorem}[{\cite[Theorem 3.7]{OurBarronPaper}}]\label{thm:OurApproximationResult}
  There exists an \emph{absolute constant} $\tau \geq 1$ with the following property:
	Given arbitrary $d\in \N_{\geq 2}$, $M, N \in \N$, $R \geq 1$, and $h \in \RegClass_{B(R)}(d, M)$,
	then, with the Lebesgue measure $\lebesgue$, there exists a neural network $\Phi_{h, N}$ such that 
	\begin{align}
		\lebesgue
    \bigl(
      \big\{
        x \in [0,1]^d
        \,\,\colon\,\,
        h(x) \neq \Phi_{h, N}(x)
      \big\}
    \bigr)
    \leq 6 \tau d^2 \cdot M \cdot R \cdot N^{-1/2}.
	\end{align} 
	Moreover, $\Phi_{h, N}$ is implemented by a ReLU neural network that has $3$ hidden layers,
  at most $M \cdot (N + 4d + 2)$ neurons per layer and at most $54d^2 M N$ non-zero weights,
  all of which are bounded in absolute value by
	\[
    5 d \cdot \bigl(1 + \tau \sqrt{d} R\bigr) + 2 \sqrt{N} ,
	\]
	and $0 \leq \Phi_{h, N}(x) \leq 1$ for all $x \in \R^d$. 
\end{theorem}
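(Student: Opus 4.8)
The plan is to reduce the claim to an approximation problem for a \emph{single} Barron horizon function and then to assemble $M$ such local pieces into one small ReLU network. First I would unfold \Cref{def:ClassifiersWithRegularDecisionBoundary}: given $h \in \RegClass_{B(R)}(d,M)$, write $h = \Indicator_\Omega$ with $\Omega \subset \bigcup_{i=1}^M Q_i$ for axis-aligned rectangles with disjoint interiors, such that on each $Q_i$ one has $\Indicator_\Omega = f_i \circ P_i$ or $\Indicator_\Omega = 1 - f_i \circ P_i$ almost everywhere, where $P_i$ is a permutation matrix and $f_i = h_{b_i}$ is the general horizon function of some $b_i \in B_{d-1}(R)$. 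Permuting the input coordinates of a ReLU network and post-composing with $t \mapsto 1-t$ cost no extra weights and do not change the depth, so it suffices to (i) construct for each $i$ a small ReLU network $\Phi_i$ with $0 \le \Phi_i \le 1$ and $\lebesgue\bigl(\{x \in Q_i : \Phi_i(x) \neq f_i(x)\}\bigr)$ of order $R N^{-1/2}$ (up to a $d$-dependent constant), and (ii) combine the $\Phi_i$ into a single three-hidden-layer network whose disagreement set with $h$ is contained in the union of the $M$ patch error sets and a thin strip around $\bigcup_i \partial Q_i$.

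For step (i) I would use two classical building blocks. The first is the Barron/Maurey approximation bound: any $b_i \in B_{d-1}(R)$ lies in the $L^2([0,1]^{d-1})$-closure of convex combinations of ridge functions, so a Maurey sampling argument produces a shallow ReLU network $\beta_i$ with $O(N)$ neurons and $\| b_i - \beta_i \|_{L^2}^2$ of order $R^2/N$, with all weights controlled by the (rescaled) Barron constant --- this is where the term $5d(1 + \tau \sqrt{d}\, R)$ and the $\sqrt{d}$-scaling flagged in \Cref{remark:NewBarronIsSpecialCaseOfOldBarron} enter. Cauchy--Schwarz then upgrades this to $\| b_i - \beta_i \|_{L^1}$ of order $R N^{-1/2}$. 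The second block is a one-dimensional clipped ramp $\varrho_\delta(t) := \min\{1, \max\{0, t/\delta\}\}$, itself a one-hidden-layer ReLU network; setting $\Phi_i(x) := \varrho_\delta\bigl(x_d - \beta_i(x_1,\dots,x_{d-1}) + \tfrac{\delta}{2}\bigr)$ gives a ReLU network of one more hidden layer, valued in $[0,1]$. A layer-cake estimate in the $x_d$-variable (Fubini over $x' = (x_1,\dots,x_{d-1})$) shows that for each $x'$ the slice $\{x_d : \Phi_i(x) \neq h_{b_i}(x)\}$ has length at most $\delta + 2\,|b_i(x') - \beta_i(x')|$; integrating in $x'$ bounds the patch error by a $d$-dependent multiple of $\delta + R N^{-1/2}$. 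Taking $\delta \asymp N^{-1/2}$ keeps the error of order $R N^{-1/2}$ while the factor $1/\delta \asymp \sqrt{N}$ appearing in the ramp is exactly what contributes the $2\sqrt{N}$ summand to the weight bound.

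For step (ii) I would merge the $M$ patch networks in parallel: each rectangle $Q_i$ is an intersection of $2d$ coordinate half-spaces, so a soft indicator of $Q_i$ (equal to $1$ well inside $Q_i$, to $0$ outside, transitioning over a narrow strip) is realizable by $O(d)$ ReLU neurons, and the selection ``output $\Phi_i$ on $Q_i$'' is implemented by combining these soft indicators with the $\Phi_i$ through a bounded-depth gadget, summing the $M$ contributions and clipping the result back into $[0,1]$. Careful bookkeeping then shows the whole construction fits into three hidden layers, at most $M(N + 4d + 2)$ neurons per layer and at most $54 d^2 M N$ non-zero weights, with all weights bounded by $5d(1 + \tau \sqrt{d}\, R) + 2\sqrt{N}$. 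Since the disagreement set of the assembled network with $h$ is contained in the union of $\bigl(\bigcup_i \{x \in Q_i : \Phi_i(x) \neq f_i(x)\}\bigr)$ and a thin neighbourhood of $\bigcup_i \partial Q_i$, and the latter sits inside a fixed Lebesgue-null set and so can be made arbitrarily small, the triangle inequality for $\lebesgue$ gives a bound of order $M R N^{-1/2}$; tracking the dimension dependence of the constants from step (i) then yields the stated $6 \tau d^2 M R N^{-1/2}$.

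The main obstacle is step (ii): packing the affine rescalings $Q_i \to [0,1]^d$, the permutations $P_i$, the shallow networks $\beta_i$, the soft box indicators and the selection mechanism into a \emph{single} ReLU network of \emph{exactly} three hidden layers, while respecting the polynomial-in-$d$, linear-in-$M$, linear-in-$N$ weight budget and simultaneously keeping the transition strips around the $\partial Q_i$ narrow enough not to spoil the $M R N^{-1/2}$ error bound yet wide enough that the weights stay $\lesssim \sqrt{N} + \sqrt{d}\, R$. A secondary but genuine technical point, needed to pin down the explicit constant $5d(1 + \tau\sqrt{d}\, R)$, is proving the Barron approximation bound with explicit control on the magnitude of \emph{all} the weights of $\beta_i$ (not merely the output layer), which uses the integral representation of functions in $B_{d-1}(R)$ together with a discretization of the frequency variable before the Maurey sampling step.
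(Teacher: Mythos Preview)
The paper's proof is entirely different and essentially one line: it observes via \Cref{remark:NewBarronIsSpecialCaseOfOldBarron} (whose technical verification is deferred to the appendix) that $h \in \RegClass_{B(R)}(d,M)$ forces the underlying set to lie in the class $\mathcal{BB}_{\tau\sqrt{d}R,M}(\R^d)$ from \cite{OurBarronPaper}, and then quotes \cite[Theorem~3.7]{OurBarronPaper} as a black box. In other words, in this paper the theorem is a corollary of an external result, not something proven from scratch.

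What you are sketching is a reconstruction of that external result itself. Your outline (Maurey sampling for each $b_i$, a clipped ramp $\varrho_\delta$ to turn the approximate boundary into an approximate horizon function, then parallel assembly with soft box indicators) is indeed the architecture used in \cite{OurBarronPaper}, so the plan is sound in spirit. The advantage of your route is that it is self-contained; the advantage of the paper's route is that it avoids re-doing several pages of delicate bookkeeping.

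As written, though, your sketch has a genuine gap precisely where you flag the ``main obstacle.'' The sentence ``the latter sits inside a fixed Lebesgue-null set and so can be made arbitrarily small'' is not correct as stated: the transition strips of the soft indicators have \emph{positive} measure, and shrinking them forces the corresponding weights to grow like the reciprocal of the strip width. To stay within the announced weight bound $5d(1+\tau\sqrt{d}R)+2\sqrt{N}$ you cannot take the strip width smaller than order $N^{-1/2}$, so the strip contribution to the error is genuinely of the same order $N^{-1/2}$ as the Maurey term, not ``arbitrarily small.'' Showing that this contribution nonetheless fits under $6\tau d^2 M R N^{-1/2}$, and that the whole selection mechanism collapses to exactly three hidden layers with at most $M(N+4d+2)$ neurons per layer, requires the explicit construction carried out in \cite{OurBarronPaper}; your proposal names the ingredients but does not supply this argument.
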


\begin{remark}
	Note that since $0 \leq \Phi_{h, N} \leq 1$ in \Cref{thm:OurApproximationResult},
	it holds for any $p > 0$ that
	\[
    \mathbb{E}_{X \sim U([0,1]^d)}
      \big[
        |h(X) - \Phi_{h, N}(X)|^p
      \big]
      \leq 6 \tau d^2 \cdot M \cdot R \cdot N^{-1/2}
    .
	\]
\end{remark}

\begin{proof}[Proof of \Cref{thm:OurApproximationResult}]
  Thanks to \Cref{remark:NewBarronIsSpecialCaseOfOldBarron},
  this follows immediately from \cite[Theorem~3.7]{OurBarronPaper}.
\end{proof}

Even though the learning problem that we consider is a binary classification problem,
we will not perform empirical risk minimization using the $0$--$1$-loss,
but instead use the so-called \emph{Hinge loss} for the empirical risk minimization problem.
For completeness, we define the \emph{(empirical) Hinge risk}
and the associated \emph{empirical risk minimization problem} in the following definition.
The definition might seem slightly unusual at first sight;
this is because we consider classifiers $h : [0,1]^d \to \{ 0,1 \}$ instead of
$h : [0,1]^d \to \{ -1,1 \}$ (which is more common for the Hinge loss).
We then use the map $t \mapsto 2 t - 1$ to map $[0,1]$ to $[-1,1]$.
Regarding the notation $\Lambda$ used in the definition below,
we recall from \Cref{def:Estimator} that $\Lambda = [0,1]^d \times \{ 0,1 \}$.

\begin{definition}\label{def:HingeRisk}
  Define
  \[
    \phi \quad
    \colon \R \to \R, \quad
    \phi(x) \coloneqq \max \{0, 1-x\}.
  \]
	Let $d \!\in\! \N$ and let $\mu$ be a Borel measure on $[0,1]^d$.
  For a (measurable) target concept $h \colon\! [0,1]^d \!\to\! \{0, 1\}$,
	we define the \emph{Hinge risk} of a (measurable) function $f \colon [0,1]^d \to [0, 1]$ as
	\begin{align*}
		\mathcal{E}_{\phi, \mu, h}(f)
    \coloneqq \mathbb{E}_{X \sim \mu}
                \Big[
                  \phi
                  \Big(
                    \bigl(2 h(X) - 1\bigr)
                    \cdot \bigl(2 f(X) - 1\bigr)
                  \Big)
                \Big]
    .
	\end{align*}

	For a sample $S = (x_i,y_i)_{i=1}^m \in \Lambda^m$, $m \in \N$,
	we define the \emph{empirical $\phi$-risk} of a function $f \colon [0,1]^d \to [0, 1]$ as
	\begin{align*}
		\widehat{\mathcal{E}}_{\phi, S}(f)
    \coloneqq \frac{1}{m}
              \sum_{i=1}^m
                \phi
                \big(
                  \bigl(2 y_i - 1\bigr) \cdot \bigl(2 f(x_i) - 1\bigr)
                \big)
    .
	\end{align*} 

	Finally, for a sample $S = (x_i,y_i)_{i=1}^m \in \Lambda^m$
	and a set $\mathcal{H} \subset \{h \colon [0,1]^d \to [0, 1]\}$,
	we call $f_S \in \mathcal{H}$ an empirical $\phi$-risk minimizer, if 
	\begin{align}
		 \widehat{\mathcal{E}}_{\phi, S}(f_S)
     = \min_{f \in \mathcal{H}}
         \widehat{\mathcal{E}}_{\phi, S}(f).
		\label{eq:empPhiRisk}
	\end{align}
\end{definition}

\begin{rem*}
  It is easy to see that the minimum in \eqref{eq:empPhiRisk} is attained
  if $\CalH \subset C([0,1]^d)$ is compact.
  For the sets $\NN(d, N, W, B)$ and $\NN_\ast (d, N, W, B)$, this is the case.
\end{rem*}

Below, we will establish bounds for estimating classification functions
with Barron regular decision boundaries via empirical $\phi$-risk minimization
over a suitable class of neural networks.
The proof of these bounds will be based on the following more general result,
taken from \cite{kim2021fast}.

\begin{theorem}[{\cite[Theorem~A.2]{kim2021fast}} (adapted, simplified)]\label{thm:TheTheoremThatSavesUs}
  Let $\Omega \subset [0,1]^d$ be measurable and $h = \Indicator_\Omega$.
  Assume that:
  \begin{enumerate}
    \item \label{enu:ApproximationCondition}
          There exists a positive sequence $(a_m)_{m\in \N}$
          with $a_m = \mathcal{O}(m^{-a_0})$ as $m \to \infty$ for some $a_0 > 0$
          and a sequence of function classes $(\mathcal{F}_m^\ast)_{m\in \N} \subset L^\infty([0,1]^d)$
          such that
          \[
            \forall \, m \in \N \quad
              \exists \, f_m^\ast \in \CalF_m^\ast : \quad
                \mathcal{E}_{\phi, \lebesgue, h}(f_m^\ast)
                \leq a_m
            .
          \]
          In addition, $0 \leq f_m(x) \leq 1$ for all $x \in [0,1]^d$, $f_m \in \CalF_m^\ast$,
          and $m \in \N$.

    \item \label{enu:EntropyCondition}
          There exists $C > 0$ and $(\delta_m^\ast)_{m\in \N} \subset (0, \infty)$ such that 
          the $\delta$-covering entropy $V_{\CalF_m^\ast, \| \cdot \|_{L^\infty}} (\delta)$
          of the class $\CalF_m^\ast$ with respect to the norm
          $\| \cdot \|_{L^\infty} = \| \cdot \|_{L^\infty([0,1]^d)}$ satisfies
          \[
            V_{\CalF_m^\ast, \| \cdot \|_{L^\infty}} (\delta_m^\ast)
            \leq C \cdot m \cdot \delta_m^\ast
            \qquad \forall \, m \in \N.
          \] 

    \item \label{enu:TechnicalCondition}
          There exists $\iota > 0$ such that the sequence $(\eps_m^\ast)_{m\in \N} \subset \R$
          with $\eps_m^\ast \coloneqq \max\{ a_m, \delta^\ast_m\}$, $m \in \N$, satisfies 
          \begin{equation}
            m^{1-\iota} \cdot \eps_m^\ast
            \gtrsim 1
            \qquad \forall \, m \in \N .
            \label{eq:TechnicalCondition}
          \end{equation}
  \end{enumerate}
  Under these assumptions, for a (random) training sample $S = \bigl(X_i, h(X_i)\bigr)_{i=1}^m$ with
  $X_i \IIDsim U([0,1]^d)$ for $i \in \{ 1,\dots,m \}$,
  and $f_S$ satisfying \eqref{eq:empPhiRisk} with $\CalH = \CalF_m^\ast$,
  it holds that 
  \[
    \mathbb{E}_S
    \bigl[
      \lebesgue
      \big(
        \big\{
          x \in [0,1]^d
          \,\,\colon\,\,
          \bigl(2 h(x) - 1\bigr) \neq \sign \bigl(2 f_S (x) - 1\bigr)
        \big\}
      \big)
    \bigr]
    \lesssim \eps_m^\ast,
  \]
  where the implied constant is independent of $h$ and only depends on the implied
  constants from Conditions \ref{enu:ApproximationCondition}--\ref{enu:TechnicalCondition}.
  Here, we use the convention $\sign(0) := 1$.
\end{theorem}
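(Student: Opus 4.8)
The plan is to derive the bound in two moves: a deterministic \emph{calibration} step that turns the excess Hinge risk into the misclassification error, and a \emph{stochastic} step that bounds the excess Hinge risk of the empirical minimizer by a localized empirical-process argument. The second step is precisely the content of \cite[Theorem~A.2]{kim2021fast}, so the proof ultimately amounts to checking that our Conditions~\ref{enu:ApproximationCondition}--\ref{enu:TechnicalCondition} are a special case of its hypotheses; below I sketch the mechanism.

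\emph{Step~1 (calibration).} First I would write $\eta(x) := 2 h(x) - 1 \in \{-1,1\}$ and, for $f \in \CalF_m^\ast$, $g_f(x) := 2 f(x) - 1 \in [-1,1]$ (using $0 \le f \le 1$). Since $|\eta g_f| \le 1$ we have $\phi(\eta(x) g_f(x)) = 1 - \eta(x) g_f(x)$, and a short two-case inspection (using $\sign(0) = 1$) shows $\phi(\eta(x) g_f(x)) \ge 1$ whenever $\eta(x) \ne \sign(g_f(x))$. Hence
\[
  \lebesgue\big(\big\{ x \in [0,1]^d \colon (2 h(x) - 1) \ne \sign(2 f(x) - 1) \big\}\big)
  \le \mathcal{E}_{\phi, \lebesgue, h}(f),
\]
and since $\mathcal{E}_{\phi, \lebesgue, h}(h) = 0$, the right-hand side is the \emph{excess} Hinge risk of $f$. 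It therefore suffices to show $\mathbb{E}_S\big[\mathcal{E}_{\phi, \lebesgue, h}(f_S)\big] \lesssim \eps_m^\ast$.

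\emph{Step~2 (bias--variance split and localization).} Next I would split, with $f_m^\ast$ from Condition~\ref{enu:ApproximationCondition},
\[
  \mathcal{E}_{\phi, \lebesgue, h}(f_S)
  = \mathcal{E}_{\phi, \lebesgue, h}(f_m^\ast)
    + \big[\widehat{\mathcal{E}}_{\phi, S}(f_S) - \widehat{\mathcal{E}}_{\phi, S}(f_m^\ast)\big]
    + \big[\widehat{\mathcal{E}}_{\phi, S}(f_m^\ast) - \mathcal{E}_{\phi, \lebesgue, h}(f_m^\ast)\big]
    + \big[\mathcal{E}_{\phi, \lebesgue, h}(f_S) - \widehat{\mathcal{E}}_{\phi, S}(f_S)\big] .
\]
The first term is $\le a_m$; the second bracket is $\le 0$ by \eqref{eq:empPhiRisk}; the third bracket has mean zero since $f_m^\ast$ is deterministic. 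So everything hinges on the fourth term. The key structural point is that the loss increment $\ell_f := \phi\big((2h(\cdot)-1)(2f(\cdot)-1)\big)$ takes values in $[0,2]$ (again because $0 \le f \le 1$), whence $\mathbb{E}_{\lebesgue}[\ell_f^2] \le 2\,\mathcal{E}_{\phi,\lebesgue,h}(f)$ — a Bernstein-type variance condition with an absolute constant. Combining this with the $L^\infty$-entropy bound of Condition~\ref{enu:EntropyCondition} (transported to the loss class via the $1$-Lipschitz dependence of $\ell_f$ on $f$) and peeling $\CalF_m^\ast$ into the shells $\{ f : 2^j \eps_m^\ast < \mathcal{E}_{\phi,\lebesgue,h}(f) \le 2^{j+1} \eps_m^\ast \}$, a Talagrand concentration estimate on each shell (equivalently, a local Rademacher complexity bound) gives $\mathbb{E}_S\big[\mathcal{E}_{\phi,\lebesgue,h}(f_S) - \widehat{\mathcal{E}}_{\phi,S}(f_S)\big] \lesssim \delta_m^\ast + \mathcal{O}(1/m)$; Condition~\ref{enu:TechnicalCondition} then absorbs the $\mathcal{O}(1/m)$ remainder into $\eps_m^\ast = \max\{ a_m, \delta_m^\ast \}$. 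Summing the four terms and invoking Step~1 yields the claim.

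\emph{Main obstacle.} The delicate point is the fourth term in Step~2: a naive uniform deviation bound over all of $\CalF_m^\ast$ only yields the slow rate $\sqrt{V_{\CalF_m^\ast}(\delta)/m}$, so obtaining the fast rate $\eps_m^\ast$ genuinely requires exploiting the variance bound via peeling/localization while carefully tracking the additive constants the concentration inequality produces — which is exactly why the technical Condition~\ref{enu:TechnicalCondition} is imposed. Since this localization argument is carried out in full in \cite{kim2021fast}, the most economical route in the present paper is to invoke \cite[Theorem~A.2]{kim2021fast} directly, after translating conventions (classifiers valued in $\{0,1\}$ here versus $\{-1,1\}$ there, via $t \mapsto 2t-1$) and noting that the minimum in \eqref{eq:empPhiRisk} is attained because $\NN(d, N, W, B)$ and $\NN_\ast(d, N, W, B)$ are compact.
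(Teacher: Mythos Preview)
Your proposal is correct and takes essentially the same approach as the paper: both reduce to invoking \cite[Theorem~A.2]{kim2021fast} after the $t \mapsto 2t-1$ translation, with the paper carrying out a mechanical verification of the conditions (N) (Tsybakov noise with $q=\infty$, equivalent to your variance bound), (A2)--(A3), and (A5) (via bracketing entropy dominated by $L^\infty$ entropy). Your Step~1 calibration and Step~2 bias--variance sketch are sound expository additions, but the paper does not spell these out and simply checks the hypotheses of the cited theorem directly.
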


\begin{proof}
  This essentially follows from \cite[Theorem~A.2]{kim2021fast}.
  Since the formal proof is somewhat technical and depends on many conditions and notions
  introduced in \cite{kim2021fast} that are otherwise immaterial for us,
  we defer the formal proof to \Cref{sec:PostponedTechnicalResults}.
\end{proof}

The next theorem establishes the previously announced bounds for
estimating classification functions with Barron regular decision boundaries
via empirical $\phi$-risk minimization over a suitable class of neural networks.

\begin{theorem}\label{thm:UpperBoundViaHingeLossMinimisation}
  There is a universal constant $\tau \geq 1$ such that the following holds:
	Let $\kappa > 0$, $d\in \N_{\geq 2}$, $M \in \N$, and $R \geq 1$.
	Let $h \in \RegClass_{B(R)}(d, M)$ and let $\lebesgue$ be the uniform
  (Lebesgue) measure on $[0,1]^d$.
	For each $m \in \N$, let
	\begin{align*}
    \widetilde{N}(m)
    &\coloneqq \big\lceil 144 \tau^2 d^4 M^2 R^2 \cdot m^{2/3} \big\rceil, \\
		N(m)
    &\coloneqq \big\lceil M \cdot \bigl(\widetilde{N}(m) + 4d + 2\bigr) \big\rceil, \\
		W(m)
    &\coloneqq \big\lceil 54 \, d^2 \cdot M \cdot \widetilde{N}(m) \big\rceil , \\
		B(m)
    &\coloneqq \Big\lceil\,\,
                 5 d \cdot \bigl(1 +  \tau \sqrt{d} R\bigr)
                 + 2 \raisebox{0.09cm}{$\sqrt{\vphantom{M_j} \smash{\raisebox{-0.12cm}{$\widetilde{N}(m)$}}}$}
               \,\,\Big\rceil
    .
	\end{align*}

	For each $m \in \N$, let $S$ be a training sample of size $m$;
  that is, $S = \bigl( X_i, h(X_i) \bigr)_{i=1}^m$ with $X_i \IIDsim \lebesgue$
  for $i \in \{ 1,\dots,m \}$.
  Furthermore, let $\Phi_{m,S} \in \NN_\ast \bigl(d, N(m), W(m), B(m)\bigr)$
  be an empirical Hinge-loss minimizer; that is,
	\[
    \widehat{\mathcal{E}}_{\phi, S}\bigl(\Phi_{m, S}\bigr)
    = \min_{f \in \NN_\ast (d, N(m), W(m), B(m))}
        \widehat{\mathcal{E}}_{\phi, S}(f).
	\]
	Then, with $H := \Indicator_{[1/2,\infty)}$, we have
	\begin{equation}
		\EE_S
    \Bigl[
      \EE_{X \sim \lebesgue}
        \bigl(
          \bigl|H (\Phi_{m, S} (X)) - h(X)\bigr|^2
        \bigr)
    \Bigr]
    =    \EE_S
         \big[
           \PP_{X \sim \lebesgue} \bigl(H(\Phi_{m,S}(X)) \neq h(X)\bigr)
         \big]
		\lesssim m^{-1/3 + \kappa} ,
    \label{eq:MainUpperBound}
	\end{equation}
  where the implied constant only depends on $d, M, R, \kappa, \tau$.
\end{theorem}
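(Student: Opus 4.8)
The plan is to invoke \Cref{thm:TheTheoremThatSavesUs} with the target $h = \Indicator_\Omega$ (recall that $h \in \RegClass_{B(R)}(d,M)$ means $h = \Indicator_\Omega$ for a measurable $\Omega \subset [0,1]^d$) and with the function classes $\CalF_m^\ast := \NN_\ast\bigl(d, N(m), W(m), B(m)\bigr)$. Every $f \in \CalF_m^\ast$ satisfies $0 \le f \le 1$ on $[0,1]^d$ by definition, and each such class is compact, so the empirical Hinge-risk minimizer $\Phi_{m,S}$ appearing in the statement exists. It therefore suffices to verify Conditions~\ref{enu:ApproximationCondition}--\ref{enu:TechnicalCondition} of \Cref{thm:TheTheoremThatSavesUs}, after which its conclusion will translate directly into \eqref{eq:MainUpperBound}. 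We may also assume $\kappa \in (0, 1/3)$, since the estimate \eqref{eq:MainUpperBound} for a given $\kappa$ trivially implies it for every larger value.

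\textbf{Approximation (Condition~\ref{enu:ApproximationCondition}).} We take $f_m^\ast := \Phi_{h, \widetilde N(m)}$, the network furnished by \Cref{thm:OurApproximationResult} applied with $N = \widetilde N(m)$. Comparing the size bounds guaranteed there — three hidden layers, at most $M(\widetilde N(m)+4d+2)$ neurons per layer, at most $54 d^2 M \widetilde N(m)$ weights, all of absolute value at most $5d(1+\tau\sqrt d R) + 2\sqrt{\widetilde N(m)}$, and values in $[0,1]$ — against the definitions of $N(m), W(m), B(m)$, which only enlarge each of these quantities, shows $f_m^\ast \in \CalF_m^\ast$. Moreover, the Hinge integrand $\phi\bigl((2h(x)-1)(2f_m^\ast(x)-1)\bigr)$ is everywhere bounded by $2$ (its argument lies in $[-1,1]$) and vanishes wherever $h(x) = \Phi_{h,\widetilde N(m)}(x)$, i.e.\ outside the exceptional set of \Cref{thm:OurApproximationResult}; hence $\mathcal{E}_{\phi,\lebesgue,h}(f_m^\ast) \le 2\,\lebesgue\bigl(\{h \ne \Phi_{h,\widetilde N(m)}\}\bigr) \le 12 \tau d^2 M R \cdot \widetilde N(m)^{-1/2} \le m^{-1/3} =: a_m$, where the last inequality is precisely what the choice $\widetilde N(m) = \lceil 144\tau^2 d^4 M^2 R^2 m^{2/3}\rceil$ is designed to give. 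Thus Condition~\ref{enu:ApproximationCondition} holds with $a_0 = 1/3$.

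\textbf{Entropy, technical condition, and conclusion.} From the definitions, $W(m) \asymp m^{2/3}$ and $B(m) \asymp m^{1/3}$ with constants depending only on $d, M, R, \tau$, so the entropy estimate of \Cref{rem:EntropyOfNNs} gives $V_{\CalF_m^\ast, \|\cdot\|_{L^\infty}}(\delta) \lesssim m^{2/3}\bigl(1 + \ln(1/\delta) + \ln(2m)\bigr)$ for $\delta \in (0,1]$. Taking $\delta_m^\ast := m^{-1/3+\kappa} \in (0,1]$ and using $\ln(1/\delta_m^\ast) \lesssim \ln(2m)$, the right-hand side is $\lesssim m^{2/3}\ln(2m) \lesssim m^{2/3+\kappa} = C\, m\, \delta_m^\ast$ (using $\ln(2m) \lesssim m^\kappa$ with a constant depending on $\kappa$), so Condition~\ref{enu:EntropyCondition} holds. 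Consequently $\eps_m^\ast := \max\{a_m, \delta_m^\ast\} = m^{-1/3+\kappa}$, and Condition~\ref{enu:TechnicalCondition} holds with $\iota := 1/2$ since $m^{1-\iota}\eps_m^\ast = m^{2/3-1/2+\kappa} \ge 1$. \Cref{thm:TheTheoremThatSavesUs} now yields $\EE_S\bigl[\lebesgue\bigl(\{x : 2h(x)-1 \ne \sign(2\Phi_{m,S}(x)-1)\}\bigr)\bigr] \lesssim m^{-1/3+\kappa}$, with implied constant depending only on $d, M, R, \kappa, \tau$. Finally, with the convention $\sign(0)=1$ and $H = \Indicator_{[1/2,\infty)}$, one checks for every $x \in [0,1]^d$ that $\sign(2\Phi_{m,S}(x)-1) = 2h(x)-1$ exactly when $H(\Phi_{m,S}(x)) = h(x)$ (both $\{0,1\}$-valued), while $|H(\Phi_{m,S}(x)) - h(x)|^2 = \Indicator_{\{H(\Phi_{m,S}(x)) \ne h(x)\}}$; therefore both sides of \eqref{eq:MainUpperBound} equal $\EE_S[\lebesgue(\{H \circ \Phi_{m,S} \ne h\})]$, which is the quantity just bounded, completing the proof.

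The only genuinely delicate point is the simultaneous choice of the network-size parameters: $\widetilde N(m)$ must be large enough that the approximation error $a_m$ from \Cref{thm:OurApproximationResult} decays, yet small enough that the covering entropy of $\CalF_m^\ast$ from \Cref{rem:EntropyOfNNs} stays below the budget $m\,\delta_m^\ast$; the exponent $2/3$ in $\widetilde N(m) \asymp m^{2/3}$ is exactly the value equalizing the two resulting $m^{-1/3}$-type rates, and everything else — the parameter-count comparison, the entropy estimate, and the cosmetic passage from the sign-based misclassification set of \cite{kim2021fast} to the thresholded network $H \circ \Phi_{m,S}$ — is routine bookkeeping.
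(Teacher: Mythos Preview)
Your proof is correct and follows essentially the same route as the paper: both invoke \Cref{thm:TheTheoremThatSavesUs} with $\CalF_m^\ast = \NN_\ast(d,N(m),W(m),B(m))$, $a_m = m^{-1/3}$, and $\delta_m^\ast = m^{-1/3+\kappa}$, verify the approximation condition via \Cref{thm:OurApproximationResult}, the entropy condition via \Cref{rem:EntropyOfNNs}, and then translate the sign-based misclassification bound into the thresholded form $H\circ\Phi_{m,S}$. The only cosmetic differences are your explicit reduction to $\kappa<1/3$ (which the paper leaves implicit) and your choice $\iota=1/2$ versus the paper's $\iota=2/3+\kappa$; both choices satisfy \eqref{eq:TechnicalCondition}.
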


\begin{proof}
  We begin by proving the first part of \Cref{eq:MainUpperBound}.
  To this end, note for $x \in \R^d$ that $| H \circ \Phi_{m, S}(x) - h(x)| \in \{0,1\}$,
  and hence
	\[
		\EE_{X \sim \lebesgue}\bigl(\bigl| H (\Phi_{m, S}(X)) - h(X) \bigr|^2\bigr)
		= \PP_{X \sim \lebesgue} \big( H(\Phi_{m,S}(X)) \neq h(X) \big)
    .
	\]

  Now, note because of our convention $\sign(0) = 1$ that
  \(
    \sign(x)
    = 2 \cdot \Indicator_{[0,\infty)} (x) - 1
    .
  \)
  Hence, we see for $y \in \{ 0, 1 \}$ and $x \in \R$ that
  \[
    \sign(2 x - 1) \neq 2 y - 1
    \quad \Longleftrightarrow \quad
    \Indicator_{[0,\infty)} ( 2 x - 1 ) \neq y
    \quad \Longleftrightarrow \quad
    \Indicator_{[1/2,\infty)} (x) \neq y
    \quad \Longleftrightarrow \quad
    H (x) \neq y
    .
  \]
  Applying this to $x = \Phi_{m,S}(X)$ and $y = h(X)$, we thus see
  \begin{equation}
		\PP_{X \sim \lebesgue} \big( H(\Phi_{m,S}(X)) \neq h(X) \big)
		= \PP_{X \sim \lebesgue} \big( \sign(2 \Phi_{m,S}(X) - 1) \neq 2 h(X) - 1 \big)
    .
    \label{eq:TechnicalStep}
  \end{equation}

  Thus, it remains to verify that the assumptions of \Cref{thm:TheTheoremThatSavesUs} are satisfied
  for
  \[
    \CalF_m^\ast := \NN_\ast (d, N(m), W(m), B(m)),
    \qquad
    a_m := m^{-1/3},
    \qquad
    \delta_m^\ast := m^{-1/3 + \kappa}
    ,
  \]
  and $\eps_m^\ast := \max \{ a_m, \delta_m^\ast \}$ as well as $a_0 \coloneqq 1/3 > 0$.
  We first note that the condition $0 \leq f_m(x) \leq 1$ for all $f_m \in \CalF_m^\ast$
  and $x \in [0,1]^d$ holds by definition of $\NN_\ast$.
	Moreover, by \Cref{thm:OurApproximationResult}, it holds that for all $m \in \N$
	there exists $f_m \in \CalF_m^\ast$ such that 
	\[
    \lebesgue (\{x \in [0,1]^d \colon h(x) \neq f_m(x)\})
    \leq 6 \tau d^2 M R \cdot [\widetilde{N}(m)]^{-1/2}
    \leq m^{-1/3} / 2
    =    a_m / 2
    .
	\]
  Now, if $f_m(x) = h(x)$ then we see because of $h(x) \in \{ 0,1 \}$
  that $\phi\bigl( (2 h (x) - 1) (2 f_m(x) - 1) \bigr) \!=\! \phi( 1 ) \!=\! 0$.
  Furthermore, note that $(2 h (x) - 1) (2 f_m(x) - 1) \in [-1,1]$,
  since $0 \leq f_m(x) \leq 1$ and $h(x) \in \{ 0,1 \}$ for $x \in [0,1]^d$.
  Because of $0 \leq \phi(t) \leq 2$ for $t \in [-1,1]$, we thus see overall that
	\[
    \mathcal{E}_{\phi, \lebesgue, h}(f_m)
    = \EE_{X \sim \lebesgue} \bigl[\phi \big( (2 h(X) - 1) (2 f_m(X) - 1) \big)\bigr]
    \leq 2 \cdot \lebesgue \big( \big\{ x \in [0,1]^d \colon h(x) \neq f_m(x) \big\} \big)
    \leq a_m
    .
	\]
	
  Next, we note for $\CalF_m$ thanks to \Cref{rem:EntropyOfNNs} that
	\begin{align*}
    V_{\CalF_m^\ast, \| \cdot \|_{L^\infty}} (\delta_m^\ast)
    & \leq V_{[0,1]^d, \| \cdot \|_{L^\infty}}(\delta_m^\ast; N(m), W(m), B(m)) \\
    & \leq W(m)
           \cdot \big(
                   10
		+ \, \ln(1 / \delta_m^\ast)
 		+ 5 \, \ln(B(m))
                   + 5 \, \ln (\max\{ d, W(m) \})
                 \big) \\
    & = W(m) \cdot \big( 10 + \ln(1 / \delta_m^\ast) + 5 \, \ln(B(m)) + 5 \, \ln (W(m)) \big) \\
    & \lesssim_{d,M,R,\tau,\kappa} m^{2/3} \cdot \big( 1 + \ln(m) + \ln(m) + \ln(m) \big) \\
    & \lesssim m^{2/3} \cdot (1 + \ln(m))
      \lesssim m \cdot m^{-1/3 + \kappa}
      =        m \cdot \delta_m^\ast
	\end{align*}
	for all $m \in \N$, as required in \Cref{thm:TheTheoremThatSavesUs}.
	With the choices of $\delta_m^\ast$ and $a_m$ as above,
  we see for $\iota \coloneqq \frac{2}{3} + \kappa$ that
	\[
		\eps_m^\ast
    = \delta_m^\ast
    = m^{-1/3 + \kappa}
    = m^{\iota - 1}
    \quad \text{and hence} \quad
    m^{1 - \iota} \cdot \eps_m^\ast \gtrsim 1
    \quad \text{for all } m \in \N ,
	\]
	establishing the third assumption of \Cref{thm:TheTheoremThatSavesUs}.
  Thus, \Cref{thm:TheTheoremThatSavesUs} shows that
  \[
    \PP_{X \sim \lebesgue}
    \big(
      \sign(2 \Phi_{m,S}(X) - 1) \neq 2 h(X) - 1
    \big)
    \lesssim \eps_m^{\ast}
    = \delta_m^{\ast}
    = m^{-1/3 + \kappa}
    ,
  \]
  with the implied constant only depending on $d,M,R,\kappa,\tau$.
  In view of \Cref{eq:TechnicalStep}, this establishes the second part of
  \Cref{eq:MainUpperBound} and thus completes the proof.
\end{proof}

\begin{remark}
	Note that since $\kappa > 0$ can be chosen arbitrarily small,
	\Cref{thm:UpperBoundViaHingeLossMinimisation} yields an upper bound
	on the expected (squared) $L^2$ risk of an order as close to $m^{-1/3}$ as desired. 
	Moreover, by \Cref{cor:LowerBdBarron}, omitting logarithmic factors,
  the best estimation rate that can be achieved by any estimator is 
	\[
		\CalO\bigl(m^{-\frac{d+1}{3 d - 1}}\bigr) \quad \text{as} \quad m \to \infty .
	\]
	Since $\frac{d+1}{3 d - 1} \to 1/3$ as $d \to \infty$, this implies
	that for sufficiently large $d$ the
    upper bound of \Cref{thm:UpperBoundViaHingeLossMinimisation}
    comes arbitrarily close to the lower bound of \Cref{cor:LowerBdBarron}.
\end{remark}

 \appendix

\section{Postponed technical results}%
\label{sec:PostponedTechnicalResults}

\subsection{Explicit scaling relation for an implicit condition of Barron and Yang}

\begin{lemma}\label{lem:EpsilonAsymptotic}
  Let $V : (0,\infty) \to [0,\infty)$ be continuous and non-increasing
  and such that $V(\eps) \to \infty$ as $\eps \downarrow 0$.
  Then the following hold:
  \begin{enumerate}
    \item For each $n \in \N$, there exists a \emph{unique} $\eps_n \in (0,\infty)$
          satisfying $\eps_n^2 = V(\eps_n) / n$.
          Furthermore, $\eps_n \to 0$ as $n \to \infty$.

    \item If $V$ is of the form
          \begin{align} \label{eq:alternativeFormOfV}
          V(\eps)
            = C \cdot \bigl(\max \{ 1, \eps^{-1} \}\bigr)^{\alpha}
                \cdot \ln^\beta (2 + \eps^{-1})
          \end{align}
          for certain $C, \alpha > 0$ and $\beta \geq 0$, then there exist
          $c_{\flat},c_{\sharp} > 0$ satisfying
          \begin{equation}
                 c_{\flat}
                 \cdot \big( \ln^\beta(2 n) / n \big)^{1/(2 + \alpha)}
            \leq \eps_n
            \leq c_{\sharp}
                 \cdot \big( \ln^\beta(2 n) / n \big)^{1/(2 + \alpha)}
            \qquad \forall \, n \in \N .
            \label{eq:EpsilonNAsymptotic}
          \end{equation}
  \end{enumerate}
\end{lemma}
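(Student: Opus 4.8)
The plan is to prove the two parts in order, deducing the second from the first.

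For part~1, I would fix $n \in \N$ and consider $\Phi_n : (0,\infty) \to \R$, $\Phi_n(\eps) := \eps^2 - V(\eps)/n$. Since $\eps \mapsto \eps^2$ is continuous and strictly increasing while $\eps \mapsto -V(\eps)/n$ is continuous and non-decreasing (as $V$ is non-increasing), $\Phi_n$ is continuous and strictly increasing. Moreover $\Phi_n(\eps) \to -\infty$ as $\eps \downarrow 0$ because $V(\eps) \to \infty$, and $\Phi_n(\eps) \to +\infty$ as $\eps \to \infty$ because $\eps^2 \to \infty$ while $V(\eps)/n \le V(1)/n < \infty$. The intermediate value theorem furnishes a zero, and strict monotonicity makes it unique; call it $\eps_n$. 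To obtain $\eps_n \to 0$, I would first note that $(\eps_n)_n$ is non-increasing: since $V \ge 0$, we have $\Phi_{n+1}(\eps_n) = \eps_n^2 - V(\eps_n)/(n+1) \ge \eps_n^2 - V(\eps_n)/n = 0 = \Phi_{n+1}(\eps_{n+1})$, so strict monotonicity of $\Phi_{n+1}$ forces $\eps_{n+1} \le \eps_n$. Hence $\eps_n \downarrow L \ge 0$; if $L > 0$ then $V(\eps_n) \le V(L) < \infty$ and therefore $\eps_n^2 = V(\eps_n)/n \le V(L)/n \to 0$, contradicting $\eps_n \ge L$.

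For part~2, observe first that a $V$ of the form \eqref{eq:alternativeFormOfV} is continuous, non-increasing (split at $\eps = 1$: on $(0,1)$ both $\eps^{-\alpha}$ and $\ln^\beta(2+\eps^{-1})$ increase as $\eps \downarrow 0$, and on $[1,\infty)$ the factor $(\max\{1,\eps^{-1}\})^\alpha$ is constant while $\ln^\beta(2+\eps^{-1})$ is non-increasing since $\beta \ge 0$ and the argument stays $\ge 2$), and blows up at $0$, so part~1 applies. Since $\eps_n \to 0$, there is $n_0$ with $\eps_n < 1$ for $n \ge n_0$, and for such $n$ the relation $\eps_n^2 = V(\eps_n)/n$ reads
\[
  \eps_n^{2+\alpha} = \frac{C}{n}\,\ln^\beta(2 + \eps_n^{-1}).
\]
I would establish \eqref{eq:EpsilonNAsymptotic} for $n \ge n_0$ and then absorb the finitely many small $n$ into the constants (for each such $n$, both $\eps_n$ and $(\ln^\beta(2n)/n)^{1/(2+\alpha)}$ are fixed positive numbers). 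For the upper bound, the crude estimate $\ln(2+\eps_n^{-1}) \ge \ln 2$ gives $\eps_n \ge (C(\ln 2)^\beta/n)^{1/(2+\alpha)}$, hence $\eps_n^{-1} \lesssim n$, hence $\ln(2+\eps_n^{-1}) \lesssim \ln(2n)$ uniformly in $n$; feeding this into the displayed equation yields $\eps_n \le c_{\sharp}(\ln^\beta(2n)/n)^{1/(2+\alpha)}$. For the lower bound I would plug this upper bound back in: it gives $\eps_n^{-1} \gtrsim (n/\ln^\beta(2n))^{1/(2+\alpha)} \gtrsim n^{1/(2(2+\alpha))}$ (using $\ln^\beta(2n) \le n^{1/2}$ for large $n$), so $\eps_n^{-1}$ grows at least polynomially and hence $\ln(2+\eps_n^{-1}) \gtrsim \ln(2n)$; inserting this into the displayed equation gives $\eps_n \ge c_{\flat}(\ln^\beta(2n)/n)^{1/(2+\alpha)}$.

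I do not expect a genuine obstacle; the work is bookkeeping. Concretely, the elementary comparisons $\ln(2 + c\,n^\gamma) \asymp \ln(2n)$ for fixed $c,\gamma > 0$ must be justified --- these hold because $t \mapsto \ln(2+t)$ is slowly varying, so the relevant ratios converge to finite positive limits as $n \to \infty$ and therefore stay bounded above and below over all $n \ge 1$ --- and one must take care to invoke the already-proved upper bound when deriving the lower bound, so that the argument is not circular. The case $\beta = 0$ is trivial, since the defining equation then pins $\eps_n$ down exactly for $n \ge n_0$, and can be dispatched in a line.
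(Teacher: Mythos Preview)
Your proposal is correct. Part~1 is essentially the same as the paper's (your $\Phi_n$ is $W_n/n$), though you additionally show $(\eps_n)$ is monotone, which the paper replaces by a subsequence contradiction argument.

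For Part~2 the two proofs take genuinely different routes. The paper \emph{guesses and verifies}: it writes down explicit candidates $\eps_n^\flat = c_\flat (\ln^\beta n / n)^{1/(2+\alpha)}$ and $\eps_n^\sharp = c_\sharp (\ln^\beta n / n)^{1/(2+\alpha)}$ with carefully chosen constants, then checks directly that $W_n(\eps_n^\flat)\le 0\le W_n(\eps_n^\sharp)$ for large $n$, whence monotonicity of $W_n$ traps $\eps_n$ between them. You instead \emph{bootstrap} from the defining equation $\eps_n^{2+\alpha} = (C/n)\ln^\beta(2+\eps_n^{-1})$: a crude lower bound on $\eps_n$ (from $\ln(2+\eps_n^{-1})\ge\ln 2$) controls $\eps_n^{-1}$ and hence the logarithm from above, giving the sharp upper bound; feeding that back controls the logarithm from below, yielding the lower bound. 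Your method avoids having to guess the right constants in advance and makes the mechanism (that only the size of $\ln(2+\eps_n^{-1})$ matters) transparent, at the cost of the extra slow-variation lemma $\ln(2+cn^\gamma)\asymp\ln(2n)$. The paper's method is more self-contained but requires some algebraic care in the verification step. Both are standard and of comparable length.
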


\begin{proof}
  \textbf{Part~1:}
  Define $W_n : (0,\infty) \to \R, \eps \mapsto n \, \eps^2 - V(\eps)$.
  Then $W_n$ is continuous and strictly increasing as the sum of a strictly increasing
  and a non-decreasing function.
  Furthermore, it is easy to see $W_n (\eps) \to -\infty$ as $\eps \downarrow 0$
  and $W_n(\eps) \geq n \, \eps^2 - V(1)$ for $\eps \geq 1$, and hence $W_n(\eps) \to \infty$
  as $\eps \to \infty$.
  By the intermediate value theorem, this implies existence of $\eps_n \in (0,\infty)$
  satisfying $W_n(\eps_n) = 0$, and this is equivalent to $\eps_n^2 = V(\eps_n) / n$.
  Finally, $\eps_n$ is unique since $W_n$ is strictly increasing.

  To see that $\eps_n \to 0$, assume towards a contradiction that there exist $c > 0$
  and a subsequence $(\eps_{n_\ell})_{\ell \in \N}$ satisfying $\eps_{n_\ell} \geq c$
  for all $\ell \in \N$.
  Since $V$ is non-increasing, this implies $V(\eps_{n_\ell}) \leq V(c)$ and hence
  $c^2 \leq \eps_{n_\ell}^2 = V(\eps_{n_\ell}) / n_\ell \leq V(c) / n_\ell \to 0$
  as $\ell \to \infty$, which is the desired contradiction.

  \medskip{}

  \textbf{Part~2:}
  Note that $\eps \mapsto \big( \max \{ 1,\eps^{-1} \} \big)^{\alpha}$
  and $\eps \mapsto  \ln^\beta(2 + \eps^{-1})$ are both non-increasing
  and positive functions, so that $V$ given by \eqref{eq:alternativeFormOfV} is non-increasing
  and continuous.
  Furthermore, it is easy to see $V(\eps) \to \infty$ as $\eps \downarrow 0$,
  so that the given $V$ satisfies the general assumptions of the lemma.

  For brevity, set $\theta := \frac{1}{2 + \alpha} > 0$ and define
  \[
    c_{\flat}
    := \min \bigl\{ 1, \big( (\theta / 2)^\beta \cdot C \big)^{\theta} \bigr\}
    \qquad \text{and} \qquad
    c_{\sharp}
    := \max \bigl\{ 1, \big( (2 \theta)^\beta \cdot C \big)^{\theta} \bigr\}
  \]
  as well as
  \[
    \eps_n^{\flat} := c_{\flat} \cdot \big( \ln^\beta (n) / n \big)^{\theta}
    \qquad \text{and} \qquad
    \eps_n^{\sharp} := c_{\sharp} \cdot \big( \ln^\beta (n) / n \big)^{\theta}
    \qquad \text{for } n \in \N_{\geq 2}.
  \]
  It is not hard to see that there exists $n_0 = n_0(C,\alpha,\beta) \in \N_{\geq 3}$
  such that $0 < \eps_n^{\flat} \leq \eps_n^{\sharp} \leq \frac{1}{2}$ for all $n \geq n_0$.
  We will show below with $W_n(\eps) = n \, \eps^2 - V(\eps)$ as in the proof of the first part
  that there exists $n_1 = n_1(C,\alpha,\beta) \in \N$ such that
  $W_n(\eps_{n}^{\flat}) \leq 0 \leq W_n(\eps_n^{\sharp})$ for $n \geq n_1$.
  Since $W_n$ is strictly increasing with $W_n(\eps_n) = 0$, this will then imply
  $\eps_n^{\flat} \leq \eps_n \leq \eps_n^{\sharp}$ for $n \geq n_1$,
  which easily implies validity of the target estimate \eqref{eq:EpsilonNAsymptotic},
  possibly after adjusting $c_{\flat}$ and $c_{\sharp}$.

  To see $W_n(\eps_n^{\flat}) \leq 0$, first note for $n \geq n_1$
  and $n_1 = n_1(C,\alpha,\beta) \geq n_0$ sufficiently large that
  \[
    \ln \big( (\eps_n^{\flat})^{-1} \big)
    = - \ln c_{\flat}
      - \theta \beta \ln(\ln(n))
      + \theta \, \ln(n)
    \geq \frac{\theta}{2} \ln(n) .
  \]
  Using this estimate and recalling that $\eps_n^{\flat} \leq \frac{1}{2} \leq 1$,
  we see
  \begin{align*}
    W_n(\eps_n^{\flat})
    & = n \cdot (\eps_n^{\flat})^2 - V(\eps_n^{\flat}) \\
    & \leq n \cdot (\eps_n^{\flat})^2
           - C
             \cdot (\eps_n^{\flat})^{-\alpha}
             \cdot \ln^\beta \bigl( (\eps_n^{\flat})^{-1}\bigr) \\
    & \leq c_{\flat}^2
           \cdot n^{1 - 2 \theta}
           \cdot \ln^{2 \beta \theta} (n)
           - c_{\flat}^{-\alpha} C
             \cdot n^{\alpha \theta}
             \cdot \ln^{-\alpha \beta \theta}(n)
             \cdot (\theta/2)^{\beta} \cdot \ln^{\beta} (n) \\
    & =    n^{\alpha \theta}
           \cdot \ln^{2 \beta \theta} (n)
           \cdot c_{\flat}^{-\alpha}
           \cdot \big(
                   c_{\flat}^{2 + \alpha}
                   - C \cdot (\theta/2)^\beta
                 \big)
      \leq 0,
  \end{align*}
  thanks to our choice of $\theta$ and $c_{\flat}$.

  On the other hand, we note for $n \geq n_1 \geq n_0 \geq 3 \geq e$
  that $\ln(\ln(n)) \geq \ln(\ln(e)) = 0$.
  In combination with $c_{\sharp} \geq 1$, this shows
  \[
    \ln \big( (\eps_n^{\sharp})^{-1} \big)
    = - \ln(c_{\sharp})
      - \beta \theta \ln (\ln (n))
      + \theta \ln (n)
    \leq \theta \ln(n) .
  \]
  Finally, recall that $0 < \eps_n^{\sharp} \leq \frac{1}{2}$ and hence
  \[
    \ln \bigl( 2 + (\eps_n^{\sharp})^{-1} \bigr)
    \leq \ln \big( (\eps_n^{\sharp})^{-2} \big)
    = 2 \, \ln \bigl( (\eps_n^{\sharp})^{-1} \bigr)
  \]
  and $2 + (\eps_n^{\sharp})^{-1} \leq 2 \cdot (\eps_n^{\sharp})^{-1}$
  for $n \geq n_0$.
  Therefore, we see overall for $n \geq n_1 \geq n_0$ that
  \begin{align*}
    W_n(\eps_n^{\sharp})
    & =    n \cdot (\eps_n^{\sharp})^2 - V(\eps_n^{\sharp}) \\
    & \geq c_{\sharp}^2
           \cdot n^{1 - 2 \theta}
           \cdot \ln^{2 \beta \theta} (n)
           - 2^\beta C
             \cdot (\eps_n^{\sharp})^{-\alpha}
             \cdot \ln^\beta \bigl( (\eps_n^{\sharp})^{-1}\bigr) \\
    & \geq c_{\sharp}^2
           \cdot n^{\alpha \theta}
           \cdot \ln^{2 \beta \theta} (n)
           - 2^\beta \theta^\beta C
             \cdot c_{\sharp}^{-\alpha}
             \cdot n^{\alpha \theta}
             \cdot \ln^{\beta (1 - \alpha \theta)} (n) \\
    & =    c_{\sharp}^{-\alpha} \cdot n^{\alpha \theta} \ln^{2 \beta \theta} (n)
           \cdot \big(
                   c_{\sharp}^{2 + \alpha}
                   - (2 \theta)^\beta C
                 \big)
      \geq 0
    ,
  \end{align*}
  by our choice of $\theta$ and $c_{\sharp}$.
  As seen above, this completes the proof.
\end{proof}

\subsection{Proof of \texorpdfstring{\Cref{remark:NewBarronIsSpecialCaseOfOldBarron}}{Remark \ref{remark:NewBarronIsSpecialCaseOfOldBarron}}}

\begin{proof}
Since the definition of the set $\mathcal{BB}_{B,M}(\R^d)$ in \cite[Definition~3.3]{OurBarronPaper}
is somewhat involved, the proof unrolls the definition step by step.

\smallskip{}

\textbf{Step~1 (Barron functions as in \cite{OurBarronPaper}):}
In \cite[Definition~2.1]{OurBarronPaper}, given a subset $X \subset \R^d$
with non-empty interior and some $x_0 \in X$ and $C > 0$,
a function $f : X \to \R$ is said to belong to $\mathcal{B}_C (X,x_0)$
if there exist a measurable function $F : \R^d \to \CC$ and some $c \in [-C, C]$ satisfying
\[
  f(x)
  = c + \int_{\R^d}
          \bigl(e^{i \langle x, \xi \rangle} - e^{i \langle x_0,\xi \rangle}\bigr) \cdot F(\xi)
        \, d \xi
  \qquad \text{and} \qquad
  \int_{\R^d}
    |\xi|_{X,x_0} \cdot |F(\xi)|
  d \xi
  \leq C ,
\]
where $|\xi|_{X,x_0} = \sup_{x \in X} |\langle \xi, x - x_0 \rangle|$.
Now, for $X = [0,1]^d$ and $x_0 = 0$, it is easy to see that
$|\xi|_{X,x_0} \leq \| \xi \|_{\ell^1} \leq \sqrt{d} |\xi|$.
Using this estimate, it follows that 
\[
  \pm B(C) \subset \mathcal{B}_{\sqrt{d} C} ([0,1]^d, 0) .
\]

\smallskip{}

\textbf{Step~2 (Barron approximation set):}
In \cite[Definition~3.1]{OurBarronPaper}, given $X \subset \R^d$ with non-empty
interior and $C > 0$, the \emph{Barron approximation set}
$\mathcal{BA}_C (X) \subset \{ f : X \to \R \}$ is defined.
Its precise definition is immaterial for us; we only need to know that there exists an
\emph{absolute constant} $\tau > 0$ satisfying
\[
  \mathcal{B}_C (X, x_0)
  \subset \mathcal{BA}_{\tau C} (X)
  \qquad \forall \, x_0 \in X ;
\]
this is shown in \cite[Remark~3.2 a)]{OurBarronPaper}.
Furthermore, \cite[Remark~3.2 b)]{OurBarronPaper} shows that if $f \in \mathcal{BA}_C (X)$
and $Y \subset X$ has non-empty interior, then $f|_Y \in \mathcal{BA}_C (Y)$.
In combination with Step~1, we thus see that
\[
  \forall \, f \in B(C) \text{ and } Q \subset [0,1]^d \text{ with non-empty interior}: \quad
  (\pm f)|_Q \in \mathcal{BA}_{\tau \sqrt{d} C} (Q)
  .
\]

\medskip{}

\textbf{Step~3 (Sets with Barron boundary as in \cite{OurBarronPaper}):}
For $x \in \R^d$ and $j \in \{ 1,\dots,d \}$,
let us write $x^{(j)} := (x_1,\dots,x_{j-1},x_{j+1},\dots,x_d) \in \R^{d-1}$.
Furthermore, for $x,y \in \R^d$, let us write $x \leq y$
if $x_i \leq y_i$ for all $i \in \{ 1,\dots,d \}$.
In \cite[Definition~3.3]{OurBarronPaper}, given $B > 0$ and $a,b \in \R^d$ with $a \leq b$
and setting $Q = [a,b] := \{ x \in \R^d \colon a \leq x \leq b \}$
and $Q^{(j)} := [a^{(j)}, b^{(j)}]$,
a function $F : Q \to \{ 0,1 \}$ is said to belong to the class
$\mathcal{BH}_B (Q)$ of ``Barron horizon functions'' if
\[
  \exists \, j \in \{ 1,\dots,d \},
             \theta \in \{ \pm 1 \},
             \text{ and } f \in \mathcal{BA}_B \bigl([a^{(j)}, b^{(j)}]\bigr): \quad
    \forall \, x \in Q: \quad
      F(x) = \Indicator_{\theta x_j \leq f(x^{(j)})}
  .
\]
Finally, a compact set $\Omega \subset \R^d$ is said to belong to the set
$\mathcal{BB}_{B,M}(\R^d)$ of sets with ``Barron class boundary'', if there exist
(closed, axis-aligned, non-degenerate) rectangles $Q_1,\dots,Q_M \subset \R^d$ with pairwise
disjoint interiors and with $\Omega \subset \bigcup_{i=1}^M Q_i$
and such that $\Indicator_{Q_i \cap \Omega} \in \mathcal{BH}_B (Q_i)$
for all $i \in \{ 1,\dots,M \}$.

\medskip{}

\textbf{Step~4 (Completing the proof):}
Let $\Omega \subset [0,1]^d$ be compact with $\Indicator_\Omega \in \RegClass_{B(C)} (d,M)$.
That is, there exist (closed, axis-aligned, non-degenerate) rectangles $Q_1,\dots,Q_M \subset [0,1]^d$
with pairwise disjoint interiors and with $\Omega \subset \bigcup_{i=1}^M Q_i$
and such that for each $i \in \{ 1,\dots,M \}$, there exist $f_i \in H_{B(C)}$
and a permutation matrix $P_i \in \R^{d \times d}$ such that
\[
  \Indicator_{\Omega} = f_i \circ P_i
  \qquad \text{or} \qquad
  \Indicator_{\Omega} = 1 - f_i \circ P_i
  \quad \text{almost everywhere on } Q_i
  .
\]
Fix $i \in \{ 1,\dots,M \}$.
Since $f_i \in H_{B(C)}$, there exists $b : [0,1]^{d-1} \to [0,1]$
satisfying $b = b_i \in B(C)$ and $f_i (x) = \Indicator_{B(x_1,\dots,x_{d-1}) \leq x_d}$
for all $x \in [0,1]^d$.

Given a permutation $\pi \in S_d$ and $x \in \R^d$,
define $x^\pi \in \R^d$ by $x^\pi_i = x_{\pi^{-1}(i)}$
and note $x^{\pi \circ \sigma} = (x^{\sigma})^{\pi}$
for permutations $\pi,\sigma \in S_d$.
We can then choose $\pi \in S_d$ satisfying $P_i x = x^\pi$ for all $x \in \R^d$.
Set $\widetilde{\pi} := \pi \circ \sigma$, where $\sigma \in S_d$ is the permutation
that swaps $d$ and $j := \pi^{-1}(d)$ and leaves the rest of $\{ 1,\dots,d \}$ fixed.
Note $\widetilde{\pi}(d) = \pi (\sigma(d)) = \pi(\pi^{-1}(d)) = d$, meaning that
$\widetilde{\pi}$ can be seen as an element of $S_{d-1}$.
Let $\widetilde{P} \in \R^{(d-1) \times (d-1)}$ be the associated permutation matrix
satisfying $\widetilde{P} z = z^{\widetilde{\pi}}$ for all $z \in \R^{d-1}$.

Given $x \in [0,1]^d$, let $y := x^{\sigma^{-1}}$, noting that
\(
  x^{\pi}
  = x^{\widetilde{\pi} \circ \sigma^{-1}}
  = (x^{\sigma^{-1}})^{\widetilde{\pi}}
  = y^{\widetilde{\pi}}
\)
and $x_d^\pi = x_{\pi^{-1}(d)} = x_j$, and furthermore
\[
  (y_1,\dots,y_{d-1})
  = (x_1,\dots,x_{j-1},x_d,x_{j+1}, \dots, x_{d-1})
  = S (x_1,\dots,x_{j-1},x_{j+1},\dots,x_d)
  = S x^{(j)}
\]
for a suitable permutation matrix $S \in \R^{(d-1) \times (d-1)}$.
Directly from \Cref{def:IntroBarronClass}, it follows that
$\widetilde{b} := b \circ \widetilde{P} \circ S \in B(C)$.
Hence,
\[
  \begin{split}
    (f_i \circ P_i)(x)
    & = f_i (x^{\pi})
      = \Indicator_{b(x_1^\pi, \dots, x_{d-1}^\pi) \leq x_d^\pi}
      = \Indicator_{b(y_1^{\widetilde{\pi}}, \dots, y_{d-1}^{\widetilde{\pi}}) \leq x_j} \\
    & = \Indicator_{b \circ \widetilde{P} (y_1,\dots,y_{d-1}) \leq x_j}
      = \Indicator_{b \circ \widetilde{P} \circ S (x^{(j)}) \leq x_j}
      = \Indicator_{\widetilde{b}(x^{(j)}) \leq x_j}
      = \Indicator_{- x_j \leq -\widetilde{b}(x^{(j)})}
    \qquad \forall \, x \in [0,1]^d
    .
  \end{split}
\]
Now, if $\Indicator_{\Omega} = f_i \circ P_i$ almost everywhere on $Q_i$, then we see that
$\Indicator_{\Omega \cap Q_i}(x) = \Indicator_{- x_j \leq -\widetilde{b}(x^{(j)})}$
for \emph{all(!)} $x \in Q_i$, as can be seen by compactness of $\Omega$
and continuity of $\widetilde{b}$.
Furthermore, as seen in Step~2, we have
$(-\widetilde{b})|_{Q_i^{(j)}} \in \mathcal{BA}_{\tau \sqrt{d} C}\bigl(Q_i^{(j)}\bigr)$
and thus $\Indicator_{\Omega \cap Q_i} \in \mathcal{BH}_{\tau \sqrt{d} C} (Q_i)$.

If otherwise $\Indicator_{\Omega} = 1 - f_i \circ P_i$ almost everywhere on $Q_i$, then
\[
  \Indicator_{\Omega \cap Q_i} (x)
  = 1 - \Indicator_{\widetilde{b}(x^{(j)}) \leq x_j}
  = \Indicator_{x_j < \widetilde{b}(x^{(j)})}
  = \Indicator_{x_j \leq \widetilde{b}(x^{(j)})}
\]
for \emph{almost} all $x \in Q_i$.
By compactness of $\Omega$ and continuity of $\widetilde{b}$, this implies
$\Indicator_{\Omega \cap Q_i}(x) = \Indicator_{x_j \leq \widetilde{b}(x^{(j)})}$
for \emph{all} $x \in Q_i$.
As above, we thus see $\Indicator_{\Omega \cap Q_i} \in \mathcal{BH}_{\tau \sqrt{d} C} (Q_i)$.
Overall, we have shown as claimed that $\Omega \in \mathcal{BB}_{\tau \sqrt{d} C, M} (\R^d)$.
\end{proof}

\subsection{Equivalence of convergence in \texorpdfstring{$L^1$}{L¹} and in measure on the set of probability densities}
\label{sub:ProbabilityDensitiesDifferentTopologies}

We show here that the topology of convergence in measure coincides with the topology
induced by $\|\cdot\|_{L^1(\measureTheta)}$ on the set $\CalD$ of probability density functions.
Since both topologies are metrizable (see e.g.\ \cite[Theorem~6.7]{KlenkeProbabilityTheory}
for convergence in measure),
it suffices to show that the convergent sequences are the same.
It is well-known (see e.g.\ \cite[Remark~6.11]{KlenkeProbabilityTheory})
that convergence in $L^1$ implies convergence in measure.
Conversely, suppose towards a contradiction that there exists a sequence $(p_n)_{n \in \N} \subset \CalD$
and $p \in \CalD$ satisfying $p_n \to p$, but $\| p_n - p \|_{L^1(\measureTheta)} \not\to 0$.
Passing to a subsequence, we can assume that $\| p_n - p \|_{L^1(\measureTheta)} \geq \eps$
for all $n \in \N$ and some $\eps > 0$.
By passing to a further subsequence, we can assume that $p_n \to p$ almost everywhere;
see e.g.\ \cite[Corollary~6.13]{KlenkeProbabilityTheory}.
Now, \cite[Lemma~1.34]{KallenbergFoundationsOfProbability} shows because of
$\|p_n\|_{L^1(\measureTheta)} = 1 = \|p\|_{L^1(\measureTheta)}$ for all $n$
that $\| p_n - p \|_{L^1(\measureTheta)} \to 0$, in contradiction to
$\| p_n - p \|_{L^1(\measureTheta)} \geq \eps$ for all $n \in \N$.\hfill$\square$

\subsection{The relation between Kullback-Leibler and Hellinger distance}
\label{sub:KLHellingerRelation}

Recall the elementary estimate $1 + x \leq e^x$ for $x \in \R$.
Hence, $y \leq e^{y-1}$ for $y \in \R$.
For $y > 0$, we can take the logarithm to obtain $\ln(y) \leq y - 1$ and hence $- \ln(y) \geq 1 - y$.

Hence,
\begin{align*}
  D(p \parallel q)
  & = 2 \cdot \int - \ln(\sqrt{q} / \sqrt{p}) \cdot p \, d \measureTheta
    \geq 2 \cdot \int p \cdot \big( 1 - \sqrt{q} / \sqrt{p} \big) \, d \measureTheta \\
  & = 2 - 2 \int \sqrt{q} \sqrt{p} \, d \measureTheta ,
\end{align*}
since $\int p \, d \measureTheta = 1$.
Finally, note because of $\int p \, d \measureTheta = 1 = \int q \, d \measureTheta$ that
\begin{align*}
  d_H^2(p,q)
  & = \int (\sqrt{p} - \sqrt{q})^2 \, d \measureTheta
    = \int p + q - 2 \sqrt{p} \sqrt{q} \, d \measureTheta \\
  & = 2 - 2 \int \sqrt{p} \sqrt{q} \, d \measureTheta .
\end{align*}
Overall, this implies that $d_H^2(p,q) \leq D (p \parallel q)$
and in particular that $D(p \parallel q) \geq 0$.

\subsection{Proof of \texorpdfstring{\Cref{thm:TheTheoremThatSavesUs}}{Theorem \ref{thm:TheTheoremThatSavesUs}}}

\begin{proof}
  In this proof, we show how the result stated above can be derived from \cite[Theorem~A.2]{kim2021fast}.
  The first difference between the present setting and the one in \cite{kim2021fast} is that
  we consider binary classification with values in $\{ 0,1 \}$, whereas \cite{kim2021fast}
  consider values in $\{ \pm 1 \}$.
  Therefore, we define $g := 2 h - 1$ and $\CalF_m := \{ 2 f_m^\ast - 1 \colon f_m^\ast \in \CalF_m^\ast \}$.

  Furthermore, with $X \sim U([0,1]^d)$, we let $\mathrm{Pr}$ be the image measure of $(X, g(X))$
  on $[0,1]^d \times \{ \pm 1 \}$; that is, for each Borel set $A \subset [0,1]^d \times \{ \pm 1 \}$,
  we have
  \[
    \mathrm{Pr} (A)
    = \lebesgue \big( \bigl\{ x \in [0,1]^d \colon \bigl(x, g(x)\bigr) \in A \bigr\} \big)
    .
  \]
  It is then straightforward to see for any measurable $f : [0,1]^d \to [-1,1]$ that
  the $\phi$-risk defined in \cite[Column~5]{kim2021fast} as
  \begin{equation}
    \CalE_\phi (f)
    = \EE_{(X,Y) \sim \mathrm{Pr}}
        \bigl[\phi(Y \, f(X))\bigr]
    \quad \text{satisfies} \quad
    \CalE_\phi (f)
    = \CalE_{\phi,\lebesgue,h} (\tfrac{1+f}{2})
    .
    \label{eq:HingeRiskConnection}
  \end{equation}
  Furthermore, directly from the definitions it follows that $\CalE_\phi (g) = 0$,
  so that we can choose $f_\phi^\ast = g$ in the notation of \cite{kim2021fast}.
  Thus, the \emph{excess $\phi$-risk} of a measurable $f : [0,1]^d \to [-1,1]$
  as defined in \cite[Column~5]{kim2021fast} is given by
  $\CalE_\phi (f,f_\phi^\ast) = \CalE_\phi(f) - \CalE_\phi(f_\phi^\ast) = \CalE_\phi(f)$.
  Next, the \emph{Bayes classifier} $C^\ast : [0,1]^d \to \{ \pm 1 \}$ associated to the given
  problem is easily seen to be given by $C^\ast (x) = g(x)$,
  since this classifier has risk
  $\EE_{(X,Y) \sim \mathrm{Pr}} \big[ \Indicator_{C^\ast(X) \neq Y} \big] \!=\! 0$.
  Finally, the \emph{conditional class probability} $\eta : [0,1]^d \!\to\! [0,1]$,
  $\eta(x) = \mathrm{Pr}(Y = 1 \mid X = x)$
  in our setting satisfies $\eta(x) = \Indicator_{\Omega}(x) \in \{ 0,1 \}$.

  We now verify the conditions of \cite[Theorem~A.2]{kim2021fast} in the setting from above:

  \smallskip{}

  \emph{i)} \emph{Condition (N)} from \cite{kim2021fast},
  i.e., the so-called \emph{Tsybakov noise condition} should be satisfied,
  which requires that there exist $C' > 0$ and $q \in [0,\infty]$ satisfying
  \[
    \mathrm{Pr} (\{ (X,Y) \colon |2 \, \eta(X) - 1| \leq t \})
    \leq C' \cdot t^q
    \qquad \forall \, t \in (0,\infty) .
  \]
  In our setting, this is satisfied with $C' = 1$ and $q = \infty$,
  since $\eta(X) \in \{ 0,1 \}$ and hence $|2 \eta(X) - 1| = 1$, so that
  \[
    \mathrm{Pr} (\{ (X,Y) \colon |2 \, \eta(X) - 1| \leq t \})
    = \begin{cases}
        0 = t^\infty,    & \text{if } t < 1, \\
        1 \leq t^\infty, & \text{if } t \geq 1.
      \end{cases}
  \]

  \smallskip{}

  \emph{ii)} \emph{Condition (A2)} in \cite{kim2021fast} is satisfied in our setting, since
  the considerations from above combined with the assumptions of the theorem imply that
  for each $m \in \N$ there exists a function $f_m^\ast \in \CalF_m$ satisfying
  $\CalE_{\phi,\lebesgue,h}(f_m^\ast) \leq a_n$, so that $f_m := 2 \, f_m^\ast - 1 \in \CalF_m$
  satisfies
  \[
    \CalE_\phi (f_m, f_\phi^\ast)
    = \CalE_\phi (f_m)
    = \CalE_{\phi,\lebesgue,h} \bigl(\tfrac{1 + f_m}{2}\bigr)
    = \CalE_{\phi,\lebesgue,h} (f_m^\ast)
    \leq a_m,
  \]
  where also $a_m = \CalO(m^{-a_0})$ with $a_0 > 0$.

  \smallskip{}

  \emph{iii)} \emph{Condition (A3)} in \cite{kim2021fast} is satisfied for $F_n := 1$,
  since $0 \leq f_m^\ast \leq 1$ for all $f_m^\ast \in \CalF_m^\ast$ and hence
  $\| f_m \|_{L^\infty} \leq 1 = F_n$ for all $f_m \in \CalF_m$.

  \smallskip{}

  \emph{iv)} Regarding \emph{condition (A5)} in \cite{kim2021fast}, we first note that
  \cite[Lemma~A.2]{kim2021fast} shows that \emph{condition (A4)} is satisfied with
  $\nu = \frac{q}{q+1} = 1$, since $q = \infty$ (see above).
  Furthermore, it is well-known that the so-called \emph{bracketing entropy}
  (see \cite[Column~18]{kim2021fast} for a definition)
  $H_B(\delta, \mathcal{F}_m, \| \cdot \|_{L^2})$ is bounded by the $L^\infty$ covering entropy
  $V_{\CalF_m, \| \cdot \|_{L^\infty}}(\delta/2)$;
  see for instance \cite[Equation~A.1]{kim2021fast} or \cite[Lemma~2.1]{van2000applications}.
  Therefore, setting $\delta_m := 4 \delta_m^\ast$, we see
  by definition of $\CalF_m = 2 \CalF_m^\ast - 1$ and elementary properties of entropy numbers that
  \begin{align*}
    H_B (\delta_m, \CalF_m, \| \cdot \|_{L^2})
    & \leq V_{\CalF_m, \| \cdot \|_{L^\infty}} (\delta_m / 2)
      \leq V_{\CalF_m^\ast, \| \cdot \|_{L^\infty}} (\delta_m / 4)
      =    V_{\CalF_m^\ast, \| \cdot \|_{L^\infty}} (\delta_m^\ast) \\
    & \leq C \cdot m \cdot \delta_m^\ast
      =    4 C \cdot m \cdot \delta_m
      =    4 C \cdot m \cdot \bigl(\delta_m / F_m\bigr)^{2 - \nu}
    ,
  \end{align*}
  so that \emph{condition (A5)} from \cite{kim2021fast} is satisfied with $c_3 = 4 C$.

  \smallskip{}

  Finally, as in \cite[Theorem~A.2]{kim2021fast}, we let $\eps_m = \sqrt{\max \{ a_m, \delta_m \}}$,
  where we note because of $\delta_m \asymp \delta_m^\ast$ that $\eps_m \asymp \sqrt{\eps_m^\ast}$.
  Hence, recalling that $q = \infty$, we see in view of \Cref{eq:TechnicalCondition} that
  \[
    m \cdot (\eps_m^2 / F_m)^{(q+2)/(q+1)}
    = m \cdot \eps_m^2
    \asymp m \cdot \eps_m^\ast
    \gtrsim m^\iota
    \gtrsim \ln^{2} (m)
    ,
  \]
  as required in \cite[Theorem~A.2]{kim2021fast}.

  \smallskip{}

  Overall, we can thus apply \cite[Theorem~A.2]{kim2021fast}.
  By assumption of the current theorem, $f_S \in \CalF_m^\ast$ satisfies
  \Cref{eq:empPhiRisk} with $\CalH = \CalF_m^\ast$.
  Note that if the training sample $S$ is as in the current theorem,
  then $T := (X, g(X_i))_{i=1}^m$ satisfies $T \IIDsim \mathrm{Pr}$;
  furthermore, the map $S \mapsto T$ is a bijection.
  In view of the obvious generalization of \Cref{eq:HingeRiskConnection}
  to the \emph{empirical} risk, this easily shows that $f_{T,m} := 2 f_S - 1 \in \CalF_m$
  minimizes the empirical $\phi$-risk $\CalE_{\phi,m}$ (with respect to the training sample $T$)
  as defined in \cite[Equation~(2.2)]{kim2021fast}.
  Thus, \cite[Theorem~A.2]{kim2021fast} shows for the true (excess) risk
  \begin{align*}
    \CalE(f_{T,m}, C^\ast)
    & = \mathrm{Pr} \big( \{ (X,Y) \colon Y \neq \sign(f_{T,m}(X)) \} \big)
        - \mathrm{Pr} \big( \{ (X,Y) \colon Y \neq C^\ast(X) \} \big) \\
    & = \mathrm{Pr} \big( \{ (X,Y) \colon Y \neq \sign(f_{T,m}(X)) \} \big) \\
    & = \lebesgue \big( \{ x \in [0,1]^d \colon g(x) \neq \sign(f_{T,m}(x)) \} \big) \\
    & = \lebesgue
        \big(
          \big\{
            x \in [0,1]^d
            \,\,\colon\,\,  2 h(x) - 1  \neq \sign \bigl(2 f_S(x) - 1\bigr)
          \big\}
        \big)
  \end{align*}
  of $f_{T,m}$ that
  \[
    \EE_S
    \big[
      \lebesgue
      \big(
        \big\{
          x \in [0,1]^d
          \,\,\colon\,\,
          \sign\bigl(2 f_S (x) - 1\bigr) \neq 2 h(x) - 1
        \big\}
      \big)
    \big] \\
    = \EE_T [\CalE (f_{T,m}, C^\ast)]
    \lesssim \eps_m^2
    \asymp \eps_m^\ast
    ,
  \]
  where the implied constant is independent of $g$ (and hence of $h$) and only depends
  on the constants appearing in conditions (A2)--(A5) and (N) verified above.
  These constants in turn only depend on those
  appearing in Conditions \ref{enu:ApproximationCondition}--\ref{enu:TechnicalCondition}
  of \Cref{thm:TheTheoremThatSavesUs}.
  We note that it is never clearly stated in \cite{kim2021fast}
  what the interpretation of $\sign(0)$ is.
  However, the proof of \cite[Theorem~A.2]{kim2021fast} relies on
  \cite[Theorem~2.31]{SteinwartSupportVectorMachines}, and the convention in
  \cite{SteinwartSupportVectorMachines} is $\sign(0) = 1$; see immediately
  below \cite[Equation~(2.2)]{SteinwartSupportVectorMachines}.
\end{proof}

\bibliographystyle{plain}
\small
\bibliography{references}

\begin{thebibliography}{10}

\bibitem{AmannEscherAnalysisIII}
H.~Amann and J.~Escher.
\newblock {\em Analysis. {III}}.
\newblock Birkh\"{a}user Verlag, Basel, 2009.

\bibitem{BachBreakingTheCurse}
F.~Bach.
\newblock Breaking the curse of dimensionality with convex neutral networks.
\newblock {\em J. Mach. Learn. Res.}, 18:Paper No. 19, 53, 2017.

\bibitem{BarronNeuralNetApproximation}
A.~R. Barron.
\newblock Neural net approximation.
\newblock In {\em Proc. 7th Yale Workshop on Adaptive and Learning Systems},
  volume~1, pages 69--72, 1992.

\bibitem{barron1993universal}
A.~R. Barron.
\newblock Universal approximation bounds for superpositions of a sigmoidal
  function.
\newblock {\em IEEE Transactions on Information theory}, 39(3):930--945, 1993.

\bibitem{Barron1994approximation}
A.~R Barron.
\newblock Approximation and estimation bounds for artificial neural networks.
\newblock {\em Machine learning}, 14(1):115--133, 1994.

\bibitem{BernerBlackScholesGeneralizationError}
J.~Berner, P.~Grohs, and A.~Jentzen.
\newblock Analysis of the generalization error: Empirical risk minimization
  over deep artificial neural networks overcomes the curse of dimensionality in
  the numerical approximation of {B}lack--{S}choles partial differential
  equations.
\newblock {\em SIAM Journal on Mathematics of Data Science}, 2(3):631--657,
  2020.

\bibitem{berner2021modern}
J.~Berner, P.~Grohs, G.~Kutyniok, and P.~Petersen.
\newblock The modern mathematics of deep learning.
\newblock {\em arXiv preprint arXiv:2105.04026}, 2021.

\bibitem{CandesThesis}
E.~J. Cand\`es.
\newblock {\em Ridgelets: theory and applications}.
\newblock PhD thesis, Department of Statistics, Stanford University, 1998.
\newblock
  \url{https://statweb.stanford.edu/~candes/publications/downloads/Thesis.pdf}.

\bibitem{OurBarronPaper}
A.~Caragea, P.~Petersen, and F.~Voigtlaender.
\newblock Neural network approximation and estimation of classifiers with
  classification boundary in a {B}arron class.
\newblock {\em arXiv preprint
  \href{https://arxiv.org/abs/2011.09363}{arXiv:2011.09363}}, 2020.

\bibitem{CohnMeasureTheory}
D.~L. Cohn.
\newblock {\em Measure theory}.
\newblock Birkh\"{a}user Advanced Texts: Basler Lehrb\"{u}cher. [Birkh\"{a}user
  Advanced Texts: Basel Textbooks]. Birkh\"{a}user/Springer, New York, second
  edition, 2013.

\bibitem{DeVoreTemlyakovNonlinearApproximationTrigonometric}
R.~A. DeVore and V.~N. Temlyakov.
\newblock Nonlinear approximation by trigonometric sums.
\newblock {\em J. Fourier Anal. Appl.}, 2(1):29--48, 1995.

\bibitem{devroye2013probabilistic}
L.~Devroye, L.~Gy{\"o}rfi, and G.~Lugosi.
\newblock {\em A probabilistic theory of pattern recognition}, volume~31.
\newblock Springer Science \& Business Media, 2013.

\bibitem{ma2018priori}
W.~E, C.~Ma, and L.~Wu.
\newblock A priori estimates of the population risk for two-layer neural
  networks.
\newblock {\em Commun. Math. Sci.}, 17(5):1407--1425, 2019.

\bibitem{ma2020towards}
W.~E, C.~Ma, L.~Wu, and S.~Wojtowytsch.
\newblock {Towards a Mathematical Understanding of Neural Network-Based Machine
  Learning: What We Know and What We Don't}.
\newblock {\em CSIAM Transactions on Applied Mathematics}, 1(4):561--615, 2020.

\bibitem{wojtowytsch2020banach}
W.~E and S.~Wojtowytsch.
\newblock On the {B}anach spaces associated with multi-layer {ReLU} networks:
  {F}unction representation, approximation theory and gradient descent
  dynamics.
\newblock {\em arXiv preprint arXiv:2007.15623}, 2020.

\bibitem{wojtowytsch2020representation}
W.~E and S.~Wojtowytsch.
\newblock Representation formulas and pointwise properties for {B}arron
  functions.
\newblock {\em arXiv preprint arXiv:2006.05982}, 2020.

\bibitem{goodfellow2016deep}
I.~Goodfellow, Y.~Bengio, and A.~Courville.
\newblock {\em Deep learning}.
\newblock MIT press, 2016.

\bibitem{GrafakosClassicalFourier}
L.~Grafakos.
\newblock {\em Classical {F}ourier analysis}, volume 249 of {\em Graduate Texts
  in Mathematics}.
\newblock Springer, New York, third edition, 2014.

\bibitem{GrohsOptimallySparse}
P.~Grohs.
\newblock Optimally sparse data representations.
\newblock In {\em Harmonic and applied analysis}, Appl. Numer. Harmon. Anal.,
  pages 199--248. Birkh\"{a}user/Springer, Cham, 2015.

\bibitem{GrohsVoigtlaenderTheoryToPracticeGap}
P.~Grohs and F.~Voigtlaender.
\newblock Proof of the theory-to-practice gap in deep learning via sampling
  complexity bounds for neural network approximation spaces.
\newblock {\em arXiv preprint arXiv:2104.02746}, 2021.

\bibitem{imaizumi2019deep}
M.~Imaizumi and K.~Fukumizu.
\newblock Deep neural networks learn non-smooth functions effectively.
\newblock In {\em The 22nd International Conference on Artificial Intelligence
  and Statistics}, pages 869--878. PMLR, 2019.

\bibitem{ImaizumiEstimatingFunctionsWithSingularitiesOnCurves}
M.~Imaizumi and K.~Fukumizu.
\newblock Advantage of deep neural networks for estimating functions with
  singularity on curves.
\newblock {\em arXiv preprint
  \href{https://arxiv.org/abs/2011.02256}{arXiv:2011.02256}}, 2020.

\bibitem{KallenbergFoundationsOfProbability}
O.~Kallenberg.
\newblock {\em Foundations of modern probability}, volume~99 of {\em
  Probability Theory and Stochastic Modelling}.
\newblock Springer, Cham, 2021.

\bibitem{kim2021fast}
Y.~Kim, I.~Ohn, and D.~Kim.
\newblock Fast convergence rates of deep neural networks for classification.
\newblock {\em Neural Networks}, 138:179--197, 2021.

\bibitem{KlenkeProbabilityTheory}
A.~Klenke.
\newblock {\em Probability theory}.
\newblock Universitext. Springer, London, second edition, 2014.
\newblock A comprehensive course.

\bibitem{KlusowskiBarronRiskBoundsRidgeFunctions}
J.~M. Klusowski and A.~R. Barron.
\newblock Risk bounds for high-dimensional ridge function combinations
  including neural networks.
\newblock {\em arXiv preprint arXiv:1607.01434}, 2016.

\bibitem{KlusowskiBarronApproximationWithEll1Controls}
J.~M. Klusowski and A.~R. Barron.
\newblock Approximation by combinations of {R}e{LU} and squared {R}e{LU} ridge
  functions with {$\ell^1$} and {$\ell^0$} controls.
\newblock {\em IEEE Trans. Inform. Theory}, 64(12):7649--7656, 2018.

\bibitem{KriegRandomPointsForApproximatingSobolevFunctions}
D.~Krieg and M.~Sonnleitner.
\newblock Random points are optimal for the approximation of {S}obolev
  functions.
\newblock {\em arXiv preprint
  \href{https://arxiv.org/abs/2009.11275}{arXiv:2009.11275}}, 2020.

\bibitem{MakovozUniformApproximationByNN}
Y.~Makovoz.
\newblock Uniform approximation by neural networks.
\newblock {\em Journal of Approximation Theory}, 95(2):215--228, 1998.

\bibitem{mammen1999smooth}
E.~Mammen and A.~B. Tsybakov.
\newblock Smooth discrimination analysis.
\newblock {\em The Annals of Statistics}, 27(6):1808--1829, 1999.

\bibitem{mohri2018foundations}
M.~Mohri, A.~Rostamizadeh, and A.~Talwalkar.
\newblock {\em Foundations of machine learning}.
\newblock MIT press, 2018.

\bibitem{MuscaluSchlagHarmonicAnalysisI}
C.~Muscalu and W.~Schlag.
\newblock {\em Classical and multilinear harmonic analysis. {V}ol. {I}}, volume
  137 of {\em Cambridge Studies in Advanced Mathematics}.
\newblock Cambridge University Press, Cambridge, 2013.

\bibitem{petersen2018optimal}
P.~Petersen and F.~Voigtlaender.
\newblock Optimal approximation of piecewise smooth functions using deep {ReLU}
  neural networks.
\newblock {\em Neural Networks}, 108:296--330, 2018.

\bibitem{RudinRealComplex}
W.~Rudin.
\newblock {\em Real and complex analysis}.
\newblock McGraw-Hill Book Co., New York, third edition, 1987.

\bibitem{schmidhuber2015deep}
J.~Schmidhuber.
\newblock Deep learning in neural networks: An overview.
\newblock {\em Neural networks}, 61:85--117, 2015.

\bibitem{schmidt2020nonparametric}
J.~Schmidt-Hieber.
\newblock Nonparametric regression using deep neural networks with {ReLU}
  activation function.
\newblock {\em The Annals of Statistics}, 48(4):1875--1897, 2020.

\bibitem{UnderstandingMachineLearning}
S.~Shalev-Shwartz and S.~Ben-David.
\newblock {\em Understanding machine learning: From theory to algorithms}.
\newblock Cambridge university press, 2014.

\bibitem{SiegelSharpBoundsOnMetricEntropyOfShallowNN}
J.~W Siegel and J.~Xu.
\newblock Sharp bounds on the approximation rates, metric entropy, and
  $n$-widths of shallow neural networks.
\newblock {\em arXiv preprint arXiv:2101.12365}, 2021.

\bibitem{SteinwartSupportVectorMachines}
I.~Steinwart and A.~Christmann.
\newblock {\em Support {V}ector {M}achines}.
\newblock Information Science and Statistics. Springer, New York, 2008.

\bibitem{StoneOptimalGlobalRatesOfConvergenceNonparametricRegression}
C.~J. Stone.
\newblock Optimal global rates of convergence for nonparametric regression.
\newblock {\em Ann. Statist.}, 10(4):1040--1053, 1982.

\bibitem{tsybakov2004optimal}
A.~B. Tsybakov.
\newblock Optimal aggregation of classifiers in statistical learning.
\newblock {\em The Annals of Statistics}, 32(1):135--166, 2004.

\bibitem{van2000applications}
S.~A. Van~de Geer.
\newblock {\em Applications of empirical process theory}, volume~91.
\newblock Cambridge University Press Cambridge, 2000.

\bibitem{VershyninHighDimensionalProbability}
R.~Vershynin.
\newblock {\em High-dimensional probability}, volume~47 of {\em Cambridge
  Series in Statistical and Probabilistic Mathematics}.
\newblock Cambridge University Press, Cambridge, 2018.

\bibitem{YangBarronMinimaxRates}
Y.~Yang and A.~Barron.
\newblock Information-theoretic determination of minimax rates of convergence.
\newblock {\em Ann. Statist.}, 27(5):1564--1599, 1999.

\end{thebibliography}

\end{document}